\pgfplotsset{compat=1.16} 
\definecolor{red}{rgb}{0.7,0.15,0.15}
\definecolor{green}{rgb}{0,0.5,0}
\definecolor{blue}{rgb}{0,0,0.7}
\makeatletter \@addtoreset{equation}{section}
\newtheorem{theorem}{Theorem}[section]
\newtheorem{assumption}[theorem]{Assumption}
\newtheorem{corollary}[theorem]{Corollary}
\newtheorem{lemma}[theorem]{Lemma}
\newtheorem{proposition}[theorem]{Proposition}
\newtheorem{definition}[theorem]{Definition}
\newtheorem{remark}[theorem]{Remark}
\def \E{\mathbb{E}}
\def \F{\mathbb{F}}
\def \H{\mathbb{H}}
\def \L{\mathbb{L}}
\def \N{\mathbb{N}}
\def \P{\mathbb{P}}
\def \R{\mathbb{R}}
\def \V{\mathbb{V}}
\def\Ac{{\cal A}}
\def\Bc{{\cal B}}
\def\Cc{{\cal C}}
\def\Ec{{\cal E}}
\def\Fc{{\cal F}}
\def\Hc{{\cal H}}
\def\Mc{{\cal M}}
\def\Rc{{\cal R}}
\def\Tc{{\cal T}}
\def\Uc{{\cal U}}
\def\Vc{{\cal V}}
\DeclareMathOperator*{\argmax}{arg\,max}
\DeclareMathOperator*{\esssup}{ess\,sup}
\title{Principal-Multiagents problem under equivalent changes of measure: general study and an existence result}
\author{Nicol\'as {\sc Hern\'andez-Santib\'a\~{n}ez} \footnote{CMM, Universidad de Chile, nhernandez@dim.uchile.cl.} }
        \date{\today}
\begin{document}

\maketitle

\begin{abstract}
We study a general contracting problem between the principal and a finite set of competitive agents, who perform equivalent changes of measure by controlling the drift of the output process and the compensator of its associated jump measure. In this setting, we generalize the dynamic programming approach developed by \citeauthor*{cvitanic2018dynamic} \cite{cvitanic2018dynamic} and we also relax their assumptions. We prove that the problem of the principal can be reformulated as a standard stochastic control problem in which she controls the continuation utility (or certainty equivalent) processes of the agents. Our assumptions and conditions on the admissible contracts are minimal to make our approach work. We review part of the literature and give examples on how they are usually satisfied. We also present a smoothness result for the value function of a risk--neutral principal when the agents have exponential utility functions. This leads, under some additional assumptions, to the existence of an optimal contract.

\vspace{5mm}

\noindent{\bf Key words:} moral hazard, Principal-Multiagents, dynamic programming approach, BSDEs with jumps, PIDEs, regularity of the value function
\vspace{5mm}

\noindent{\bf AMS 2020 subject classifications:} 91B41, 91B43, 93E20, 49L25

\end{abstract}

\section{Introduction}
The Principal-Agent problem was originally introduced in the 1970s and studied among others by \citeauthor*{mirrlees1976optimal} \cite{mirrlees1976optimal,mirrlees1999theory}, \citeauthor*{holmstrom1979moral} \cite{holmstrom1979moral}, \citeauthor*{shavell1979risk} \cite{shavell1979risk}, \citeauthor*{grossman1983analysis} \cite{grossman1983analysis}, \citeauthor*{rogerson1985first} \cite{rogerson1985first} and \citeauthor*{jewitt1988justifying} \cite{jewitt1988justifying}. In the last decade it has regained part of the importance it had in financial mathematics, mainly since the seminal paper by \citeauthor*{sannikov2008continuous} \cite{sannikov2008continuous} which proposed a new approach for the continuous--time version of the problem, that had been proposed long ago by \citeauthor*{holmstrom1987aggregation} \cite{holmstrom1987aggregation}.

\medskip
The dynamic programming approach, motivated by \citeauthor*{sannikov2008continuous} \cite{sannikov2008continuous} and formalized rigorously  by \citeauthor*{cvitanic2018dynamic} \cite{cvitanic2018dynamic}, gives the recipe to tackle the problem of the principal, which corresponds to a non-standard stochastic control problem as it involves the choice of a terminal payment 
in the form of a general $\Fc_T$-measurable random variable. The trick is to use the continuation utility of the agent as an extra state variable for the problem of the principal, who can then reformulate her control problem by moving, without loss of generality, from the class of terminal payments to the class of terminal values of a controlled process. The resulting stochastic control problem is standard and can be associated to a Hamilton-Jacobi-Bellman (HJB) equation. Unfortunately, the assumptions in \cite{cvitanic2018dynamic} are very strong so the practical use of the paper is mainly to show the way to achieve such reformulation of the principal's problem. Many recent works in the literature cannot apply directly the main result in \cite{cvitanic2018dynamic} and have to mimic the proof by adjusting it to their own modelling assumptions. This is the case, for instance, of \citeauthor*{euch2021optimal} \cite{euch2021optimal}, \citeauthor*{baldacci2021optimal} \cite{baldacci2021optimal} or \citeauthor*{bensalem2020continuous} \cite{bensalem2020continuous}.

\medskip
The first contribution of the paper is to extend the main result of \cite{cvitanic2018dynamic}, in the context of equivalent changes of measures, by both considering a general setting and reducing the modelling assumptions. For the first one, we consider a principal who is signing a set of $N$ agents, each one of them is in charge of controlling an output process with jumps. For the second one, we propose a set of minimal assumptions for which our proof of the reformulation result holds. We review part of the literature on applications of the Principal-Agent model and we discuss how our assumptions are satisfied in those works.

\medskip
The study of the contracting problem with finitely many agents initiated in the 1980s with the works of \citeauthor*{holmstrom1982moral} \cite{holmstrom1982moral}, \citeauthor*{mookherjee1984optimal} \cite{mookherjee1984optimal}, \citeauthor*{green1983comparison} \cite{green1983comparison} and \citeauthor*{demski1984optimal} \cite{demski1984optimal} among others. Recently, \citeauthor*{elie2019contracting} \cite{elie2019contracting} tackled the continuous-time version of the Principal-Multiagent problem. Other works in continuous-time include the hierarchical model by \citeauthor*{hubert2020continuous} \cite{hubert2020continuous} and the problem with jumps by \citeauthor*{baldacci2021optimal} \cite{baldacci2021optimal}.

\medskip
The second contribution of the paper is to address the question of existence of the optimal contracts, by studying the reformulated problem of the principal, its associated HJB equation and the smoothness of its solution. In general, the stochastic problem of the principal is degenerated because she has the freedom to control the volatility of the continuation utility of the agent (a state variable of the problem). This results in a problem with a non-elliptic diffusion matrix and the HJB equation takes the form of a second-order fully non-linear parabolic PDE, for which the regularity results are not numerous. Without such regularity, the conclusions of the dynamic programming approach are incomplete. As remarked in \cite{cvitanic2018dynamic}, the optimal controls for the reformulated problem of the principal correspond to the partial derivatives of the value function $v$ and therefore the optimal contract (for the original problem) proposed by their approach needs at least some weak regularity of $v$ to be well-defined.

\medskip
In the setting of exponential utility of the agents, we outline a proof of the smoothness of the value function of a risk-neutral principal by studying the viscosity solutions of the HJB equation associated to her problem. 
As shown in the paper, in this specific case we can simplify the HJB equation considerably by reducing the number of state variables for the reformulated problem of the principal. The main idea is to use the certainty equivalent processes of the agents as state variables, whose dynamics lead to a straightforward optimization for the risk-neutral principal. Once this step is done, we characterize the value function $v$ as the unique viscosity solution of the simplified HJB equation, which also satisfies the properties to be associated to a forward-backwards SDE system (FBSDE). The smoothness of such system leads to the smoothness of its solution which corresponds again to the value function of the principal. 

\medskip
Under some additional assumptions, we can prove the existence of an optimal contract for the problem of the principal. The existence of an optimal contract is not an easy problem and there are few results in the literature dealing with it. Even in the one-dimensional setting of \citeauthor*{sannikov2008continuous} \cite{sannikov2008continuous}, the regularity of the value function is very hard to prove and it has been recently pointed out by \citeauthor*{possamai2020there} \cite{possamai2020there} that the extensive proof presented in \cite{sannikov2008continuous} has some gaps. Some works that follow (directly or indirectly) the dynamic programming approach and in which the existence of an optimal contract is proved include \citeauthor*{biais2010large} \cite{biais2010large}, \citeauthor*{pages2014mathematical} \cite{pages2014mathematical} and \citeauthor*{hernandez2021pollution} \cite{hernandez2021pollution}. Some existence results for the static Principal-Agent problem can be found in \citeauthor*{carlier2005existence} \cite{carlier2005existence} and \citeauthor*{moroni2014existence} \cite{moroni2014existence}, whose focus is placed on the monotone likelihood ratio condition (MLRC) and its consequences.

\medskip
The paper is structured as follows. The model is described in Section \ref{sec:model} as well as the game played by the agents and the problem of the principal. In Section \ref{sec:agent} we study the game played by the agents and present the reformulation of the problem of the principal. We provide the simpler results for the problem with a single agent in Section \ref{sec:single-agent}. In section \ref{sec:examples} we discuss how our result compares to the current literature. Finally, on Section \ref{sec:principal} we tackle the problem of existence of an optimal contract in the case of a risk-neutral principal and agents with exponential utility.

\paragraph*{Notations:} 
We let $\N$ be the set of integers, $\N^\star$ the set of positive integers and $\R_+$ the set of non-negative real numbers. For $d,n\in\N^\star$, $\mathcal{M}^{d,n}$ denotes the set of matrices with real entries, $d$ rows and $n$ columns. For $A\in\Mc^{d,n}$, $A^{i,:}$ denotes the $i-$th row of $A$. 
For any $n\in\N^\star$, $i\in\{1,\dots,n\}$ and any vector $v\in\R^n$, we denote by $v^i$ the $i$-th coordinate of $v$ and by $v^{-i}$ the vector obtained by suppressing the $i$-th coordinate of $v$. For $u\in\R$, we denote by $u\otimes_i v$ the vector $w\in\R^{n+1}$ whose $i$-th coordinate is equal to $u$ and such that $w^{-i}=v$. We extend this operation to $u\in\R^\ell$, in which case $u\otimes_i v$ is the vector $w\in\R^{n+\ell}$ whose coordinates $i$-th to $(i+\ell)$-th are equal to $u$ and their removal results in $v$. We use the same notation for stochastic processes. $^\top$ denotes the transpose operation in $\R^N$.  For a function $v(t,x)$ we denote by $v_t$ its time partial derivative and $Dv$ its spatial gradient. For $k,\ell\in\N^\star$ and Euclidean spaces $\E_1$, $\E_2$, we denote by $C_b^{k,\ell}([0,T]\times\E_1;\E_2)$ the space of all functions $\varphi:[0,T]\times\E_1 \rightarrow \E_2$ which are $k$ times continuously differentiable in the first variable, $\ell$ times continuously differentiable in the second variable and the partial derivatives are uniformly bounded. We use the analogous notation for the set $C_b^{k}(\E_1;\E_2)$. We denote by $\Bc(\E_1;\E_2)$ the space of borel functions from $\E_1$ to $\E_2$. For a topological set $E$, we denote by $\Bc(E)$ the Borel sigma-algebra. For a stochastic process $X$, we denote the corresponding counting measure by $\mu_X$.

\section{The model} \label{sec:model}

We study the contracting problem of a principal who wants to hire $N$ agents, each one of them to manage a different outcome process $X^i$ with $i\in I:=\{1,\dots, N\}$. The contractual relationships between the principal and the agents are not independent since the action of each agent impacts the outcomes of the others. Therefore, there is competition between the agents. We study the problem in which the agents look for a Nash equilibrium of the non-cooperative game in which they are involved, while the principal plays a Stackelberg game against the whole set of agents.

\subsection{The setting} 
Let $(\Omega,\Fc,\P)$ be a probability space and $T>0$, so we consider a finite horizon. Every agent $i\in I$ is in charge of controlling a $d-$dimensional outcome process denoted $X^i$.  We let $W$ be an independent $n-$dimensional $\P-$Brownian motion. In the standard principal-agent problem, with $N=1$, the process $W$ is the source of randomness and unobservability of the actions of the agent, which leads to a contracting problem with moral hazard. In our framework, moral hazard is more complex since each process $X^i$ is affected by the Brownian motion as well as the actions of all the agents. We also include jumps in the dynamics of the outcome processes.

\medskip
For every $i\in I$ we let $\sigma^{i}: \Omega \times [0,T]\times \R^{dN} \longrightarrow \mathcal{M}^{d,n}$ be the covariance matrix for player $i$. Next, let $J^i$ be an independent multivariate point process with values in a Blackwell space $(E,\zeta)$. We denote by $\mu_{J^i}$ the counting measure on $([0,T]\times E,\Bc([0,T])\otimes\zeta)$ associated to each $J^i$ and its compensator under $\P$ is given by $F^i(\textrm{d}e)\textrm{d}t$. Here, $F^i$ is a positive $\sigma-$finite measure on $(E,\zeta)$ satisfying
$
\int_{E\setminus\{0\}} (1 \wedge |e|^2) F^i(\textrm{d}e).
$ 
Consider the coefficient $\beta^i:[0,T]\times\R^{dN}\times E\rightarrow\R^d$, which represents the sensitivity of $X^i$ on the jumps of $J^i$. Then, the dynamics of the process $X^i$, with $i\in I$, is given by
\begin{equation}\label{eq:dynamics-xi}
X^i_t = X_0^i +   \int_0^t  \sigma_s^{i}(X^1_s,\dots,X^N_s) \mathrm{d}W_s +   \int_0^t \int_{E\setminus\{0\}} \beta^i_s(X^1_s,\dots,X^N_s,e) \mu_{J^i}(\mathrm{d}s,\mathrm{d}e) , \quad t\in[0,T],
\end{equation}
where $X_0^i\in\R^d$ is given. We can write the dynamics of the output processes in vectorial form by defining $X:=((X^1)^\top,\dots,(X^N)^\top)^\top\in \R^{dN}$ and the matrix by blocks $\Sigma_s(X)\in \mathcal{M}^{dN,n}$, where for every $i\in I$ the row-block $i$ of $\Sigma_s(\cdot)$ corresponds to $\sigma_s^{i}(\cdot)$. We define similarly the $\R^N-$valued measure $\vec{\mu}_J:=(\mu_{J^1},\dots,\mu_{J^N})^\top$ and the function $\beta:[0,T]\times\R^{dN}\times E\rightarrow\Mc^{dN,N}$ by
\[
\beta_t(y,e):= 
\begin{pmatrix}
\beta_t^1(y,e) & 0 & \dots & 0 \\
0 &  \beta_t^2(y,e) & \dots & 0 \\
\vdots & \vdots & & \vdots \\
0 & 0 & \dots & \beta_t^N(y,e)
\end{pmatrix}
, \quad \forall t\in[0,t], ~y\in\R^{dN},~e\in E.
\]
Then we write equivalently the system of SDEs for the output processes as
\begin{equation}\label{eq:dynamics-vectorial}
X_t = X_0 + \int_0^t \Sigma_s(X_s) \mathrm{d}W_s + \int_0^t\int_{E\setminus\{0\}} \beta_s(X_s,e) \vec{\mu}_{J}(\mathrm{d}s,\mathrm{d}e), \quad t\in[0,T].
\end{equation}
We assume that for every $x\in \R^{dN}$ the process $\Sigma(\cdot,x)$ is $\F^W-$ predictable and both $\Sigma$ and $\beta$ are such that 
the previous SDE has a unique strong solution, adapted to $\F^{W,J}$ the filtration generated by $W$ and $J^1,\dots,J^N$. We define the continuous martingale part of the output process by $X_t^c:=X_0 + \int_0^t \Sigma_s(X_s) \mathrm{d}W_s$. 

\medskip
For every $i\in I$, define the Borel transition kernel $\eta^{i}$ from $[0,T]\times\R^{dN}$ into $\R^d$, satisfying $\eta^i_t(y,\{0\})=0$, as follows 
\[
\eta^{i}_t(y,B) := \int_{E} {\bf 1}_{\beta^i_t(y,e)\in B\setminus\{0\}} F^i(\textrm{d}e), \quad \text{for any } B\in\Bc(\R^d), ~y\in\R^{dN}.
\]
Then, the compensator of each counting measure $\mu_{X^i}$ under $\P$ is given by (see for instance \cite[Theorem III.2.26]{jacod2003limit}) 
\[
\nu_t^i(\textrm{d}x)\textrm{d}t := \eta_t^i(X_t,\textrm{d}	x)\textrm{d}t.
\]

\begin{remark}
For technical reasons, we have assumed that two outcome processes controlled by different agents jump at the same time with probability zero. One could think of a more general model including multiple jump processes in each one of the dynamics as follows
\begin{equation}\label{eq:dynamics-xi-old}
X^i_t = X_0^i +   \int_0^t  \sigma_s^{i}(X^1_s,\dots,X^N_s) \mathrm{d}W_s +  \sum_{k=1}^{L_i}  \int_0^t \int_{E\setminus\{0\}} \beta^{i,k}_s(X^1_s,\dots,X^N_s,e) \mu_{J^{i,k}}(\mathrm{d}s,\mathrm{d}e) , \quad t\in[0,T],
\end{equation}
as long as none of the jump measures impacts more than one outcome. This is a much more complicated problem and, to the best of our knowledge, does not have applications in the literature other than when each $J^{i,k}$ is a point process. In some cases, the general dynamics \eqref{eq:dynamics-xi-old} can be rewritten in our form \eqref{eq:dynamics-xi}  by aggregating the measures $\mu_{J^{i,1}},\dots,\mu_{J^{i,L_i}}$ into a new measure $\mu^i$ on the space $E^{L_i}$. The main point here is whether or not the compensator of $\mu^i$ has the form $F^i(\emph{d}e)\emph{d}t$. This is indeed the case when each $J^{i,k}$ is a point process and in such setting the formulation with multiple jumps per outcome is equivalent to our formulation.
\end{remark}

Let $\F^X$ be the filtration generated by $(X^1,\dots,X^N)$ and $\F=(\Fc_t)_{t\geq 0}$ be the completion of $\F^X$ under $\P$. Let us introduce the following spaces
\begin{align*}
 \H^{2,1\times dN}_\text{loc}(X) & := \bigg\{  \Mc^{1,dN}\text{-valued, }\F\text{-predictable processes } Z: \int_0^T |Z_s\Sigma_s(X_s)|^2 ~ \mathrm{d}s <\infty, ~ \P-\text{a.s.}   \bigg\}, \\
 \L^{1}_{\text{loc}}(\mu_{X^i})  &:= \bigg\{    \R\text{-valued, }\F\text{-predictable function } H:  \sum_{s\leq \cdot}  \bigg(\int_{\R^d\setminus\{0\}} H_s(x)\mu_{X^i}(\mathrm{d}t,\mathrm{d}x) \bigg)^\frac{1}{2} \text{ is }\P\text{-loc. int.} \bigg\}, ~ \text{for } i\in I.
\end{align*}

We assume that $X$ satisfies the following martingale representation property by components.

\begin{assumption}\label{ass:MRP-coordinates}
For every $(\F,\P)-$martingale $M$ there exists $Z\in\H_{loc}^{2,1\times dN}(X)$ and $\{H^i\}_{i=1}^N$, with $H^i\in\L_{loc}^1(\mu_{X^i})$, such that it holds $\P-$a.s.
\[
\emph{d}M_t = Z_t \emph{d}X_t^c + \sum_{i=1}^N \int_{\R^d\setminus\{0\}} H_t^i(x) \big( \mu_{X^i}(\emph{d}t,\emph{d}x)-\nu_t^i(\emph{d}x)\emph{d}t\big), \quad t\in[0,T].
\]
\end{assumption}

\begin{remark}
The martingale representation property is a standard assumption for the Principal-Agent model and, to the best of our knowledge, does not rule out any application of the model in the literature. However, the representation of martingales takes the following form
\[
\emph{d}M_t = Z_t \emph{d}X_t^c + \int_{\R^{dN}\setminus\{0\}} H_t(x) \big( \mu_{X}(\emph{d}t,\emph{d}x)-\nu_t(\emph{d}x)\emph{d}t\big),  \quad t\in[0,T],
\]
where $\nu_t$ is the compensator of $\mu_X$. Therefore, our main assumption is that the representation can be done directly with respect to the measures $\mu_{X^1},\dots,\mu_{X^N}$. The representation with respect to a family of measures is not a trivial question, as discussed by \citeauthor*{jacod1977general} in \cite[Remark 2]{jacod1977general}. It is proved by \citeauthor*{euch2021optimal} \cite{euch2021optimal} that it holds true if every $J_i$ is a point process. If $X$ is a pure-jump process, the case with a single jump measure is proved by \citeauthor*{jacod2003limit} \cite[Theorem III.1.26]{jacod2003limit} and  \cite[Theorem III.4.29]{jacod2003limit}, whereas the general case was tackled recently by \citeauthor*{calzolari2021martingale} \cite{calzolari2021martingale}. In Appendix \ref{app:MRP-coordinates}, we show how to obtain in our setting the representation with respect to all the components of $X$ when neither of the explosion times are finite.
\end{remark}

\subsection{The weak formulation} \label{sec-weak-formulation}

As usual in contract theory, we assume that the actions of the agents affect the distributions of the output processes. In our competitive setting each agent affects all the outputs. We have thus a weak formulation of the problem that we present next.

\medskip
For every $i\in I$, let $A_i$ be the finite dimensional set of actions of agent $i$. We write $A=\prod_{i=1}^N A_i$ and $A^{-i}=\prod_{j=1, j\neq i}^N A_j$. We consider the coefficient functions $b:\Omega \times [0,T]\times \R^{dN} \times A \longrightarrow\R^n$, 
$\lambda^i:\Omega \times [0,T]\times \R^{dN} \times A \times E \longrightarrow(0,+\infty)$, such that for every $(y,a,e)\in \R^{dN} \times A\times E$ the processes $b(\cdot,y,a)$ and $\lambda^i(\cdot,y,a,e)$ are $\F-$predictable and $\lambda^i(\cdot,y,a,\cdot)$ is locally integrable over $[0,T]\times E$. The set of admissible actions of the agents is defined as follows.

\begin{definition} \label{def:admissible-controls}
We say that an $A-$valued, $\F-$predictable process $\alpha$ is an admissible joint action, if the following process is a $\P-$martingale 
\[
M_t^\alpha: = \Ec\left( \int_0^t b_s(X_s,\alpha_s) \cdot \mathrm{d}W_s + \sum_{i=1}^N \int_0^t \int_{E\setminus\{0\}} \big( \lambda_s^i(X_s,\alpha_s,e) - 1 \big) \big( \mu_{J^i}(\mathrm{d}s,\mathrm{d}e) - F^i(\mathrm{d}e)\mathrm{d}s \big)  \right).
\]
We define $\Ac$ as the set of admissible joint actions.
\end{definition}
Note that $M^\alpha$ is a positive $\P$-martingale.\footnote{From \cite[Remark 15.3.1.]{cohen2015stochastic}, if we write $M_t^\alpha=\Ec(\tilde M_t^\alpha)$, we have that $M^\alpha$ is positive if and only if $\Delta \tilde M^\alpha >-1$, which is the case since each $\lambda^k$ is positive.} We define the probability measure $\P^\alpha$ through $
\frac{d\P^\alpha}{d\P} \big |_{\Fc_T} = M_T^\alpha .
$
It follows from Girsanov's theorem that the process $W^\alpha:= W - \int_0^\cdot b_s(X_s,\alpha_s) \mathrm{d}s$ is a $\P^\alpha-$Brownian motion and $X$ has the dynamics
\begin{equation}\label{eq:dynamics-vectorial-alpha}
X_t = X_0 + \int_0^t \Sigma_s(X_s)b_s(X_s,\alpha_s) \mathrm{d}s + \int_0^t \Sigma_s(X_s) \mathrm{d}W_s^\alpha +  \int_0^t\int_{E\setminus\{0\}} \beta_s(X_s,e) \vec{\mu}_{J}(\textrm{d}s,\textrm{d}e) , \quad t\in[0,T].
\end{equation}

We will denote by $X_t^{c,\alpha}:= \int_0^t \Sigma_s(X_s) \mathrm{d}W_s^\alpha$ the continuous martingale part of $X$ under $\P^\alpha$. Note that, from \cite[Theorem 15.3.10]{cohen2015stochastic}, the compensator of each counting measure $\mu_{X^i}$ under $\P^\alpha$ is given by 
\[
\nu_t^{i,\alpha}(\textrm{d}x)\textrm{d}t := \eta_t^{i,\alpha}(X_t,\textrm{d}	x)\textrm{d}t, 
\]
where
\[
 \eta^{i,\alpha}_t(y,B) = \int_{E} {\bf 1}_{ \beta^i_t(y,e)\in B\setminus\{0\}} \lambda_t^i(y,\alpha_t,e) F^i(\textrm{d}e), \quad \text{for any } B\in\Bc(\R^d), ~y\in\R^{dN}.
\]
We define the vector of measures $\vec{\mu}_X=(\mu_{X^1},\dots,\mu_{X^N})^\top$ and for $\alpha\in\Ac$, we let $\vec{\nu}_t^\alpha=(\nu_t^{1,\alpha},\dots,\nu_t^{N,\alpha})^\top$ be the transition kernel of its compensator under the measure $\P^\alpha$.
\begin{remark}
As can be seen from equation \eqref{eq:dynamics-vectorial-alpha}, the drift of the process $X$ under $\P^\alpha$ belongs to the range of the matrix $\Sigma$ which restricts the modeling choices of the problem when $\Sigma$ is degenerate. Loosely speaking, if the matrix $\Sigma\Sigma^\top$ is invertible, then the range of $\Sigma$ is the whole space $\R^{dN}$ and there is no restriction at all. However, for the sake of generality, we do not impose this assumption and we deal with an eventual degenerate matrix $\Sigma$.
\end{remark}

\begin{remark} \label{rem:lambda-independent}
Note that if the function $\lambda^i$ is independent of $e$, then we have  $\eta^{i,\alpha}_t(y,B) = \lambda_t^i(y,\alpha_t) \eta_t^i(y,B)$. This is the usual setting in the literature, in which the agents are impacting the compensator of the measures in a multiplicative way.
\end{remark}

\subsection{The game between the agents}

Each agent is hired by the principal to control the distribution of the output process he is in charge of, and his actions have an impact on the projects of the other agents. The agents are competitive and therefore they play an $N-$player differential game for which we assume they look for a Nash equilibrium.

\medskip
Fix an arbitrary agent $i\in I$. His actions are costly subject to the function $c^i:\Omega\times [0,T] \times \R^{dN} \times A \longrightarrow\R_+$, which is 
such that $c^i(\cdot,x,a)$ is $\F-$predictable for every $(x,a)\in \R^{dN} \times A$. For his work the agent receives a terminal remuneration $\xi^i \in\Rc$ and continuous payments $\chi^i=(\chi^i_t)_{t\in[0,T]}\in\Xi$, which belong to the sets
\[
\Rc := \{ \xi^i : \R\text{-valued, } \Fc_T\text{-measurable r.v.} \}, \quad \Xi := \{ \chi^i: \R_+\text{-valued, } \F\text{-predictable process} \}.
\]
A pair $(\xi^i,\chi^i)\in\Rc\times\Xi$ is referred to as a contract. 
When there is no confusion we also call contracts the elements $(\xi,\chi)\in\Rc^N\times\Xi^N$ which are offered to the whole group of agents. In the next subsection we will impose some integrability conditions, that we omit for now, to define the set of admissible contracts. 

\medskip
The agent $i$ has a quasi-separable utility function, as shown below, where the function $\Uc_A^i:\R \longrightarrow\R$ is continuous, increasing and $u_A^i:\R^N \longrightarrow\R$ is measurable
. He discounts the future at instantaneous rate $\rho^i:\Omega\times [0,T] \times \R^{dN} \times\R^N \times A \longrightarrow\R$, which is a continuous map such that $\rho^i(\cdot,x,k,a)$ is $\F-$predictable for every $(x,k,a)\in \R^{dN}\times\R^N \times A$. 

\medskip
Given a contract $(\xi,\chi)\in\Rc^N\times\Xi^N$ offered by the principal to the agents, if they perform the joint action $\alpha\in\Ac$, the utility obtained by agent $i$ is given by 
\[
U_0^i(\alpha^i,\alpha^{-i},\xi^i,\chi^i):=  \E^{\P^\alpha} \left[ e^{-\int_0^T \rho^i_s(X_s,\chi_s,\alpha_s) \mathrm{d}s} ~ \Uc_A^i (\xi^i) + \int_0^T e^{-\int_0^s \rho^i_u(X_u,\chi_u,\alpha_u) \mathrm{d}u} \big( u_A^i(\chi_s) - c^i_s(X_s,\alpha_s) \big) \mathrm{d}s  \right].
\]
The best response of agent $i$ to the actions $\alpha^{-i}$ of the others is obtained by solving the following problem
\begin{equation} \label{eq:agent-problem}
V_0^i(\alpha^{-i},\xi^i,\chi^i):= \sup_{\alpha^i\in\Ac^i(\alpha^{-i})} \E^{\P^\alpha} \left[ e^{-\int_0^T \rho^i_s(X_s,\chi_s,\alpha_s) \mathrm{d}s} ~ \Uc_A^i (\xi^i) + \int_0^T e^{-\int_0^s \rho^i_u(X_u,\chi_u,\alpha_u) \mathrm{d}u} \big( u_A^i(\chi_s) - c^i_s(X_s,\alpha_s) \big) \mathrm{d}s  \right],
\end{equation}
where the set of controls for player $i$ is 
\[
\Ac^i(\alpha^{-i}) : = \big\{ \alpha^i : A_i\text{-valued, } \F\text{-predictable process}, \text{ such that } \alpha^i \otimes_i \alpha^{-i} \in \Ac \big\}.
\]

\begin{remark}
Note that the instantaneous utility and cost functions, as well as the discount factors, are assumed to depend on the whole vector of actions and payments of the agents. One can reduce the dependence of these functions to the particular action and payment of the corresponding agent if such setting is better suited for an application of the model. However, to keep the model as general as possible, we refrain to do so and point out that the techniques developed to approach both settings are exactly the same.
\end{remark}

\begin{definition}\label{def:nash}
We say the joint action $\alpha^\star\in\Ac$ is a Nash equilibrium for the contract $(\xi,\chi)\in\Rc^N\times\Xi^N$, denoted by $\alpha^\star\in\emph{NE}(\xi,\chi)$, if for every $i\in I$
\[
V_0^i(\alpha^{\star,-i},\xi^i,\chi^i) = U_0^i(\alpha^{\star,i},\alpha^{\star,-i},\xi^i,\chi^i).
\]
\end{definition}

The intuition is that, given a contract $(\xi,\chi)\in\R^N\times\Xi^N$ offered by the principal, the agents will perform a joint action from the set $\text{NE}(\xi,\chi)$. This action can be recommended by the principal herself and none of the agents would have incentives to deviate from his individual recommendation, if he assumes the others are following their recommendations. The agents will enter into a contractual relationship with the principal if the value they obtain from the Nash equilibrium is greater than their reservation values, denoted by $R_0^i\in\R$, for $i\in I$. We assume the reservation values are exogenous and known by the principal.

\begin{remark}
Our setting extends naturally the standard contracting problem with a single agent, which corresponds to the case $N=1$. In such case, the set of Nash equilibria $\emph{NE}(\xi,\chi)$ reduces to the set of optimal efforts of the single agent, that we denote later by $\Ac^\star(\xi,\chi)$ (see Section \ref{sec:single-agent}).
\end{remark}

\begin{remark}
We aim to cover the most well known Principal-Agent models in the literature. We extend the one-dimensional multi-agent problem by \citeauthor{elie2019contracting} \cite{elie2019contracting} for which we also relax the assumptions on the functions of the model and include jumps. If we take $N=1$ and $\beta^1=0$, by choosing $\rho\equiv 0$ we obtain the finite horizon version of Sannikov's model \cite{sannikov2008continuous}. By choosing $\Uc_A(x) = -\exp(-R_A x)$, with $R_A>0$, and $u_A \equiv c\equiv 0$ we recover the model by Holmstr\"om and Milgrom \cite{holmstrom1987aggregation} where $\rho$ can be chosen, for instance, with the form $\rho_s(x,k,a)=R_A(\hat{u}_A(k)-\hat{c}_s(x,a))$. This is the main reason for adding into $\rho$ the dependence on the continuous payments. If $N=1$ and $\beta^1(y,e)=e$, we obtain the model by \citeauthor*{capponi2015dynamic} \cite{capponi2015dynamic}. In the equivalent formulation \eqref{eq:dynamics-xi-old}, if $N=1$, $L_1=2$, $\beta^{1,1}(y,e)=1$, $\beta^{1,2}(y,e)=-1$, $F^1$ and $F^2$ are Dirac masses at $\{1\}$, we recover the model by \citeauthor*{euch2021optimal} \cite{euch2021optimal}. 
\end{remark}

Lastly, we introduce the dynamic version of the best response of agent $i$, to the actions of the others. This family of random variables is important for defining the set of admissible contracts of the principal in the next subsection. Given a contract $(\xi^i,\chi^i)\in\Rc\times\Xi$ for agent $i$, and actions $\alpha^{-i}$ of the other agents, we define for $\tau\in\Tc_{0,T}$\footnote{$\Tc_{0,T}$ denotes the set of stopping times with values in $[0,T]$ and $\E^{\P^\alpha}_\tau$ denotes the conditional expectation under $\P^\alpha$ with respect to $\Fc_\tau$.} 
\begin{equation}\label{eq:dynamic-value-agent}
V^i(\tau,\alpha^{-i},\xi^i,\chi^i) := \esssup_{\alpha^i\in \Ac^i(\alpha^{-i})} \E_\tau^{\P^\alpha} \left[ e^{-\int_\tau^T \rho^i_s(X_s,\chi_s,\alpha_s) \mathrm{d}s} ~ \Uc^i_A (\xi^i) + \int_\tau^T e^{-\int_\tau^s \rho^i_u(X_u,\chi_u,\alpha_u) \mathrm{d}u} \big( u_A^i(\chi_s) - c^i_s(X_s,\alpha_s) \big) \mathrm{d}s   \right],
\end{equation}
which represents the value obtained by the agent during the time interval $[\tau,T]$.

\subsection{Principal's problem}

The principal hires all the agents and wants to maximize her expected utility by anticipating the Nash equilibria resulting for the contracts. She has a terminal utility function $\Uc_P:\R^N \longrightarrow\R$ 
and an instantaneous utility function $u_P:\R^N \longrightarrow\R$, both functions being measurable. She liquidates the output processes through the measurable map $L:\R^{dN}\longrightarrow\R$ 
and discounts the future at instantaneous rate $r:\Omega\times [0,T] \times \R^{dN} \longrightarrow\R_+$, such that $r(\cdot,x)$ is $\F-$predictable for every $x\in \R^{dN}$. We start by defining the set of admissible contracts, that the principal can offer to the agents.

\begin{definition}\label{def-admisible-contracts}
Let $\xi=(\xi^1,\dots,\xi^N)\in\Rc^N$, $\chi=(\chi^1,\dots,\chi^N)\in\Xi^N$. We say that  $(\xi,\chi)$ is an admissible contract if it satisfies the following conditions
\begin{enumerate}[(i)]
\item $\forall \alpha\in\Ac$,  $\exists p_\alpha >1$ such that  $\sup_{\tau\in\Tc_{0,T}}  \E^{\P^\alpha} \left[  \big| V^i(\tau,\alpha^{-i},\xi^i,\chi^i)\big|^{p_\alpha} \right] < \infty$, for every $i\in I$.
\item $\forall \alpha\in\Ac$,  $\exists \hat p_\alpha >1$ such that $\frac{1}{p_\alpha} + \frac{1}{\hat p_\alpha}<1$ and 
\[
\sup_{\tau\in\Tc_{0,T}}  \E^{\P^\alpha} \left[ e^{-\hat p_\alpha \int_0^\tau \rho^i_s(X_s,\chi_s,\alpha_s) \mathrm{d}s} \right] < \infty, \quad \text{for every } i\in I.
\]
\item For every $\alpha\in\Ac$, $\tau\in\Tc_{0,T}$ we have
\[
\E^{\P^\alpha} \left[ \bigg| \int_0^\tau e^{-\int_0^s \rho^i_u(X_u,\chi_u,\alpha_u) \mathrm{d}u} \big( u_A^i(\chi_s) - c^i_s(X_s,\alpha_s) \big) \mathrm{d}s \bigg| \right] < \infty, \quad \text{for every } i\in I.
\]
\item There exists $\alpha^\star\in \emph{NE}(\xi,\chi)\neq \emptyset$ such that $V_0^i(\alpha^{\star,-i},\xi^i,\chi^i) \geq R_0^i$ for every $i\in I$. 
\end{enumerate}
We denote by $\Cc$ the set of admissible contracts.
\end{definition}

Let us start the discussion on the admissibility conditions by saying that point $(iv)$ contains the usual assumptions in the Principal-Agent literature. First, the game played by the agents will have Nash equilibria that the principal can anticipate, so then given any contract, she can recommend the agents the best joint action process to perform. 
Second, the agents will accept the contract and follow the recommended actions since it corresponds to a Nash equilibrium and their expected utility under it are greater than their reservation values.

\medskip
The other conditions are required for the problems involved to be well-defined, and to satisfy some technical properties that will be used in the following proofs. They are also chosen as the minimal required assumptions. Note that if the coefficient function $b$ is bounded and each $\lambda^i$ is bounded by an $F^i-$integrable function, then the density measures $(M^\alpha)_{\alpha\in\Ac}$ have finite moments of any order. Consequently, conditions $(i)-(iii)$ reduce to the same integrability properties but only under the initial measure $\P$.

\medskip
We conclude the section by introducing the problem of the principal, who offers to the agents an admissible contract which maximizes her own expected utility, subject to the result of their game
\begin{equation}\label{eq:principal-problem}
V_P:= \sup_{(\xi,\chi)\in\Cc} ~  \sup_{\alpha \in\text{NE}(\xi,\chi)} ~  \E^{\P^{\alpha}} \left[ e^{-\int_0^T r_s(X_s) \mathrm{d}s } ~ \Uc_P \bigg(L(X_T) - \sum_{i=1}^N \xi^i \bigg)  - \int_0^T e^{-\int_0^s r_s(X_u) \mathrm{d}u}  u_P(\chi_s) \mathrm{d}s \right].
\end{equation}

\section{Solving the agents' game}\label{sec:agent}

In this section, we fix a contract $(\xi,\chi)\in\Cc$ offered by the principal and we characterize its set of Nash equilibria, that is, the elements of $\text{NE}(\xi,\chi)$. We show that the principal can keep track of the actions of the agents by using their continuation values as state variables, which allows to reformulate her optimization problem as a standard stochastic control problem. We give a special treatment to the case of exponential utility, as we show that in such setting the principal can use different state variables, namely the certainty equivalents of the agents.  All the proofs are deferred to Appendix \ref{app:proofs-agents}.

\subsection{General case}

Let us introduce the function $f:\Omega\times[0,T]\times\R^{dN}\times\R^N\times\Mc^{N,dN} \times \Bc(\R^d;\Mc^{N,N}) \times\R^N\times A \longrightarrow\R^N$ defined component-wise\footnote{We use the convention that if any $h^{i,\ell}$ is not integrable with respect to $\eta_s^{a,\ell}(x,\textrm{d}u)$ then $f^i=-\infty$.} by
\[
f^{i}_s(x,y,z,h,k,a) = u^i_A(k) -c^i_s(x,a) - 	\rho^i_s(x,k,a) y^i +  z^{i,:}\Sigma_s(x)b_s(x,a) + \sum_{\ell=1}^N \int_{\R^d\setminus\{0\}} h^{i,\ell}(u) \eta_s^{\ell,a}(x,\textrm{d}u) , \quad i\in I.
\]
Our first assumption in this section is that the map $f$ possesses solutions to a fixed-point type of equation, which is linked to the Nash equilibria of the contract.
\begin{assumption}\label{ass-unique-max-f}
For every $(s,x,y,z,h,k)\in [0,T]\times\R^{dN}\times\R^N\times\Mc^{N,dN}\times \Bc(\R^d;\Mc^{N,N}) \times\R^N$ there exists a unique element in $\Omega\times A$, denoted by $a^\star(s,x,y,z,h,k)$, such that for any $i\in I$ it holds $\P-$a.s.
\[
\big\{ a^{\star,i}(s,x,y,z,h,k) \big\} = \argmax_{a^i\in A_i} ~ f_s^i(x,y,z,h,k,a^i\otimes_i a^{\star,-i}(s,x,y,z,h,k)).
\]
\end{assumption}

\begin{remark}
If the map $f^i$ is separable in $a\in A$ then the previous condition can be decoupled and it reduces to the existence of a maximizer of $f^i$. This is the case, for instance, if each $\lambda^i$ is independent of $e$ (see Remark \ref{rem:lambda-independent}) and the maps $\lambda^i$, $c^i$, $\rho^i$ and $b$ are linear in $a$ (or depend only on $a^i$).
\end{remark}

We define next the function $F:\Omega\times[0,T]\times\R^{dN}\times\R^N\times\Mc^{N,dN}\times \Bc(\R^d;\Mc^{N,N})\times\R^N\longrightarrow\R^N$ by
\[
F_s(x,y,z,h,k) := f_s(x,y,z,h,k,a^\star(s,x,y,z,h,k)).
\]

Consider now the following multidimensional BSDE with jumps, where $\Uc_A(\xi)\in\R^N$ denotes the vector whose $i-$th coordinate is equal to $\Uc^i_A(\xi^i)$.
\begin{equation}\label{eq:bsde-agent}
Y_t = \Uc_A(\xi) + \int_t^T F_s(X_s,Y_s,Z_s,H_s,\chi_s) \mathrm{d}s - \int_t^T Z_s \mathrm{d}X^c_s - \int_t^T \int_{\R^d\setminus\{0\}} H_t(x) \vec{\mu}_X(\textrm dt, \textrm dx) , \quad t\in[0,T].
\end{equation}

\begin{definition}
A solution to the BSDE \eqref{eq:bsde-agent} is a triple $(Y,Z,H)$ such that $Y$ is an $\R^N-$valued $\F-$semimartingale satisfying \eqref{eq:bsde-agent} and $(Z,H)\in\V(X):= \H^{2,N\times dN}_\text{loc}(X)\times \L^{1,N\times N}_{\text{loc}}(\vec{\mu}_X)$, where
\begin{align*}
& \H^{2,N\times dN}_\text{loc}(X)  = \bigg\{  \Mc^{N,dN}\text{-valued, }\F\text{-predictable processes } Z: \int_0^T |Z_s\Sigma_s(X_s)|^2 ~ \mathrm{d}s <\infty, ~ \P-\text{a.s.}   \bigg\}, \\
& \L^{1,N\times N}_{\text{loc}}(\vec{\mu}_X)  = \bigg\{    \Mc^{N,N}\text{-valued, }\F\text{-predictable function } H:  H^{i,\ell}\in \L^1_\text{loc}(\mu_{X^\ell})  ~~ \forall (i,\ell)\in I\times I  \bigg\}.
\end{align*}

\end{definition}

Our second and last assumption in this section will be that the map $F$ provides well-possedness of the forward version of BSDE \eqref{eq:bsde-agent}, which is used to obtain a convenient representation of the set of admissible contracts.

\begin{assumption}\label{ass:nice-F}
The map $F$ is such that 
for every $y\in\R^N$ and $(Z,H)\in \V(X)$ there exists a unique strong solution to the following SDE
\[
Y^{y,Z,H,\chi}_t = y - \int_0^t F_s(X_s,Y^{y,Z,H,\chi}_s,Z_s,H_s,\chi_s) \emph{d}s + \int_0^t Z_s\emph{d}X^c_s + \int_0^t \int_{\R^d\setminus\{0\}} H_t(x) \vec{\mu}_X(\mathrm{d}t, \mathrm{d}x) , \quad t\in[0,T]. 
\]
\end{assumption}

\begin{remark}
A sufficient condition for Assumption \ref{ass:nice-F} is that the map $F$ is Lipschitz on $y$. In the case of a single-agent, this is the case for instance if the map $\rho$ is bounded (see Remark \ref{rem-rho-bounded}).
\end{remark}

\medskip
In Proposition \ref{prop-agent-bsde} we show that the processes with the previous form correspond to the continuation values of the agents under the Nash equilibrium of the contract. At time $t=0$ those values are given by $y\in\R^N$ which assures the acceptance of the contracts by the agents when they are bigger than the reservation values. To establish the correspondence, we must impose some conditions to be satisfied by the processes $Z$ and $H$. We define now the class of processes to which we will restrict our attention, as a function of the initial values of the agents $y$.

\begin{definition} (i) For $y\in\R^N$ and $(Z,H)\in \V(X)$, we define the process $a^{\star,y,Z,H,\chi}$ by
\[
a_s^{\star,y,Z,H,\chi} := a^{\star}(s,X_s,Y^{y,Z,H,\chi}_s,Z_s,H_s,\chi_s), \quad \emph{d}t\otimes \emph{d}\P-\text{a.s. over } [0,T]\times \Omega.
\]
(ii) For $y\in\R^N$, we denote by $\Vc^{y,\chi}$ the following class of processes

\begin{align*}
\Vc^{y,\chi}:= \bigg\{   (Z,H)\in\V(X):  a^{\star,y,Z,H,\chi} \in\Ac; ~ \forall i\in I, ~ Y_T^{y,Z,H,\chi,i} \in \mathrm{Im}(\Uc_A^i) ~ \P-\mathrm{a.s.}  \text{ and }   \forall \alpha^i\in\Ac^i( a^{\star,y,Z,H,\chi,-i}), ~ ~~~~&  \\ 
\exists q_{\alpha^i} >1  \text{ s.t. } \frac{1}{q_{\alpha^i}} + \frac{1}{\hat p_{\alpha^i \otimes_i a^{\star,Z,H,\chi,-i}}} < 1, ~~  \sup_{\tau\in\Tc_{0,T}} \E^{\P^{\alpha^i\otimes_i a^{\star,y,Z,H,\chi,-i}}}[| Y_\tau^{y,Z,H,\chi,i} |^{q_{\alpha^i}}] < \infty \bigg\}. & 
\end{align*}
\end{definition}

We present now the main result of this section. As usual in the literature, we establish an equivalence between finding the Nash equilibria to the contract $(\xi,\chi)$ and solving BSDE \eqref{eq:bsde-agent}. 

\begin{proposition}\label{prop-agent-bsde}
For every $\alpha^\star\in\emph{NE}(\xi,\chi)$ there exists a solution $(Y,Z,H)$ to BSDE \eqref{eq:bsde-agent}, with $(Z,H)\in\Vc^{Y_0,\chi}$, such that 
\begin{equation}\label{eq:a-star}
\alpha_s^{\star,i} = a_s^{\star,Y,Z,H,\chi,i}, \quad \emph{d}t\otimes \emph{d}\P-\text{a.s. over } [0,T]\times \Omega, \quad \forall i\in I.
\end{equation}
Conversely, let $(Y,Z,H)$ be a solution to BSDE \eqref{eq:bsde-agent}, with $(Z,H)\in\Vc^{Y_0,\chi}$. Then the control defined by \eqref{eq:a-star} belongs to $\emph{NE}(\xi,\chi)$.
\end{proposition}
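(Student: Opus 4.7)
The plan is to treat each direction as a single-agent verification argument against the frozen profile $\alpha^{\star,-i}$ of the other players. For any fixed $\alpha^{-i}$, agent $i$'s problem \eqref{eq:agent-problem} is a standard weak-formulation control problem, whose value admits a BSDE representation under the integrability built into Definition \ref{def-admisible-contracts}; combined with the pointwise fixed-point property of $a^\star$ in Assumption \ref{ass-unique-max-f}, the Nash characterization then collapses onto the single multidimensional BSDE \eqref{eq:bsde-agent}.

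For the forward direction, I fix $\alpha^\star\in\text{NE}(\xi,\chi)$ and set $Y_t^i := V^i(t,\alpha^{\star,-i},\xi^i,\chi^i)$ for each $i\in I$. By the dynamic programming principle applied to agent $i$'s single-player problem against $\alpha^{\star,-i}$, the process
\[
M_t^i := e^{-\int_0^t \rho_s^i(X_s,\chi_s,\alpha_s^\star)\mathrm{d}s} Y_t^i + \int_0^t e^{-\int_0^s \rho_u^i(X_u,\chi_u,\alpha_u^\star) \mathrm{d}u}\big(u_A^i(\chi_s) - c_s^i(X_s,\alpha_s^\star)\big)\mathrm{d}s
\]
is a $\P^{\alpha^\star}$-martingale, because $\alpha^{\star,i}$ is a best response by the Nash property. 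I then apply the martingale representation (Assumption \ref{ass:MRP-coordinates}) to $M^i$ under $\P^{\alpha^\star}$ and invert Girsanov to return to $\P$, producing $Z^{i,:}\in\H^{2,1\times dN}_{\text{loc}}(X)$ and $H^{i,\ell}$ that realize the $i$-th line of \eqref{eq:bsde-agent} with driver $f_s^i(\cdots,\alpha_s^\star)$. Best-response optimality shows that $\alpha^{\star,i}$ is a pointwise maximizer of $a^i\mapsto f_s^i(X_s,Y_s,Z_s,H_s,\chi_s,a^i\otimes_i \alpha_s^{\star,-i})$, so Assumption \ref{ass-unique-max-f} identifies this maximizer with $a^\star$ and the driver collapses to $F$, yielding \eqref{eq:a-star}. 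Membership $(Z,H)\in\Vc^{Y_0,\chi}$ is inherited from items (i)--(iii) of Definition \ref{def-admisible-contracts} together with Assumption \ref{ass:nice-F}.

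For the converse, given $(Y,Z,H)$ solving \eqref{eq:bsde-agent} with $(Z,H)\in\Vc^{Y_0,\chi}$, I define $\alpha^{\star,i}:=a^{\star,Y,Z,H,\chi,i}$. Fixing $i$ and an arbitrary $\alpha^i\in\Ac^i(\alpha^{\star,-i})$, I apply Itô's formula under $\P^{\alpha^i\otimes_i \alpha^{\star,-i}}$ to
\[
R_t^i := e^{-\int_0^t \rho_s^i(X_s,\chi_s,\alpha_s^i\otimes_i \alpha_s^{\star,-i})\mathrm{d}s} Y_t^i + \int_0^t e^{-\int_0^s \rho_u^i(X_u,\chi_u,\alpha_u^i\otimes_i\alpha_u^{\star,-i}) \mathrm{d}u}\big(u_A^i(\chi_s) - c_s^i(X_s,\alpha_s^i\otimes_i \alpha_s^{\star,-i})\big)\mathrm{d}s.
\]
The drift of $R^i$ equals $f_s^i(\cdots,\alpha^i\otimes_i \alpha^{\star,-i}) - F_s^i(\cdots)$, which is $\leq 0$ by the maximality built into $a^\star$ and vanishes at $\alpha^i=\alpha^{\star,i}$. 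Hence $R^i$ is a local $\P^{\alpha^i\otimes_i \alpha^{\star,-i}}$-supermartingale and a local martingale at the candidate. Taking expectations at $T$ and using $Y_T^i=\Uc_A^i(\xi^i)$ yields $Y_0^i\geq U_0^i(\alpha^i,\alpha^{\star,-i},\xi^i,\chi^i)$ with equality for $\alpha^i=\alpha^{\star,i}$, i.e., $\alpha^\star\in\text{NE}(\xi,\chi)$.

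The main obstacle is promoting the local (super)martingales in both directions to truly integrable objects, so that expectations at $T$ can actually be taken. This is where the bespoke integrability structure of Definition \ref{def-admisible-contracts} and of $\Vc^{y,\chi}$ is tailored: bullets (i)--(ii) provide $L^{p_\alpha}$ and $L^{\hat p_\alpha}$ control of $V^i$ and of the discounting factor under any admissible change of measure, while the $q_{\alpha^i}$-moment bound on $Y^i$ under $\P^{\alpha^i\otimes_i \alpha^{\star,-i}}$ appearing in the definition of $\Vc^{y,\chi}$ is precisely what a Hölder/Young argument with the conjugate triple $(p_\alpha,\hat p_\alpha,q_{\alpha^i})$ requires to render the family uniformly integrable along a localization sequence. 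A secondary subtlety is the measurable selection of $a^\star$ needed when passing from the pointwise maximality inside $f^i$ to its identification with the maximizer of $F$; this is bypassed by the uniqueness clause of Assumption \ref{ass-unique-max-f}.
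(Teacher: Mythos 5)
Your proposal follows essentially the same route as the paper's proof: the forward direction uses the dynamic programming principle to get the supermartingale family with a true martingale at the Nash action, then martingale representation and Itô to produce $(Y,Z,H)$ and the pointwise maximality identifying $\alpha^\star$ with $a^\star$ via Assumption \ref{ass-unique-max-f}; the converse is the same verification argument (drift $f^i-F^i\le 0$, equality at the candidate), with the integrability of Definition \ref{def-admisible-contracts} and of $\Vc^{Y_0,\chi}$ used—as in the paper's class (D)/Hölder argument—to turn the local (super)martingales into true ones. No substantive differences.
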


\medskip
The previous proposition allows us to reformulate the problem of the principal as a standard stochastic control problem, by restricting without loss of generality the form of the admissible contracts. The unique Nash equilibrium to the contracts in the class $\Vc^{\chi,y}$ follows immediately from Proposition \ref{prop-agent-bsde}.

\begin{corollary}\label{cor-optimal-response}
Let $y\in\R^N$, $(Z,H)\in\Vc^{y,\chi}$ and each $\xi^i= (\Uc_A^i)^{-1}( Y_T^{y,Z,H,\chi,i})$. Then the joint action $a^{\star,y,Z,H,\chi}$ belongs to $\emph{NE}(\xi,\chi)$ and the values of the agents are given by $V_0^i(\alpha^{\star,-i},\xi^i,\chi^i)=y^i$, for every $i\in I$.
\end{corollary}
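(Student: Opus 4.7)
The plan is to derive the corollary as a direct application of the converse direction of Proposition \ref{prop-agent-bsde}, combined with the forward-SDE construction provided by Assumption \ref{ass:nice-F}.

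First, I would invoke Assumption \ref{ass:nice-F} to build the process $Y^{y,Z,H,\chi}$, the unique strong solution of the forward SDE started at $y$ and driven by the given $(Z,H)\in\Vc^{y,\chi}$. By construction, the triple $(Y^{y,Z,H,\chi},Z,H)$ satisfies the dynamics of BSDE \eqref{eq:bsde-agent}; only the terminal condition needs to be checked. Here the definition of $\Vc^{y,\chi}$ guarantees that $Y_T^{y,Z,H,\chi,i}\in\mathrm{Im}(\Uc_A^i)$ $\P$-a.s., so setting $\xi^i:=(\Uc_A^i)^{-1}(Y_T^{y,Z,H,\chi,i})$ is legitimate and yields $\Uc_A^i(\xi^i)=Y_T^{y,Z,H,\chi,i}$ for every $i\in I$. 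Hence $(Y^{y,Z,H,\chi},Z,H)$ is a bona fide solution to BSDE \eqref{eq:bsde-agent} with terminal data $\Uc_A(\xi)$, and its initial value is $Y_0=y$, so $(Z,H)\in\Vc^{y,\chi}=\Vc^{Y_0,\chi}$.

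Next, I would apply the converse implication of Proposition \ref{prop-agent-bsde}: since we have just exhibited a solution of BSDE \eqref{eq:bsde-agent} with $(Z,H)\in\Vc^{Y_0,\chi}$, the process $\alpha^\star$ defined component-wise by $\alpha_s^{\star,i}=a_s^{\star,Y,Z,H,\chi,i}$, i.e.\ $\alpha^\star=a^{\star,y,Z,H,\chi}$, is an element of $\text{NE}(\xi,\chi)$. This gives the first claim of the corollary.

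Finally, for the identification of values, I would use the characterization established inside the proof of Proposition \ref{prop-agent-bsde}: along a Nash equilibrium the coordinates of $Y$ coincide with the dynamic values of the agents, i.e.\ $Y_t^i=V^i(t,\alpha^{\star,-i},\xi^i,\chi^i)$ for each $i$. Evaluating at $t=0$ and using $Y_0=y$ yields $V_0^i(\alpha^{\star,-i},\xi^i,\chi^i)=y^i$, completing the proof. There is no real obstacle here; the corollary is a bookkeeping consequence of Proposition \ref{prop-agent-bsde}, and the only subtlety is checking that the initial value of the forward SDE is precisely $y$ so that the class $\Vc^{y,\chi}$ matches the class $\Vc^{Y_0,\chi}$ in the hypothesis of the proposition.
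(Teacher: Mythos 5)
Your proposal is correct and follows essentially the same route as the paper: observe that $(Y^{y,Z,H,\chi},Z,H)$ is a solution of BSDE \eqref{eq:bsde-agent} with terminal value $\Uc_A(\xi)$ (using $Y_T^{y,Z,H,\chi,i}\in\mathrm{Im}(\Uc_A^i)$), apply the converse direction of Proposition \ref{prop-agent-bsde}, and read off the values from the proposition's proof. The only small refinement is that the value identification $V_0^i(\alpha^{\star,-i},\xi^i,\chi^i)=Y_0^i=y^i$ is most directly justified by part $(ii)$ of that proof (which shows $U_0^i(\alpha^i,\alpha^{\star,-i},\xi^i,\chi^i)\leq Y_0^i$ with equality at $\alpha^{\star,i}$ for the very solution you constructed), as the paper does, rather than by the part-$(i)$ identification of $Y$ with the dynamic value process, which strictly speaking concerns a possibly different solution and would require a uniqueness argument to transfer.
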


\medskip
By combining all the results in this section, we obtain the reformulation of the problem of the principal. 
\begin{align*} \label{eq:reformulated-principal}
V_P = \sup_{y \geq R_0, \chi\in\Xi_2} \sup_{(Z,H)\in\Vc^{y,\chi}} ~  \E^{\P^{a^{\star,y,Z,H,\chi}}} \bigg[  e^{-\int_0^T r_s(X_s) \mathrm{d}s } ~  \Uc_P \bigg(L(X_T) &- \sum_{i=1}^N \Uc_A^{-1}(Y_T^{y,Z,H,\chi,i}) \bigg) \\
 & - \int_0^T e^{-\int_0^s r_s(X_u) \mathrm{d}u}  u_P(\chi_s) \mathrm{d}s \bigg],
\end{align*}
where $\Xi_2$ is the set of $\F-$predictable processes $(\chi_t)_{t\in[0,T]}$ such that $\Vc^{y,\chi}\neq\emptyset$ for some $y$, $R_0\in\R^N$ is the vector whose $i-$th coordinate is $R_0^i$ and the corresponding inequality is component-wise.

\subsection{The case of exponential utility} \label{sec:exp-utility}

Assume now that for every agent $i\in I$, we have $u_A^i, c^i \equiv 0$ and $\Uc_A^i(x)=-\exp(-R_A^i x)$, with $R_A^i > 0$ the risk aversion of agent $i$. In the case of exponential utility the map $\rho^i$ usually takes the form $\rho^i_s(x,k,a)=R_A^i(\hat{u}_A^i(s,x,k)-\hat{c}_s^i(x,a))$, for some utility and cost functions $\hat u_A^i$ and $\hat c^i$. However, for the sake of generality we do not impose $\rho^i$ to have this specific form. 

\medskip
In this setting we can obtain an alternative representation of the admissible contracts, by using the certainty equivalent processes of the agents instead of their continuation values. To this end, we introduce the corresponding function $g:\Omega\times[0,T]\times\R^{dN}\times\Mc^{N,dN}\times  \Bc(\R^d;\Mc^{N,N})\times\R^N \times A \longrightarrow\R^N$ defined component-wise by 
\[
g^i_s(x,z,h,k,a) :=  \frac{1}{R_A^i}\rho^i_s(x,k,a) + z^{i,:}\Sigma_s(x)b_s(x,a) - \frac{1}{2}R_A^i ||z^{i,:}\Sigma_s(x)||^2 + \frac{1}{R_A^i} \sum_{\ell=1}^N  \int_{\R^d}\big(1-e^{R_A^i h^{i,\ell}(u)})\eta_s^{\ell,a}(x,\textrm{d}u),~  i\in I.
\]
As in the previous subsection, we assume that the map $g$ possesses solutions to the corresponding fixed-point equation.
\begin{assumption}\label{ass-unique-max-g}
For every $(s,x,z,h,k)\in [0,T]\times\R^{dN}\times\Mc^{N,dN}\times  \Bc(\R^d;\Mc^{N,N}) \times\R^N$ there exists a unique element in $\Omega\times A$, denoted by $\hat a^\star(s,x,z,h,k)$, such that for any $i\in I$ it holds $\P-$a.s.
\[
\big\{ \hat a^{\star,i}(s,x,z,h,k) \big\} = \argmax_{a^i\in A_i} ~ g_s^i(x,z,h,k,a^i\otimes_i \hat a^{\star,-i}(s,x,z,h,k)).
\]
\end{assumption}

We define now $G:\Omega\times[0,T]\times\R^{dN}\times\Mc^{N,dN}\times \Bc(\R^d;\Mc^{N,N})\times\R^N \longrightarrow\R^N$ by
\[
G^i_s(x,z,h,k) :=  g^i_s(x,z,h,k,\hat a^\star(s,x,z,h,k)).
\]
Consider next the following BSDE with jumps, with the vector $\xi:=(\xi^1,\dots,\xi^N)^\top$
\begin{equation}\label{eq:bsde-agent-cert-equiv}
Y_t = \xi + \int_t^T G_s(X_s,Z_s,H_s,\chi_s) \mathrm{d}s - \int_t^T Z_s \mathrm{d}X^c_s + \int_t^T \int_{\R^d\setminus\{0\}} H_s(x)\vec{\mu}_X(\textrm{d}t,\textrm{d}x) , \quad t\in[0,T].
\end{equation}
For $y\in\R^N$ and $(Z,H)\in\V(X)$, we define the process $Y^{y,Z,H,\chi}$ as follows\footnote{Note this is a direct definition since $Y^{y,Z,H,\chi}$ does not appear in the right-hand side.}
\[
Y^{y,Z,H,\chi}_t := y - \int_0^t G_s(X_s,Z_s,H_s,\chi_s) \textrm{d}s + \int_0^t Z_s \mathrm{d}X^c_s - \int_0^t \int_{\R^d\setminus\{0\}} H_s(x)\vec{\mu}_X(\textrm{d}t,\textrm{d}x) , \quad t\in[0,T]. 
\]
We define now the class of processes to which we will restrict our attention. In the previous section we reformulated the problem of the principal through classes of processes associated to fixed values of the agent. In the setting of exponential utility, this reformulation can be achieved through a single class of processes independent of the initial values.

\begin{definition} We denote by $\hat \Vc^\chi$ the following class of processes, with $a^{\star,Z,H,\chi}_s: = \hat a^\star(s,X_s,Z_s,H_s,\chi_s)$
\begin{align*}
\hat\Vc^\chi:= \bigg\{   (Z,H)\in\V(X):  \text{ the process } a^{\star,Z,H,\chi} \in\Ac \text{ and }  \forall i\in I, ~ \forall \alpha^i\in\Ac^i( a^{\star,Z,H,\chi,-i}), ~ \exists q_{\alpha^i} >1 \text{ such that } ~~&  \\ 
 \frac{1}{q_{\alpha^i}} + \frac{1}{\hat p_{\alpha^i \otimes_i a^{\star,Z,H,\chi,-i}}} < 1, ~~  \sup_{\tau\in\Tc_{0,T}} \E^{\P^{\alpha^i\otimes_i a^{\star,Z,H,\chi,-i}}}[|\Uc_A^i(Y_\tau^{0,Z,H,\chi,i})|^{q_{\alpha^i}}] < \infty \bigg\}. & 
\end{align*}
\end{definition}


We have then an alternative representation of the set $\text{NE}(\xi,\chi)$, which will be used in this setting. The following is an analogous to Proposition \ref{prop-agent-bsde}. 

\begin{proposition}\label{prop-agent-bsde-cert-equiv}
For every $\alpha^\star\in \emph{NE}(\xi,\chi)$ there exists a solution $(Y,Z,H)$ to BSDE \eqref{eq:bsde-agent-cert-equiv}, with $(Z,H)\in\hat\Vc^\chi$, such that
\begin{equation}\label{eq:a-star-cert-equiv}
\alpha^\star_s = a^{\star,Z,H,\chi}_s , \quad \emph{d}t\otimes\emph{d}\P-\text{a.s. over } [0,T]\times \Omega.
\end{equation}
Conversely, let $(Y,Z,H)$ be a solution to BSDE \eqref{eq:bsde-agent-cert-equiv}, with $(Z,H)\in\hat\Vc^\chi$. Then the control $a^{\star,Z,H,\chi}$ belongs to $\emph{NE}(\xi,\chi)$.
\end{proposition}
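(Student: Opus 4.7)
The natural approach is to deduce Proposition \ref{prop-agent-bsde-cert-equiv} from Proposition \ref{prop-agent-bsde} via the bijection $\tilde Y^i \leftrightarrow Y^i := (\Uc_A^i)^{-1}(\tilde Y^i) = -\frac{1}{R_A^i}\log(-\tilde Y^i)$ between the continuation utility and the certainty equivalent of agent $i$, together with an application of It\^o's formula with jumps. This reduces the proof to three tasks: (a) transporting solutions of BSDE \eqref{eq:bsde-agent} to solutions of BSDE \eqref{eq:bsde-agent-cert-equiv} and back; (b) showing that $\argmax_{a^i} f^i = \argmax_{a^i} g^i$, so that $a^\star = \hat a^\star$ at the corresponding arguments in the exponential case; and (c) matching the admissibility sets $\Vc^{\tilde Y_0,\chi}$ and $\hat\Vc^\chi$.

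For the forward direction, fix $\alpha^\star \in \text{NE}(\xi,\chi)$ and let $(\tilde Y, \tilde Z, \tilde H)$ be the triple produced by Proposition \ref{prop-agent-bsde}. The representation $\tilde Y^i_t = \E^{\P^{\alpha^\star}}_t[e^{-\int_t^T \rho^i \mathrm{d}s}\Uc_A^i(\xi^i)]$ together with $\Uc_A^i(\xi^i) < 0$ yields $\tilde Y^i < 0$ and, at jump times, $\tilde Y^i_{-} + \tilde H^{i,\ell}(\Delta X^\ell) < 0$ a.s. Setting $Y^i := -\frac{1}{R_A^i}\log(-\tilde Y^i)$ and applying It\^o's formula, one finds that $(Y,Z,H)$, with $Z^{i,:} := -\tilde Z^{i,:}/(R_A^i \tilde Y^i_{-})$ and $H^{i,\ell}(x) := \frac{1}{R_A^i}\log(1+\tilde H^{i,\ell}(x)/\tilde Y^i_{-})$, satisfies a BSDE of the form \eqref{eq:bsde-agent-cert-equiv}. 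The key algebraic identity, obtained by direct computation using $u_A^i = c^i = 0$, is
\[
f^i_s(x,\tilde y, \tilde z, \tilde h, k, a) = -R_A^i \tilde y^i\,\Bigl(g^i_s(x,z,h,k,a) + \tfrac12 R_A^i \|z^{i,:}\Sigma_s(x)\|^2\Bigr),
\]
where $(\tilde z, \tilde h)$ and $(z,h)$ are related by the above formulas. Since $-R_A^i \tilde y^i > 0$ and the bracketed correction is independent of $a$, $\argmax_{a^i} f^i = \argmax_{a^i} g^i$; Assumptions \ref{ass-unique-max-f}--\ref{ass-unique-max-g} then give $\hat a^\star(\cdot,X,Z,H,\chi) = a^\star(\cdot,X,\tilde Y, \tilde Z, \tilde H,\chi) = \alpha^\star$, so $G^i = g^i(\cdot, \hat a^\star)$ is precisely the driver appearing in \eqref{eq:bsde-agent-cert-equiv}.

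For the converse direction, starting from $(Y,Z,H)$ solving \eqref{eq:bsde-agent-cert-equiv} with $(Z,H) \in \hat\Vc^\chi$, set $\tilde Y^i := \Uc_A^i(Y^i)$, $\tilde Z^{i,:} := -R_A^i \tilde Y^i_{-} Z^{i,:}$ and $\tilde H^{i,\ell}(x) := \tilde Y^i_{-}(e^{R_A^i H^{i,\ell}(x)} - 1)$; It\^o's formula and the same algebraic identity show that $(\tilde Y, \tilde Z, \tilde H)$ solves \eqref{eq:bsde-agent}. The admissibility $(\tilde Z, \tilde H) \in \Vc^{\tilde Y_0, \chi}$ is checked term-by-term: the terminal condition is automatic since $\tilde Y^i_T = \Uc_A^i(\xi^i) \in \mathrm{Im}(\Uc_A^i)$; and because $G$ is independent of $Y$ in the exponential setting, $Y^{0,Z,H,\chi,i}_t = Y^i_t - Y^i_0$, so the moment condition $\sup_\tau \E^{\P^{\alpha}}[|\tilde Y^i_\tau|^{q_{\alpha^i}}] < \infty$ follows (up to the multiplicative constant $e^{R_A^i Y^i_0}$) from the integrability condition built into $\hat\Vc^\chi$. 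Proposition \ref{prop-agent-bsde} then yields $\alpha^\star := a^{\star,Z,H,\chi} \in \text{NE}(\xi,\chi)$. The main technical obstacle is justifying the pathwise sign condition $\tilde Y^i_{-} + \tilde H^{i,\ell} < 0$ at jump times, which underlies well-definedness of the logarithmic transformation, and controlling the integrability of the jump coefficient $H^{i,\ell}$ in $\L^1_{\mathrm{loc}}(\mu_{X^\ell})$; once these are in hand, the algebraic identity above does all the remaining work.
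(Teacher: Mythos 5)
Your reduction of Proposition \ref{prop-agent-bsde-cert-equiv} to Proposition \ref{prop-agent-bsde} via the exponential change of variables $Y^i=-\tfrac{1}{R_A^i}\log(-\tilde Y^i)$ is a genuinely different route from the paper's. The paper proves the statement from scratch: for the forward direction it reruns the dynamic programming argument, aggregates the discounted value family into the supermartingales $R^{\alpha^i}$, and then — this is exactly the step your transformation replaces — invokes the multiplicative (Yoeurp) decomposition of the \emph{negative} martingale $R^{\alpha^{\star,i}}$ together with the martingale representation property to write it as a stochastic exponential, from which the certainty-equivalent BSDE \eqref{eq:bsde-agent-cert-equiv} and the identification \eqref{eq:a-star-cert-equiv} follow by It\^o; for the converse it verifies directly that $U^{\alpha^i}_t=e^{-\int_0^t\rho^i_s\mathrm{d}s}\,\Uc_A^i(Y^i_t)$ is a class-(D) supermartingale for every deviation and a martingale under $\alpha^\star$. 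Your computations are sound: the identity $f^i=-R_A^i\tilde y^i\big(g^i+\tfrac12 R_A^i\|z^{i,:}\Sigma_s(x)\|^2\big)$ with the stated correspondences is correct, the argmax identification then only needs the uniqueness in Assumption \ref{ass-unique-max-g}, the sign condition $\tilde Y^i_-+\tilde H^{i,\ell}(\Delta X^\ell)=\tilde Y^i<0$ at jump times does follow from negativity of the continuation value, and the equality $|\Uc_A^i(Y^{0,Z,H,\chi,i}_\tau)|=e^{R_A^iY^i_0}|\tilde Y^i_\tau|$ (valid because $G$ is independent of $y$) matches the two moment conditions, so the admissibility classes transfer as you claim.

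Two caveats keep this from being a drop-in replacement for the paper's proof. First, your route imports the general-case hypotheses: Proposition \ref{prop-agent-bsde}, the map $F$ and the class $\Vc^{y,\chi}$ are only defined under Assumptions \ref{ass-unique-max-f} and \ref{ass:nice-F}, and Assumption \ref{ass-unique-max-g} does not imply Assumption \ref{ass-unique-max-f} — your argmax equivalence holds only where $\tilde y^i<0$, whereas Assumption \ref{ass-unique-max-f} demands a unique fixed point for arbitrary real $y$. The exponential subsection is built precisely to avoid this (no analogue of Assumption \ref{ass:nice-F} is needed since $G$ is $y$-free, and the paper's direct proof uses only Assumption \ref{ass-unique-max-g}), so your argument proves the proposition under strictly stronger standing assumptions. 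Second, the negativity of $\tilde Y^i$, which underpins the logarithm and the definition of $H^{i,\ell}$, is not part of the \emph{statement} of Proposition \ref{prop-agent-bsde}; you extract it from the identification of $\tilde Y^i$ with the continuation value inside its proof, which is legitimate but should be made explicit, and $H^{i,\ell}(x)$ must be given an arbitrary finite value off the support of the compensator where $1+\tilde H^{i,\ell}(x)/\tilde Y^i_-$ may fail to be positive. Neither point breaks the argument, but the first means your proof yields a slightly weaker theorem than the paper's, whose direct DPP-plus-multiplicative-decomposition proof is what buys the economy of assumptions.
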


We present next the analogous result to Corollary \ref{cor-optimal-response}.

\begin{corollary}\label{cor-optimal-response-cert-equiv}
Let $y\in\R^N$, $(Z,H)\in\hat\Vc^\chi$ and each remuneration $\xi^i=Y_T^{y,Z,H,\chi,i}$. Then the Nash equilibrium of the game is given by $a^{\star,Z,H,\chi}$ and the value of each agent under it is $V_0^i(\alpha^{\star,-i},\xi^i,\chi^i)=\Uc_A^i(y^i)$.
\end{corollary}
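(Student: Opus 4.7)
The plan is to derive this corollary as a direct consequence of the converse direction of Proposition \ref{prop-agent-bsde-cert-equiv}, by exhibiting the forward process associated with $(y,Z,H,\chi)$ as the solution of BSDE \eqref{eq:bsde-agent-cert-equiv} with the prescribed terminal condition.

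First, I would set $Y_t := Y_t^{y,Z,H,\chi}$, defined by the forward equation displayed just before the statement of the class $\hat\Vc^\chi$. By construction $Y_0 = y$ and $Y_T^i = Y_T^{y,Z,H,\chi,i}$, which, by the very choice of remuneration in the statement, equals $\xi^i$ for every $i\in I$. Rewriting the forward dynamics on $[t,T]$ in backward form yields
\[
Y_t = \xi + \int_t^T G_s(X_s,Z_s,H_s,\chi_s)\,\mathrm{d}s - \int_t^T Z_s\,\mathrm{d}X^c_s + \int_t^T\int_{\R^d\setminus\{0\}} H_s(x)\,\vec{\mu}_X(\mathrm{d}s,\mathrm{d}x),
\]
so that $(Y,Z,H)$ is a solution of BSDE \eqref{eq:bsde-agent-cert-equiv} with terminal value $\xi$. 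Since $(Z,H)\in\hat\Vc^\chi$ by hypothesis, the admissibility and integrability conditions required to invoke Proposition \ref{prop-agent-bsde-cert-equiv} hold automatically.

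The converse direction of Proposition \ref{prop-agent-bsde-cert-equiv} applied to this triple then immediately delivers $a^{\star,Z,H,\chi} \in \text{NE}(\xi,\chi)$, proving the first assertion. For the value statement, the key observation is that $Y^i$ is precisely the certainty equivalent process associated with agent $i$: the exponential change of variables $-\exp(-R_A^i Y_t^i)$ turns BSDE \eqref{eq:bsde-agent-cert-equiv} into the general BSDE \eqref{eq:bsde-agent} of Section \ref{sec:agent} specialised to the exponential-utility setting ($u_A^i \equiv c^i \equiv 0$ and $\Uc_A^i(\xi^i) = -\exp(-R_A^i \xi^i)$). The identification obtained in the proof of Proposition \ref{prop-agent-bsde-cert-equiv}, namely $-\exp(-R_A^i Y_t^i) = V^i(t,\alpha^{\star,-i},\xi^i,\chi^i)$ along the Nash equilibrium, evaluated at $t=0$ gives $V_0^i(\alpha^{\star,-i},\xi^i,\chi^i) = \Uc_A^i(Y_0^i) = \Uc_A^i(y^i)$.

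The main obstacle, which would really be handled inside the proof of Proposition \ref{prop-agent-bsde-cert-equiv} rather than here, is verifying that the generator $G$ is exactly the transform of $F$ under the exponential change of variables, so that the fixed-point maximisers $\hat a^\star$ and $a^\star$ coincide on the transformed data, and that the integrability conditions built into $\hat\Vc^\chi$ translate precisely into those of the general class $\Vc^{\Uc_A(y),\chi}$. Once these consistencies are established, the corollary follows in one line, entirely analogously to the way Corollary \ref{cor-optimal-response} follows from Proposition \ref{prop-agent-bsde}.
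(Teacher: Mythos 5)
Your proposal is correct and follows essentially the same route as the paper: observe that $(Y^{y,Z,H,\chi},Z,H)$ solves BSDE \eqref{eq:bsde-agent-cert-equiv} with terminal value $\xi$, invoke the converse direction of Proposition \ref{prop-agent-bsde-cert-equiv} to get $a^{\star,Z,H,\chi}\in\mathrm{NE}(\xi,\chi)$, and read off $V_0^i(\alpha^{\star,-i},\xi^i,\chi^i)=\Uc_A^i(Y_0^i)=\Uc_A^i(y^i)$ from part $(ii)$ of that proposition's proof. Your final worry about matching $G$ with the exponential transform of $F$ is not actually needed, since part $(ii)$ of the proposition's proof works directly with the certainty-equivalent BSDE, but this does not affect the correctness of your argument.
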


To conclude this section, we obtain the reformulation of the problem of the principal. 
\begin{equation} \label{eq:reformulated-principal-cert-equiv}
V_P = \sup_{ y \geq \Uc_A^{-1}(R_0) } \sup_{\chi\in\hat\Xi_2} \sup_{(Z,H)\in\hat\Vc^{\chi}} ~  \E^{\P^{a^{\star,Z,H,\chi}}} \left[ e^{-\int_0^T r_s(X_s) \mathrm{d}s } ~ \Uc_P \bigg(L(X_T) - \sum_{i=1}^N Y_T^{y,Z,H,\chi,i} \bigg) - \int_0^T e^{-\int_0^s r_s(X_u) \mathrm{d}u}  u_P(\chi_s) \mathrm{d}s \right],
\end{equation}
where $\hat\Xi_2$ is the set of $\F-$predictable processes $(\chi_t)_{t\in[0,T]}$ such that $\hat\Vc^{\chi}\neq\emptyset$, the vector $\Uc_A^{-1}(R_0)\in\R^N$ is the one whose $i-$th coordinate is $(\Uc_A^i)^{-1}(R_0^i)$ and the corresponding inequality is component-wise.

\subsection{The case of a single agent} \label{sec:single-agent}

The purpose of this subsection is to present in an easy-to-read form the results in the setting of a single agent, that is $N=1$. In the case of a single agent, the model is much more simple and there is no need to look for Nash equilibria of a stochastic differential game since there is no competition. All the results are, of course, an immediate consequence of the ones from the previous subsections. We start by presenting the standard definitions in this setting.

\medskip
When $N=1$, the weak formulation of the problem follows directly from the one developed in \Cref{sec-weak-formulation}. For a volatility function $\sigma: \Omega \times [0,T]\times \R^d \longrightarrow \mathcal{M}^{d,n}$, Borel function $\beta:[0,T]\times\R^{d}\times E\rightarrow\R^d$ and the drift coefficient $b:\Omega \times [0,T]\times \R^d \times A \longrightarrow\R^n$, satisfying the same assumptions as before, the set of admissible controls $\Ac$ is given by the one containing the $\F-$predictable processes $\alpha$ with values in a finite dimensional set $A$, such that the following process is a $\P-$martingale
\[
M_t^\alpha: = \Ec\left( \int_0^t b_s(X_s,\alpha_s) \cdot \mathrm{d}W_s  +  \int_0^t \int_{E\setminus\{0\}} \big( \lambda_s(X_s,\alpha_s,e) - 1 \big) \big( \mu_{J}(\mathrm{d}s,\mathrm{d}e) - F(\mathrm{d}e)\mathrm{d}s \big)  \right).
\]
It follows from Girsanov's theorem that the controlled process $X$ has the dynamics
\[
X_t = X_0 +  \int_0^t \sigma_s(X_s) b_s(X_s,\alpha_s) \mathrm{d}s + \int_0^t \sigma_s(X_s) \mathrm{d}W^\alpha_s +  \int_0^t \int_{E\setminus\{0\}} \beta_s(X_s,e) \vec{\mu}_{J}(\textrm{d}s,\textrm{d}e),
\]
where $W^\alpha$ is a $\P^\alpha-$Brownian motion. 
Given a contract $(\xi,\chi)$ offered by the principal, the problem of the agent is
\begin{equation} \label{eq:single-agent-problem}
V_A(\xi,\chi):= \sup_{\alpha\in\Ac} \E^{\P^\alpha} \left[ e^{-\int_0^T \rho_s(X_s,\chi_s,\alpha_s) \mathrm{d}s} ~ \Uc_A (\xi) + \int_0^T e^{-\int_0^s \rho_u(X_u,\chi_u,\alpha_u) \mathrm{d}u} \big( u_A(\chi_s) - c_s(X_s,\alpha_s) \big) \mathrm{d}s  \right].
\end{equation}

We denote by $\Ac^\star(\xi,\chi)$ the set of solutions to problem \eqref{eq:single-agent-problem}, that is the set of optimal efforts for the agent. As usual, the agent will enter into a contractual relationship with the principal if the value obtained from the contract is greater than his reservation value, denoted by $R_0$. We introduce also the dynamic version of the value of the agent, which represents the value obtained during the time interval $[\tau,T]$ and allows to define the set of admissible contracts for the principal
\begin{equation*} \label{eq:dynamic-value-single-agent}
V(\tau,\xi,\chi) := \esssup_{\alpha\in\Ac} \E^{\P^\alpha} \left[ e^{-\int_\tau^T \rho_s(X_s,\chi_s,\alpha_s) \mathrm{d}s} ~ \Uc_A (\xi) + \int_\tau^T e^{-\int_\tau^s \rho_u(X_u,\chi_u,\alpha_u) \mathrm{d}u} \big( u_A(\chi_s) - c_s(X_s,\alpha_s) \big) \mathrm{d}s ~ \bigg| ~ \Fc_\tau  \right].
\end{equation*}

\begin{definition}\label{def-admisible-contracts-single-agent}
Let $\xi$ be an $\Fc_T-$measurable random variable and $(\chi_t)_{t\in[0,T]}$ an $\F-$predictable process. We say that the pair $(\xi,\chi)$ is an admissible contract if it satisfies the following conditions
\begin{enumerate}[(i)]
\item $\forall \alpha\in\Ac$,  $\exists p_\alpha >1$ such that  $\sup_{\tau\in\Tc_{0,T}}  \E^{\P^\alpha} \left[  \big| V(\tau,\xi,\chi) \big|^{p_\alpha} \right] < \infty$.
\item $\forall \alpha\in\Ac$,  $\exists \hat p_\alpha >1$ such that $\frac{1}{p_\alpha} + \frac{1}{\hat p_\alpha}<1$ and 
\[
\sup_{\tau\in\Tc_{0,T}}  \E^{\P^\alpha} \left[ \int_0^\tau e^{-\hat p_\alpha \int_0^s \rho_u(X_u,\chi_u,\alpha_u) \mathrm{d}u} \right] < \infty.
\]
\item For every $\alpha\in\Ac$, $\tau\in\Tc_{0,T}$ we have
\[
\E^{\P^\alpha} \left[ \bigg| \int_0^\tau e^{-\int_0^s \rho_u(X_u,\chi_u,\alpha_u) \mathrm{d}u} \big( u_A(\chi_s) - c_s(X_s,\alpha_s) \big) \mathrm{d}s \bigg| \right] < \infty.
\]
\item The set  $\Ac^\star(\xi,\chi)\neq \emptyset$ and the value $V_A(\xi,\chi) \geq R_0$. 
\end{enumerate}
We denote by $\Xi$ the set of admissible contracts.
\end{definition}

\medskip
The problem of the principal, who offers to the agent an admissible contract which maximizes her own expected utility subject to his optimal action, takes the form.
\begin{equation}\label{eq:principal-problem-single-agent}
V_P:= \sup_{(\xi,\chi)\in\Xi} ~  \sup_{\alpha \in\Ac^\star(\xi,\chi)} ~  \E^{\P^{\alpha}} \left[ e^{-\int_0^T r_s(X_s) \mathrm{d}s } ~ \Uc_P \big(L(X_T) - \xi \big)  - \int_0^T e^{-\int_0^s r_s(X_u) \mathrm{d}u}  u_P(\chi_s) \mathrm{d}s \right].
\end{equation}

\subsubsection{General case}


Let us introduce the functions $F:\Omega\times[0,T]\times\R^d\times\R\times\R^d\times\Bc(\R^d;\R)\times\R\longrightarrow\R$ and $f:\Omega\times[0,T]\times\R^d\times\R\times\R^d\times\Bc(\R^d;\R)\times\R\times A \longrightarrow\R$
\begin{align*}
 f_s(x,y,z,h,k,a) &= u_A(k) -c_s(x,a) - 	\rho_s(x,k,a) y + z^\top\sigma_s(x)b_s(x,a) + \int_{\R^d\setminus\{0\}} h(u) \eta_s^a(x,\textrm{d}u), \\
 F_s(x,y,z,h,k) &= \sup_{a\in A} f_s(x,y,z,k,a).
\end{align*}
Our first assumption in this section, will be that the function $f$ is maximized at a unique point.
\begin{assumption}\label{ass-unique-max-f-single-agent}
For every $(s,x,y,z,h,k)\in [0,T]\times\R^d\times\R \times\R^d \times\Bc(\R^d;\R)\times\R$ the map $a\mapsto f_s(x,y,z,h,k,a)$ has $\P-$a.s. a unique maximizer over the set $A$, denoted by $a^\star(s,x,y,z,h,k)$.
\end{assumption}

\begin{remark}
Existence of a maximizer can be ensured if $b$, $c$, $\rho$ and $\lambda$ are continuous in $a$ and the set $A$ is compact, or alternatively if the map $-f$ is coercitive in $a$. Uniqueness of the maximizer is obtained if $f$ is strictly concave in $a$, which is the case for instance if $b$, $\rho$ and $\lambda$ are linear and $c$ is strictly convex in $a$.
\end{remark}

Consider next the following BSDE, which is linked to the solutions to the problem of the agent.
\begin{equation}\label{eq:bsde-single-agent}
Y_t = \Uc_A(\xi) + \int_t^T F_s(X_s,Y_s,Z_s,H_s,\chi_s) \mathrm{d}s - \int_t^T Z_s \cdot \mathrm{d}X_s^c - \int_t^T \int_{\R^d\setminus\{0\}} H_t(x)\mu_X(\textrm{d}s,\textrm{d}x) , \quad t\in[0,T].
\end{equation}

\begin{definition}
A solution to the BSDE \eqref{eq:bsde-single-agent} is a triple $(Y,Z,H)$ such that $Y$ is a continuous $\F-$semimartingale satisfying \eqref{eq:bsde-single-agent}, and $(Z,H)\in\V^1(X):=\H^{2,d}_\text{loc}(X)\times \L^{1}_{\text{loc}}(\mu_X)$, where
\begin{align*}
& \H^{2,d}_\text{loc}(X) = \bigg\{  \R^d-\text{valued, }\F-\text{predictable processes } Z \text{ such that } \int_0^T |\sigma_s^\top(X_s)Z_s|^2 ~ \mathrm{d}s <\infty, ~ \P-\text{a.s.}   \bigg\}, \\
& \L^{1}_{\text{loc}}(\mu_X)  = \bigg\{    \R\text{-valued, }\F\text{-predictable function } H:  \sum_{s\leq \cdot}   \bigg(\int_{\R^d\setminus\{0\}} H_s(x)\mu_{X}(\mathrm{d}t,\mathrm{d}x) \bigg)^\frac{1}{2} \text{ is }\P\text{-loc. integrable} \bigg\}.
\end{align*}
\end{definition}

Our second and last assumption in this section will be that the map $F$ provides well-possedness of the forward version of BSDE \eqref{eq:bsde-single-agent}, which will be used to obtain a convenient representation of the set of admissible contracts.

\begin{assumption}\label{ass:nice-F-single-agent}
The map $F$ is such that for every $y\in\R$ and $(Z,H)\in\V^1(X)$ there exists a unique strong solution to the following SDE
\[
Y^{y,Z,H,\chi}_t = y - \int_0^t F_s(X_s,Y^{y,Z,H,\chi}_s,Z_s,H_s,\chi_s) \mathrm{d}s + \int_0^t Z_s \cdot \mathrm{d}X^c_s + \int_0^t \int_{\R^d\setminus\{0\}} H_t(x)\mu_X(\mathrm{d}s,\mathrm{d}x), \quad t\in[0,T]. 
\]
\end{assumption}

\begin{remark}\label{rem-rho-bounded}
A sufficient condition for Assumption \ref{ass:nice-F-single-agent} is that the map $F$ is Lipschitz. This holds true for instance if $\rho$ is bounded.
\end{remark}

%
We define now the class of processes to which we will restrict our attention. 
\begin{definition} (i) For $y\in\R^N$ and $(Z,H)\in \V^1(X)$, we define the process $a^{\star,y,Z,H,\chi}$ by
\[
a_s^{\star,y,Z,H,\chi} := a^{\star}(s,X_s,Y^{y,Z,H,\chi}_s,Z_s,H_s,\chi_s), \quad \emph{d}t\otimes \emph{d}\P-\text{a.s. over } [0,T]\times \Omega.
\]
(ii) For $y\in\R^N$, we denote by $\Vc^{y,\chi}$ the following class of processes

\begin{align*}
\Vc^{y,\chi}:= \bigg\{   (Z,H)\in\V^1(X):  a^{\star,y,Z,H,\chi} \in\Ac; ~ Y_T^{y,Z,H,\chi} \in \mathrm{Im}(\Uc_A) ~ \P-\mathrm{a.s.}  \text{ and }   \forall \alpha\in\Ac ~ \exists q_{\alpha} >1 ~~~~&  \\ 
~~~ \text{ such that } \frac{1}{q_{\alpha}} + \frac{1}{\hat p_{\alpha}} < 1, ~ \sup_{\tau\in\Tc_{0,T}} \E^{\P^{\alpha}}[| Y_\tau^{y,Z,H,\chi} |^{q_{\alpha}}] < \infty \bigg\}. & 
\end{align*}
\end{definition}

\begin{remark}
If the function $b$ is bounded and $\lambda$ is bounded by an $F-$integrable function, then the integrability condition in the class $\Vc^\chi$ reduces to the existence of some $p>1$ such that $\sup_{\tau\in\Tc_{0,T}} \E [|Y_\tau^{0,Z,\chi}|^{p}] < \infty$. This is a consequence of H\"older's inequality and the densities between the probability measures having finite moments of any order.
\end{remark}

\medskip
We present now the main result of this section. As usual in the literature, we establish an equivalence between solving the problem of the agent and solving BSDE \eqref{eq:bsde-single-agent}. 

\begin{proposition}\label{prop-single-agent-bsde}
For every $\alpha^\star\in\Ac^\star(\xi,\chi)$ there exists a solution $(Y,Z,H)$ to BSDE \eqref{eq:bsde-single-agent}, with $(Z,H)\in\Vc^{Y_0,\chi}$, such that 
\begin{equation}\label{eq:a-star-single-agent-bsde}
\alpha^\star_s = a^\star(s,X_s,Y_s,Z_s,H_s,\chi_s), \quad \emph{d}t\otimes\emph{d}\P-\text{a.s. over } [0,T]\times \Omega.
\end{equation}
Conversely, let $(Y,Z,H)$ be a solution to BSDE \eqref{eq:bsde-single-agent}, with $(Z,H)\in\Vc^{Y_0,\chi}$. Then the control defined by \eqref{eq:a-star-single-agent-bsde} belongs to $\Ac^\star(\xi,\chi)$.
\end{proposition}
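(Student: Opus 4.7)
The argument rests on the martingale optimality principle combined with Assumption \ref{ass:MRP-coordinates}. For any càdlàg $\F$-adapted process $Y$ and any $\alpha\in\Ac$, introduce the discounted total reward
\[
\Psi^\alpha_t := e^{-\int_0^t \rho_s(X_s,\chi_s,\alpha_s)\,\mathrm{d}s}\, Y_t + \int_0^t e^{-\int_0^s \rho_u(X_u,\chi_u,\alpha_u)\,\mathrm{d}u}\bigl(u_A(\chi_s)-c_s(X_s,\alpha_s)\bigr)\,\mathrm{d}s.
\]
An Itô--Girsanov computation shows that whenever $Y$ evolves according to the forward dynamics of Assumption \ref{ass:nice-F-single-agent} with data $(Z,H)\in\V^1(X)$ and drift $F_s(X_s,Y_s,Z_s,H_s,\chi_s)$, the $\P^\alpha$-drift of $\Psi^\alpha$ equals
\[
e^{-\int_0^t \rho_u\,\mathrm{d}u}\bigl[f_s(X_s,Y_s,Z_s,H_s,\chi_s,\alpha_s) - F_s(X_s,Y_s,Z_s,H_s,\chi_s)\bigr],
\]
which is non-positive by the definition of $F$, with equality precisely when $\alpha_s = a^\star(s,X_s,Y_s,Z_s,H_s,\chi_s)$ in view of Assumption \ref{ass-unique-max-f-single-agent}. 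This single identity drives both implications.

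\emph{From BSDE to optimality.} Let $(Y,Z,H)$ solve \eqref{eq:bsde-single-agent} with $(Z,H)\in\Vc^{Y_0,\chi}$, and set $\alpha^\star := a^\star(\cdot,X,Y,Z,H,\chi)\in\Ac$. The identity above makes $\Psi^\alpha$ a $\P^\alpha$-local supermartingale for every $\alpha\in\Ac$ and $\Psi^{\alpha^\star}$ a $\P^{\alpha^\star}$-local martingale. The integrability built into Definition \ref{def-admisible-contracts-single-agent} and into $\Vc^{Y_0,\chi}$, applied through Hölder's inequality with the coupled exponents $q_\alpha,\hat p_\alpha$ satisfying $\tfrac{1}{q_\alpha}+\tfrac{1}{\hat p_\alpha}<1$, upgrades both processes to true (super)martingales of class (D) on $[0,T]$ via a standard localisation. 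Comparing expectations at $t=T$ then yields $Y_0\geq U_0(\alpha,\xi,\chi)$ for every $\alpha$, with equality at $\alpha^\star$, so $\alpha^\star\in\Ac^\star(\xi,\chi)$ and $V_A(\xi,\chi)=Y_0$.

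\emph{From optimality to BSDE.} Conversely, given $\alpha^\star\in\Ac^\star(\xi,\chi)$, set $Y_t := V(t,\xi,\chi)$ and pick a càdlàg version, whose existence follows from standard arguments on families of essential suprema. Dynamic programming combined with the integrability in Definition \ref{def-admisible-contracts-single-agent} shows that $\Psi^\alpha$ is a $\P^\alpha$-supermartingale for every $\alpha\in\Ac$ and $\Psi^{\alpha^\star}$ is a $\P^{\alpha^\star}$-martingale. Equivalence of $\P$ and $\P^\alpha$ transfers the special semimartingale property to $\P$, and Assumption \ref{ass:MRP-coordinates} applied to its martingale part produces $(Z,H)\in\V^1(X)$ together with a predictable finite-variation drift $\phi$ satisfying
\[
\mathrm{d}Y_t = -\phi_t\,\mathrm{d}t + Z_t\,\mathrm{d}X_t^c + \int_{\R^d\setminus\{0\}} H_t(x)\,\mu_X(\mathrm{d}t,\mathrm{d}x).
\]
Rewriting the $\P^\alpha$-drift of $\Psi^\alpha$ as above, the supermartingale constraint forces $\phi_s \geq f_s(\cdot,a)$ for every $a\in A$, and the martingale constraint at $\alpha^\star$ forces equality there; hence $\phi_s = F_s$ and, by uniqueness in Assumption \ref{ass-unique-max-f-single-agent}, $\alpha^\star_s = a^\star(s,X_s,Y_s,Z_s,H_s,\chi_s)$ as required.

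\emph{Main difficulty.} The delicate point is the local-to-true (super)martingale upgrade. Because $\rho$ need not be bounded the discount factor has no \emph{a priori} control, because $\Sigma$ may be degenerate BMO-type estimates are unavailable, and the jump integrand $H$ is only locally integrable with respect to $\mu_X$. The coupled integrability conditions in Definition \ref{def-admisible-contracts-single-agent} and in $\Vc^{y,\chi}$ are tailored to close exactly this gap: they permit a Hölder argument on the exponents $q_\alpha,\hat p_\alpha$ that converts the uniform-in-stopping-time moment bound on $Y_\tau$ into uniform integrability of $\Psi^\alpha_\tau$, at which point optional sampling along a localising sequence covering $H\in\L^1_{\text{loc}}(\mu_X)$ concludes.
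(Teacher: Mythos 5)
Your proposal is correct and follows essentially the same route as the paper's proof (which is the $N=1$ specialization of Proposition \ref{prop-agent-bsde}): the martingale optimality principle applied to the discounted reward process $\Psi^\alpha$ (the paper's $R^\alpha$), with the dynamic programming principle and aggregation giving the supermartingale/martingale dichotomy, Assumption \ref{ass:MRP-coordinates} plus It\^o--Girsanov identifying the drift with $f-F$, the uniqueness in Assumption \ref{ass-unique-max-f-single-agent} pinning down $\alpha^\star=a^\star$, and the coupled exponents $q_\alpha,\hat p_\alpha$ from Definition \ref{def-admisible-contracts-single-agent} and $\Vc^{Y_0,\chi}$ providing exactly the class (D) upgrade you describe. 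No substantive deviation from the paper's argument.
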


The previous proposition will allow us to reformulate the problem of the principal as a standard stochastic control problem, by restricting without loss of generality the form of the admissible contracts. The optimal response of the agent to the contracts in the class $\Vc^{\chi,y}$ follows immediately from Proposition \ref{prop-single-agent-bsde}.

\begin{corollary}\label{cor-optimal-response-single-agent-bsde}
Let $y\in\R$, $(Z,H)\in\Vc^{\chi,y}$ and $\xi= \Uc_A^{-1}( Y_T^{y,Z,H,\chi})$. Then the value of the agent is given by $V_A(\xi,\chi)=y$ and his optimal effort is given by $a^{\star,y,Z,H,\chi}$.
\end{corollary}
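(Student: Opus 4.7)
The plan is to reduce the claim to the converse direction of Proposition \ref{prop-single-agent-bsde} by running its construction in reverse. Starting from $y\in\R$ and $(Z,H)\in\Vc^{y,\chi}$, I would first invoke Assumption \ref{ass:nice-F-single-agent} to obtain the unique strong solution $Y:=Y^{y,Z,H,\chi}$ of the forward SDE, which has initial value $Y_0=y$. The definition $\xi:=\Uc_A^{-1}(Y_T^{y,Z,H,\chi})$ is legitimate because the class $\Vc^{y,\chi}$ enforces $Y_T^{y,Z,H,\chi}\in\mathrm{Im}(\Uc_A)$ $\P$-a.s. Then, rewriting the forward SDE as an integral from $t$ to $T$ shows immediately that the triple $(Y,Z,H)$ solves the BSDE \eqref{eq:bsde-single-agent} with terminal condition $\Uc_A(\xi)=Y_T$.

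Since $(Z,H)$ lies in $\Vc^{Y_0,\chi}=\Vc^{y,\chi}$, the converse part of Proposition \ref{prop-single-agent-bsde} then yields $a^{\star,y,Z,H,\chi}\in\Ac^\star(\xi,\chi)$, which is precisely the second conclusion of the corollary. For the value identity $V_A(\xi,\chi)=y$, I would apply the forward part of the same proposition with the optimal effort $\alpha^\star:=a^{\star,y,Z,H,\chi}$ just identified: this produces a BSDE solution $(\tilde Y,\tilde Z,\tilde H)$ for which the agent's dynamic value $V(\cdot,\xi,\chi)$ is identified with $\tilde Y$ (an identification that is standard and built into the proof of Proposition \ref{prop-single-agent-bsde}). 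Uniqueness of the strong solution of the forward SDE from Assumption \ref{ass:nice-F-single-agent}, applied to $(\tilde Z,\tilde H)$, forces $\tilde Y=Y$, whence $V_A(\xi,\chi)=\tilde Y_0=Y_0=y$.

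The only item worth double-checking, rather than a genuine obstacle, is the admissibility of $(\xi,\chi)$ in the sense of Definition \ref{def-admisible-contracts-single-agent}: conditions (i)--(iii) are direct transcriptions of the moment and integrability bounds built into the class $\Vc^{y,\chi}$ together with the $\hat p_\alpha$ exponential bound on $\rho$ included in the definition of $\Vc^{y,\chi}$, while condition (iv) combines the non-emptiness of $\Ac^\star(\xi,\chi)$ established above with the reservation inequality, which is automatic under the implicit assumption $y\ge R_0$ (otherwise the corollary produces a well-defined pair solving the agent's problem without asserting it is accepted). The corollary is therefore essentially a repackaging of Proposition \ref{prop-single-agent-bsde}, and the argument is a single application of its converse together with a uniqueness comparison to pin down $Y_0$.
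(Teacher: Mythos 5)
The first half of your argument — observing that $(Y^{y,Z,H,\chi},Z,H)$ is, by construction, a solution of BSDE \eqref{eq:bsde-single-agent} with terminal condition $\Uc_A(\xi)=Y_T^{y,Z,H,\chi}$ (legitimate since $\Vc^{y,\chi}$ imposes $Y_T^{y,Z,H,\chi}\in\mathrm{Im}(\Uc_A)$), and then invoking the converse part of Proposition \ref{prop-single-agent-bsde} to get $a^{\star,y,Z,H,\chi}\in\Ac^\star(\xi,\chi)$ — is exactly the paper's route. The problem is your derivation of the value identity $V_A(\xi,\chi)=y$. You apply the necessity direction of the proposition to produce a BSDE solution $(\tilde Y,\tilde Z,\tilde H)$ with $\tilde Y$ equal to the dynamic value, and then claim that ``uniqueness of the strong solution of the forward SDE from Assumption \ref{ass:nice-F-single-agent}, applied to $(\tilde Z,\tilde H)$, forces $\tilde Y=Y$.'' This step does not go through: Assumption \ref{ass:nice-F-single-agent} gives uniqueness of $Y$ \emph{for a fixed pair} $(Z,H)$ and a fixed initial value, whereas $\tilde Y$ and $Y$ are driven by \emph{a priori different} pairs $(\tilde Z,\tilde H)$ and $(Z,H)$ (the former comes out of a martingale representation under $\P^{\alpha^\star}$, and nothing identifies it with the latter). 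Identifying the two triples would require uniqueness, or a comparison theorem, for the BSDE \eqref{eq:bsde-single-agent} itself — with its non-Lipschitz, sup-type generator $F$ — which is neither assumed nor proved anywhere in the paper; indeed the paper deliberately only assumes well-posedness of the \emph{forward} equation.

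The correct (and shorter) completion is the one the paper uses for Corollary \ref{cor-optimal-response}: the value identity is already contained in part $(ii)$ of the proof of Proposition \ref{prop-single-agent-bsde}. There one shows, by the It\^o computation and the class-(D) argument, that for every $\alpha\in\Ac$,
\begin{equation*}
U_0(\alpha,\xi,\chi)= Y_0^{y,Z,H,\chi} + \E^{\P^{\alpha}}\bigg[\int_0^T e^{-\int_0^s \rho_u(X_u,\chi_u,\alpha_u)\,\mathrm{d}u}\big(f_s(X_s,Y_s,Z_s,H_s,\chi_s,\alpha_s)-F_s(X_s,Y_s,Z_s,H_s,\chi_s)\big)\,\mathrm{d}s\bigg]\leq Y_0^{y,Z,H,\chi},
\end{equation*}
with equality precisely at $\alpha=a^{\star,y,Z,H,\chi}$; hence $V_A(\xi,\chi)=Y_0^{y,Z,H,\chi}=y$ with no appeal to the necessity direction or to any uniqueness statement. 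Your closing remark on admissibility is fine but tangential: conditions $(i)$--$(iii)$ of Definition \ref{def-admisible-contracts-single-agent} are encoded in $\Vc^{y,\chi}$, and $(iv)$ only becomes relevant (via $y\geq R_0$) when the corollary is fed into the principal's reformulation, not for the two conclusions stated here.
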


By combining all the results in this section, we obtain the reformulation of the problem of the principal. 
\begin{equation*} \label{eq:reformulated-principal-single-agent-bsde}
V_P = \sup_{y \geq R_0, \chi\in\Xi_2} \sup_{(Z,H)\in\Vc^{y,\chi}} ~  \E^{\P^{a^{\star,y,Z,H,\chi}}} \left[ e^{-\int_0^T r_s(X_s) \mathrm{d}s } ~ \Uc_P \bigg(L(X_T) - \Uc_A^{-1}(Y_T^{y,Z,H,\chi}) \bigg) - \int_0^T e^{-\int_0^s r_s(X_u) \mathrm{d}u}  u_P(\chi_s) \mathrm{d}s \right].
\end{equation*}

\subsubsection{The case of exponential utility} \label{sec:exp-utility-single-agent-bsde}

Assume now that $u_A, c \equiv 0$ and $\Uc_A(x)=-\exp(-R_A x)$, with $R_A > 0$. In the case of exponential utility the map $\rho$ usually takes the form $\rho_s(x,k,a)=R_A(\hat{u}_A(s,x,k)-\hat{c}_s(x,a))$, for some utility and cost functions $\hat u_A$ and $\hat c$. However, we do not impose $\rho$ to have this form. 

\medskip
In this setting we can actually obtain an alternative representation of the admissible contracts, by using the certainty equivalent process of the agent. To this end, we introduce the functions $G:\Omega\times[0,T]\times\R^d\times\R^d\times\Bc(\R^d;\R)\times\R \longrightarrow\R$ and $g:\Omega\times[0,T]\times\R^d\times\R^d\times\Bc(\R^d;\R)\times\R \times A \longrightarrow\R$ defined by 
\begin{align*}
g_s(x,z,h,k,a) &=   	\frac{1}{R_A}\rho_s(x,k,a) + z^\top\sigma_s(x)b_s(x,a) - \frac{1}{2}R_A ||\sigma_s(x)^\top z||^2 + \frac{1}{R_A}   \int_{\R^d\setminus\{0\}}\big(1-e^{R_A h(u)})\eta_t^{a}(x,\textrm{d}u), \\
G_s(x,z,h,k)  &=  \sup_{a\in A} g_s(x,z,h,k,a). 
\end{align*}
Similar to the previous section, we assume that the function $g$ is maximized at a unique point.
\begin{assumption}\label{ass-unique-max-g-single-agent}
For every $(s,x,z,h,k)\in [0,T]\times\R^d\times\R^d\times\Bc(\R^d;\R)\times\R$ the map $a\mapsto g_s(x,z,h,k,a)$ has $\P-$a.s. a unique maximizer over the set $A$, denoted by $\hat a^\star(s,x,z,h,k)$.
\end{assumption}

Consider next the following BSDE
\begin{equation}\label{eq:bsde-single-agent-cert-equiv}
Y_t = \xi + \int_t^T G_s(X_s,Z_s,H_s,\chi_s) \mathrm{d}s - \int_t^T Z_s \cdot \mathrm{d}X^c_s + \int_t^T \int_{\R^d\setminus\{0\}} H_s(x)\mu_X(\textrm{d}t,\textrm{d}x) , \quad t\in[0,T].
\end{equation}
For $y\in\R$ and $Z\in\H^{2,d}_\text{loc}(X)$, we define the process $Y^{y,Z,\chi}$ as follows\footnote{Note this is a direct definition since $Y$ does not appear in the right-hand side.}
\[
Y^{y,Z,H,\chi}_t := y - \int_0^t G_s(X_s,Z_s,H_s,\chi_s) \textrm{d}s + \int_0^t Z_s \cdot \mathrm{d}X^c_s - \int_0^t \int_{\R^d\setminus\{0\}} H_s(x)\mu_X(\textrm{d}t,\textrm{d}x) , \quad t\in[0,T]. 
\]
We define now the class of processes to which we will restrict our attention. In the previous section we reformulated the problem of the principal through classes of processes associated to a fixed value of the agent. In the setting of exponential utility, this reformulation can be achieved through a single class of processes.

\begin{definition} We denote by $\hat \Vc^\chi$ the class of processes $(Z,H)\in\V^1(X)$ satisfying

\medskip
$(i)$ $\forall \alpha\in\Ac, ~\exists q_\alpha >1$ such that $\frac{1}{q_\alpha} + \frac{1}{\hat p_\alpha} < 1$ and $\sup_{\tau\in\Tc_{0,T}} \E^{\P^\alpha}[|\Uc_A(Y_\tau^{0,Z,H,\chi})|^{q_\alpha}] < \infty$.

$(ii)$ The process $a^{\star,Z,H,\chi} \in\Ac$, where
\[
a^{\star,Z,H,\chi}_s := \hat a^\star(s,X_s,Z_s,H_s,\chi_s), \quad \emph{d}t\otimes\emph{d}\P-\text{a.s. over } [0,T]\times \Omega.
\]
\end{definition}

We have then an alternative representation of the set $\Ac^\star(\xi,\chi)$, which will be used in this setting. The following is an analogous to Proposition \ref{prop-single-agent-bsde}. 

\begin{proposition}\label{prop-single-agent-bsde-cert-equiv}
For every $\alpha^\star\in \Ac^\star(\xi,\chi)$ there exists a solution $(Y,Z,H)$ to BSDE \eqref{eq:bsde-single-agent-cert-equiv}, with $(Z,H)\in\hat\Vc^\chi$, such that
\begin{equation}\label{eq:a-star-single-agent-bsde-cert-equiv}
\alpha^\star_s = a^{\star,Z,H,\chi}_s , \quad \emph{d}t\otimes\emph{d}\P-\text{a.s. over } [0,T]\times \Omega.
\end{equation}
Conversely, let $(Y,Z,H)$ be a solution to BSDE \eqref{eq:bsde-single-agent-cert-equiv}, with $(Z,H)\in\hat\Vc^\chi$. Then the control $a^{\star,Z,H,\chi}$ belongs to $\Ac^\star(\xi,\chi)$.
\end{proposition}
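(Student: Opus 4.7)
The strategy is to reduce the statement to Proposition \ref{prop-single-agent-bsde} via the exponential change of variables $V_t := \Uc_A(Y_t) = -e^{-R_A Y_t}$, which is a bijection identifying the certainty equivalent process with the continuation value of the agent. Under this bijection the terminal conditions match ($V_T = \Uc_A(\xi)$), and solutions to BSDE \eqref{eq:bsde-single-agent-cert-equiv} correspond bijectively to solutions of BSDE \eqref{eq:bsde-single-agent} with the integrands related by
\[
\tilde Z_t = -R_A V_{t-} Z_t, \qquad \tilde H_t(x) = V_{t-}\big(e^{R_A H_t(x)} - 1\big),
\]
where the tildes denote the integrands appearing in \eqref{eq:bsde-single-agent}. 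Since $V < 0$, the map $h \mapsto V(e^{R_A h} - 1)$ is a strictly monotone bijection, so the unique maximizer $a^\star$ of $f$ coincides with the unique maximizer $\hat a^\star$ of $g$ after substitution.

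For the forward direction, given $\alpha^\star \in \Ac^\star(\xi,\chi)$, Proposition \ref{prop-single-agent-bsde} provides $(V,\tilde Z,\tilde H)$ solving \eqref{eq:bsde-single-agent} with $V_T = \Uc_A(\xi) < 0$ and $\alpha^\star_s = a^\star(s, X_s, V_s, \tilde Z_s, \tilde H_s, \chi_s)$. Since $\Uc_A$ takes negative values, $V < 0$ and we may set $Y := -\frac{1}{R_A}\log(-V)$. Applying Itô's formula to $y \mapsto -e^{-R_A y}$ and matching the continuous martingale and jump components of $dV$ against $\tilde Z\, dX^c$ and $\int \tilde H\, d\mu_X$ identifies $Z$ and $H$ via the correspondence above. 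The Brownian quadratic correction $-\tfrac12 R_A^2|Z^\top\sigma|^2\,dt$ together with the jump correction $\int[1-e^{R_A H(x)} + R_A H(x)]\,\mu_X(dt,dx)$ combine with the drift $F$ so that, after dividing by $-R_A V > 0$, the drift of $Y$ is exactly $G_s(X_s, Z_s, H_s, \chi_s)$; hence $Y$ solves \eqref{eq:bsde-single-agent-cert-equiv} and $\alpha^\star = a^{\star,Z,H,\chi}$. Moreover, the integrability condition defining $\Vc^{V_0,\chi}$ rewrites as the one defining $\hat\Vc^\chi$ since $|V_\tau^{V_0,\tilde Z,\tilde H,\chi}| = e^{-R_A y}\,|\Uc_A(Y_\tau^{0,Z,H,\chi})|$ up to the deterministic constant $e^{-R_A y}$.

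The converse is the symmetric computation: starting from $(Y, Z, H) \in \hat\Vc^\chi$ solving \eqref{eq:bsde-single-agent-cert-equiv}, set $V := -e^{-R_A Y}$, run Itô's formula in the reverse direction to produce $(\tilde Z, \tilde H)$ making $(V, \tilde Z, \tilde H)$ a solution of \eqref{eq:bsde-single-agent} in $\Vc^{V_0,\chi}$, and apply Proposition \ref{prop-single-agent-bsde} to conclude that $a^{\star,Z,H,\chi} = \hat a^\star = a^\star \in \Ac^\star(\xi,\chi)$. The main obstacle is the bookkeeping of the Itô jump correction: the uncompensated integral $\int H\, d\mu_X$ in \eqref{eq:bsde-single-agent-cert-equiv} (with opposite sign to \eqref{eq:bsde-single-agent}) must be composed with $y\mapsto -e^{-R_A y}$ to produce the non-linearity $\frac{1}{R_A}\int(1-e^{R_A h(u)})\eta_s^{a}(x,du)$ inside $G$, while simultaneously showing that the pointwise maximizer of $f$ at $(\tilde z,\tilde h)$ equals $\hat a^\star$ at $(z,h)$. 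A minor subtlety is that $\hat\Vc^\chi$ is defined independently of the initial value $y$ while $\Vc^{y,\chi}$ is not; this is consistent because shifting $Y$ by $y$ multiplies $V$ by the positive constant $e^{-R_A y}$, which leaves all admissibility conditions unchanged.
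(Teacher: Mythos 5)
Your route is genuinely different from the paper's. The paper does not deduce this result from Proposition \ref{prop-single-agent-bsde}: it proves the certainty-equivalent statement directly (as the $N=1$ case of Proposition \ref{prop-agent-bsde-cert-equiv}), via the dynamic programming principle (making $V(\cdot)\,e^{-\int_0^\cdot\rho}$ a supermartingale for every admissible effort and a martingale for $\alpha^\star$), the multiplicative (Yoeurp) decomposition of this negative martingale combined with martingale representation to get an exponential representation, and It\^o applied to $-\frac{1}{R_A}\log(-V)$ to produce BSDE \eqref{eq:bsde-single-agent-cert-equiv}; the converse is a class-(D) verification with $U^{\alpha}=e^{-\int_0^\cdot\rho}\,\Uc_A(Y)$. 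Your computations are right and are in substance the same It\^o manipulations: the correspondence $\tilde Z=-R_AV_{-}Z$, $\tilde H=V_{-}(e^{R_AH}-1)$, the identity $f=-R_A y\big(g+\tfrac12R_A\|\sigma^\top z\|^2\big)$ for $y<0$ (hence $F=-R_AV(G+\tfrac12R_A\|\sigma^\top Z\|^2)$ and coincidence of the maximizers), and the matching of the admissibility classes through $|V_\tau|=e^{-R_Ay}|\Uc_A(Y^{0,Z,H,\chi}_\tau)|$. What your reduction buys is economy (no second pass through the DPP/supermartingale machinery); what the paper's direct proof buys is that the exponential section stands on its own assumptions.

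That last point is where your proposal, as written, has a real gap: Proposition \ref{prop-single-agent-bsde} is established under Assumptions \ref{ass-unique-max-f-single-agent} and \ref{ass:nice-F-single-agent}, which are not among the standing hypotheses of the exponential subsection (only Assumption \ref{ass-unique-max-g-single-agent} is). Your transfer of uniqueness of the maximizer works only for $y<0$ and for $\tilde h$ in the range $(-\infty,-y)$ of the map $h\mapsto y(e^{R_Ah}-1)$; for $y\geq 0$ maximizing $f$ amounts to minimizing $g$ and Assumption \ref{ass-unique-max-f-single-agent} can simply fail, and without Assumption \ref{ass:nice-F-single-agent} the objects $Y^{y,Z,H,\chi}$, $a^{\star,y,Z,H,\chi}$ and hence the class $\Vc^{y,\chi}$ are not even defined. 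So you cannot cite the general proposition as a black box: you must re-run its proof restricted to the negative half-line (which is, in effect, what the paper's direct argument does). A second, smaller point concerns the forward direction: the martingale representation only guarantees $V_{-}+\tilde H(\Delta X_t)=V_t<0$ at actual jump times; to define $H=\frac{1}{R_A}\log\big((V_{-}+\tilde H)/V_{-}\big)$ as an element of $\L^1_{\mathrm{loc}}(\mu_X)$ and to use it inside the integrals against $\eta^{a}$ in $G$, you also need $V_{-}+\tilde H(x)<0$ for compensator-a.e.\ $x$, which requires modifying $\tilde H$ on a predictable null set (or, as in the paper, using the multiplicative decomposition, which delivers the representation already in exponential form with jumps strictly above $-1$). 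Both points are fixable, but they are precisely the steps your "matching of components" sentence glosses over.
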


We present next the analogous result to Corollary \ref{cor-optimal-response-single-agent-bsde}.

\begin{corollary}\label{cor-optimal-response-single-agent-bsde-cert-equiv}
Let $y\in\R$, $(Z,H)\in\hat\Vc^\chi$ and $\xi=Y_T^{y,Z,H,\chi}$. Then the value of the agent is given by $V_A(\xi,\chi)=\Uc_A(y)$ and his optimal effort is given by $a^{\star,Z,H,\chi}$.
\end{corollary}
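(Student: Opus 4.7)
The plan is to deduce the corollary directly from Proposition \ref{prop-single-agent-bsde-cert-equiv}. First I would observe that the process $Y^{y,Z,H,\chi}$, defined forward from the initial condition $Y^{y,Z,H,\chi}_0 = y$, can be rewritten backward. Indeed, setting $\xi := Y_T^{y,Z,H,\chi}$ and subtracting the forward definition at $t$ from the same expression evaluated at $T$ shows that the triple $(Y^{y,Z,H,\chi}, Z, H)$ is a solution of BSDE \eqref{eq:bsde-single-agent-cert-equiv} with terminal condition $\xi$, with $Y^{y,Z,H,\chi}$ playing the role of $Y$.

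Since by hypothesis $(Z,H)\in\hat\Vc^\chi$, the integrability and admissibility conditions required in the converse statement of Proposition \ref{prop-single-agent-bsde-cert-equiv} are automatically satisfied. Applying that proposition therefore yields that $a^{\star,Z,H,\chi}$ belongs to $\Ac^\star(\xi,\chi)$, which proves the optimality claim. It also remains to check that $(\xi,\chi)\in\Xi$, but this is a routine verification using the integrability assumptions embedded in the definition of $\hat\Vc^\chi$, Definition \ref{def-admisible-contracts-single-agent}, and the fact that $\xi$ inherits integrability from $Y_T^{y,Z,H,\chi}$ via the exponential transformation.

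For the value identification, I would recall that BSDE \eqref{eq:bsde-single-agent-cert-equiv} is constructed precisely so that $Y_t$ is the certainty equivalent of the agent under his optimal response, i.e., $\Uc_A(Y_t)$ is the agent's continuation utility at time $t$. More concretely, applying It\^o's formula to $\Uc_A(Y_t)e^{-\int_0^t \rho_s(X_s,\chi_s,a^{\star,Z,H,\chi}_s)\mathrm{d}s}$ and using the definition of $G$ as the pointwise maximum of $g$ (attained at $\hat a^\star$), one verifies that this process is a $\P^{a^{\star,Z,H,\chi}}$-martingale; this verification is exactly the core computation behind Proposition \ref{prop-single-agent-bsde-cert-equiv}, so I would simply invoke it rather than redo it. Taking the expectation under $\P^{a^{\star,Z,H,\chi}}$ between $0$ and $T$, using $u_A \equiv c \equiv 0$ for the exponential utility case of Section \ref{sec:exp-utility-single-agent-bsde}, and substituting $\xi = Y_T^{y,Z,H,\chi}$ together with $Y^{y,Z,H,\chi}_0 = y$, gives $V_A(\xi,\chi) = \Uc_A(y)$.

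The main (and only non-mechanical) point is checking that $(\xi,\chi)$ is indeed admissible in the sense of Definition \ref{def-admisible-contracts-single-agent}, so that the statement $V_A(\xi,\chi)=\Uc_A(y)$ is even meaningful. I expect this to be immediate from the integrability condition $\sup_{\tau\in\Tc_{0,T}} \E^{\P^{\alpha}}[|\Uc_A(Y_\tau^{0,Z,H,\chi})|^{q_\alpha}] < \infty$ built into the definition of $\hat\Vc^\chi$, combined with the translation invariance property $Y^{y,Z,H,\chi} = Y^{0,Z,H,\chi} + y$ that holds because $y$ enters only as an additive constant in the forward equation defining $Y^{y,Z,H,\chi}$. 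Everything else is an immediate consequence of the preceding proposition.
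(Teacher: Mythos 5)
Your proof is correct and takes essentially the same route as the paper: note that $(Y^{y,Z,H,\chi},Z,H)$ solves BSDE \eqref{eq:bsde-single-agent-cert-equiv} with terminal condition $\xi=Y_T^{y,Z,H,\chi}$, invoke the converse part of Proposition \ref{prop-single-agent-bsde-cert-equiv} to get optimality of $a^{\star,Z,H,\chi}$, and obtain $V_A(\xi,\chi)=\Uc_A(Y_0^{y,Z,H,\chi})=\Uc_A(y)$ from the martingale/supermartingale computation in part $(ii)$ of that proposition's proof. Your additional check that $(\xi,\chi)$ is admissible via the integrability built into $\hat\Vc^\chi$ and the translation property $Y^{y,Z,H,\chi}=y+Y^{0,Z,H,\chi}$ is a point the paper leaves implicit, and is handled correctly.
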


To conclude this section, we obtain the reformulation of the problem of the principal. 
\begin{equation} \label{eq:reformulated-principal-single-agent-bsde-cert-equiv}
V_P = \sup_{ y \geq \Uc_A^{-1}(R_0) } \sup_{\chi\in\hat\Xi_2} \sup_{(Z,H)\in\hat\Vc^{\chi}} ~  \E^{\P^{a^{\star,Z,H,\chi}}} \left[ e^{-\int_0^T r_s(X_s) \mathrm{d}s } ~ \Uc_P \bigg(L(X_T) - Y_T^{y,Z,H,\chi} \bigg) - \int_0^T e^{-\int_0^s r_s(X_u) \mathrm{d}u}  u_P(\chi_s) \mathrm{d}s \right].
\end{equation}

\section{Applications in the literature} \label{sec:examples}

The purpose of this section is to show that our set of assumptions allows to treat directly different papers in the literature and to recover their results. The main point is that our weaker assumptions are satisfied in those works and therefore we can apply our main results to characterize the equilibrium (solution) to the agents' game (problem) and to reformulate the problem of the principal.

\subsection{H\"olmstrom and Milgrom (1987)}

In the continuous model by \citeauthor*{holmstrom1987aggregation} \cite{holmstrom1987aggregation}, the $d-$dimensional output controlled by the agent takes the form
\[
\textrm{d}X_t = \alpha_t \textrm{d}t + \Sigma^{\frac{1}{2}}\textrm{d}W^\alpha_t,
\]
where $W$ is an $n-$dimensional standard Brownian motion. The agent's cost function $c:A\rightarrow\R$ is assumed to be convex, continuously differentiable and with bounded derivative. $A\subset\R^d$ is assumed to be open and compact. The principal offers the agent the terminal remuneration $\xi$, which is an $\F_T^X$-adapted random variable, and the problem of the agent is given by
\[
V_0(\xi) = \sup_{\alpha\in\Ac} \E^{\P^\alpha}\bigg[ u\bigg(\xi-\int_0^T c(\alpha_t)\textrm{d}t\bigg)\bigg],
\]
with the utility function $u(y)=-\exp(-ry)$. 

\medskip
Our formulation recovers the problem of the agent with exponential utility by choosing $u_A=c=0$ and $\rho(a)=-rc(a)$. Since the effort is assumed to be bounded, the conditions for a contract to be admissible (Definition \ref{def-admisible-contracts-single-agent}) can be checked only under the initial measure $\P$. Condition $(iii)$ is automatically satisfied and $(ii)$ holds since $c$ is continuous and the effort is bounded. Similarly, condition $(i)$ reduces to the contract being sufficiently integrable and $(iv)$ is the usual incentive compatibility condition.

\medskip
The result \cite[Theorem 6]{holmstrom1987aggregation} says that if an agent is offered the contract $\xi$ and certainty equivalent $y$, then a necessary condition for the optimality of the control $\alpha^\star$ is that
\[
\xi = y + \int_0^T \bigg( c(\alpha^\star) - c'(\alpha^\star_t)^\top \alpha^\star_t + \frac{r}{2} c'(\alpha^\star_t)^\top \Sigma c'(\alpha^\star_t) \bigg) \textrm{d}t + \int_0^T c'(\alpha^\star_t)^\top  \textrm{d}X_T,
\]
which we can deduce from Proposition \ref{prop-single-agent-bsde-cert-equiv}, given that the optimality condition \eqref{eq:a-star-single-agent-bsde-cert-equiv} characterizes the optimal control through $Z_t = c'(\alpha^\star_t)$. Our result extends \cite[Theorem 6]{holmstrom1987aggregation} by establishing also the sufficiency of the previous representation,  that \cite{holmstrom1987aggregation} conjectures but only states for constant controls.

\subsection{Capponi and Frei (2015)}

In the discontinuous model by \citeauthor*{capponi2015dynamic} \cite{capponi2015dynamic}, the agent controls a diffusion with jumps which suffers the shocks from a compound Poisson process $J_t = \sum_{i=1}^{N_t} Y_i$, where $(Y_i)$ is a sequence of bounded non-negative i.i.d. random variables with cumulative distribution function $F$. The agent controls $\alpha=(u,\lambda)$, where $\lambda_t$ is the instantaneous intensity of $N_t$ and the output has the dynamics
\[
\textrm{d}X_t = (a_t + u_t) \textrm{d}t + \sigma_t \textrm{d}W_t^\alpha - \int_{\R_+} e \ \mu_J(\textrm{d}e,\textrm{d}t).
\]
The problem of the agent, when offered a terminal remuneration $\xi$ at a random time $\tau$ by the principal, is given by
\[
V_0(\xi) = \sup_{\alpha\in\Ac} \E^{\P^\alpha}\bigg[ u\bigg(\xi-\int_0^\tau c(\alpha_t)\textrm{d}t\bigg)\bigg],
\]
with the utility function $u(y)=-\exp(-ry)$ and the cost function $c$ twice continuously differentiable and strictly convex. The control $u$ is admissible if it is $(\Fc_t)-$predictable and bounded, whereas $\lambda$ is asked to be $(\Fc_t)-$predictable and bounded away from zero and infinity. Notice that under $\P^\alpha$ the compensator of $\mu_J(\textrm{d}e,\textrm{d}t)$ is given by $\lambda_t F(\textrm{d}e)\textrm{d}t$.

\medskip
The problem with exponential utility is covered by our model by choosing $u_A=c=0$ and $\rho(a)=-rc(a)$. Since the efforts are bounded, the admissibility conditions for the contracts in Definition \ref{def-admisible-contracts-single-agent} can be checked only under the initial probability $\P$. Conditions $(iv)$ and $(iii)$ are directly satisfied, while $(i)$ and $(iv)$ reduce to the integrability of the contract and the incentive compatibility conditions.

\medskip
The result \cite[Lemma 3.1]{capponi2015dynamic} says that the optimal contract, which gives certainty $y$ to the agent, takes the form
\[
\xi^\star = y + \int_0^\tau Z_t \textrm{d}X_t^c + \int_0^\tau \bigg( \frac{r}{2} Z_t^2\sigma_t^2 + \tilde H_t \lambda^\star_t m + c(\alpha^\star_t) - Z_t(a_t+u^\star_t) \bigg) \textrm{d}t  - \frac{1}{r} \sum_{0\leq t <\tau} \log\big( 1+r \tilde H_t \Delta J_t \big), 
\]
with $m=\int_{\R_+} y \ \textrm{d}F(y)$ and for some predictable processes $Z$ and $\tilde H$. Notice that this characterization is the same as the one provided by Proposition \ref{prop-single-agent-bsde-cert-equiv}, if we make the reverse change of variables $H_t(x):=\frac{1}{r} \log\big( 1+\tilde H_t(x)\big) $ and we assume that $\tilde H(x)=\tilde H_t x$ is linear. Indeed, in such case we have the equality (for the last term of the generator $g$)
\[
\int_{\R_+} \tilde H_t (x) \nu_t^{\alpha_t}(\textrm{d}x) = \int_{\R_+} \tilde H_t x \ \lambda_t F(\textrm{d}x)\textrm{d}t = \tilde H_t \lambda_t m. 
\]
However, we believe there is no reason to assume that the function $H_t$ (coming from the martingale representation) is linear for a general compound Poisson process and such assumption would impose a restriction over the set of admissible contracts. Therefore our result corrects the one presented in \cite[Lemma 3.1]{capponi2015dynamic}.

\medskip
Another comment is that for the reformulation of the problem of the principal, it is assumed in \cite{capponi2015dynamic} that $Z$ is bounded and $H$ is non-negative and bounded. As the authors say, this imposes a further restriction on the class of admissible contracts since those conditions are stronger than the ones defining the class $\hat \Vc$.

\subsection{Elie and Possama\"i (2019)}

\citeauthor*{elie2019contracting} \cite{elie2019contracting} study the continuous multi-agent problem with $N$ competitive agents. In their work, the output $X$ has the dynamics
\[
\textrm{d}X_t = b_t(X_t,\alpha_t) \textrm{d}t + \Sigma_t \textrm{d}W_t^\alpha.
\]
The best response of the agent $i$ to the action $\alpha^{-i}$ of the others, when offered a terminal remuneration $\xi^i$ by the principal, is given by
\[
V_0^i(\alpha^{-i},\xi^i) = \sup_{\alpha^i\in\Ac^i(\alpha^{-i})} \E^{\P^\alpha}\bigg[ u_i\bigg(\xi^i+\Gamma_i(X_T)-\int_0^T c_t(X_t,\alpha^i_t)\textrm{d}t\bigg)\bigg],
\]
with the utility function $u_i(y)=-\exp(-r_i y)$ and the map $\Gamma_i$ representing a comparison made by the agent $i$ of his performance with the others. Notice that the game between the agents $i$ falls into our setting by considering $\hat\xi^i = \xi^i+\Gamma_i(X_T)$. 

\medskip
The assumptions in \cite{elie2019contracting} are pretty strong and our model relaxes some of them and removes the unnecessary ones. The admissible efforts are assumed to be such that the density measures have finite moments of any order, in which case the admissibility conditions for the contracts can be checked only under the initial probability $\P$. They are also assumed to be such that $\int_0^T c_t(X_t,\alpha^i_t)\textrm{d}t$ belongs to the Morse-Transue space $M^\phi(\R^N)$, which implies condition $(ii)$ in Definition \ref{def-admisible-contracts}. As the problem is of exponential utility, condition $(iii)$ is trivially satisfied. The admissible contracts $\xi$ are assumed to belong to $M^\phi(\R^N)$ which is more restrictive than condition $(i)$.  

\medskip
For such admissible contracts, \cite[Theorem 4.1]{elie2019contracting} establishes a one-to-one correspondence between sufficiently integrable Nash equilibria $\alpha^\star\in\Ac^\star(\xi)$ and solutions $(Y,Z)$ to BSDE \eqref{eq:bsde-agent-cert-equiv} such that $Z\in\H^2_{BMO}(\Mc^{N,N})$. In the case $N=1$, such condition on $Z$ implies the boundedness of $Y$ and therefore it is a stronger condition than $Z\in\hat\Vc$, (see for instance \cite[Proposition 2.2]{briand2013simple}). In the multidimensional case we conjecture the same to be true, so their characterization of Nash equilibria holds over a smaller class.

\subsection{El Euch, Mastrolia, Rosenbaum and Touzi (2020)}

This paper does not fit exactly into our setting due to the agent having a different objective function. However, we show how to adapt our method and to derive the corresponding BSDE associated to the problem of the agent. 

\medskip
\citeauthor*{euch2021optimal} \cite{euch2021optimal} consider a single agent controlling a process $X=(X^S,X^a,X^b)\in\R^3$. Here, $X_t^S=X_0^S + \sigma W_t$ is an uncontrolled continuous diffusion, $X^a_t=N_t^a$ and $X^b_t=N_t^b$ are independent Poisson processes whose intensities are controlled. Namely, the agent chooses an effort $\alpha=(\alpha^a,\alpha^b)$, uniformly bounded, which results in intensities $\lambda(\delta_t^a)$ and $\lambda(\delta_t^b)$ respectively for $N_t^a$ and $N_t^b$. The problem of the agent is given by
\[
V_0(\xi) = \sup_{\alpha\in\Ac} \E^{\P^\alpha}\bigg[ u\bigg(\xi+\int_0^T \big( \alpha_t^a \textrm{d}N_t^a + \alpha_t^b \textrm{d}N_t^b + (N_t^b-N_t^a) \textrm{d}S_t \big)  \bigg)\bigg],
\]
with the utility function $u(y)=-\exp(-R_A y)$. This problem falls into the exponential setting of Section \ref{sec:exp-utility-single-agent-bsde} if we make the replacement in \eqref{eq:agent-problem}
\[
\int_0^T \rho_s(X_s,\alpha_s)\textrm{d}s \mapsto \text{ replace by }  R_A \bigg(\int_0^T \rho^S(X_s)\textrm{d}X_s^c + \int_0^T\int_\R \rho^a(\alpha_s) \mu_{X^a}(\textrm{d}s,\textrm{d}u) + \int_0^T \int_\R \rho^b(\alpha_s) \mu_{X^b}(\textrm{d}s,\textrm{d}u) \bigg),
\]
with the functions 
\[
\rho^S(x)=(x^b-x^a,0,0)^\top, ~ \rho^a(\alpha)=\alpha^a, ~ \rho^b(\alpha)=\alpha^b.
\]
Of course, to adapt our result to this new setting we would need to update the conditions for the admissible contracts. In this case condition $(ii)$ in Definition \ref{def-admisible-contracts-single-agent} becomes 
\[
\sup_{\tau\in\Tc_{0,T}} \E^\P\bigg[ e^{-\hat p_\alpha R_A \big(\int_0^\tau \rho^S(X_s)\textrm{d}X_s^c + \int_0^\tau \int_\R \rho^a(\alpha_s) \mu_{X^a}(\textrm{d}s,\textrm{d}u) + \int_0^\tau \int_\R \rho^b(\alpha_s) \mu_{X^b}(\textrm{d}s,\textrm{d}u) \big) } \bigg] < \infty,
\]
which holds true because $N_t^b-N_t^a$ is assumed to be bounded, as well as the intensities of the Poisson processes. Similarly, the integrability condition needs to be checked only under the initial probability $\P$ because the probability densities have finite moments of any order. Condition $(iii)$ is satisfied trivially in the setting of exponential utility and condition $(i)$ matches the integrability of the contract required in \cite{euch2021optimal}. 

\medskip
The martingale representation in this setting takes the form
\[
Z_t \textrm{d}X_t^c + \int_\R H_s^a(u) \mu_{X^a}(\textrm{d}s,\textrm{d}u) + \int_\R H_s^b(u) \mu_{X^b}(\textrm{d}s,\textrm{d}u) = Z_t^S \textrm{d}S_t+\tilde Z_t^a\textrm{d}N_t^a+\tilde Z_t^b\textrm{d}N_t^b,
\]
by defining $\tilde Z_t^a = H_t^a(1)$, $\tilde Z_t^b = H_t^b(1)$. Combining this with the form of $\rho$ and by putting the corresponding terms together we obtain the generator
\begin{align*}
g(x,z,h,\alpha)  & =   	- \frac{1}{2}R_A ||\sigma^\top (z+\rho^S(x))||^2 + \frac{1}{R_A}  \int_{\R\setminus\{0\}}\big(1-e^{R_A ( h^a(u)-\rho^a(\alpha))} \big) \lambda(\delta_s^a) \eta^{a}(\textrm{d}u)  \\
& ~ ~~ +  \frac{1}{R_A} \int_{\R\setminus\{0\}}\big(1-e^{R_A ( h^b(u)-\rho^b(\alpha))}\big)\lambda(\delta_s^b)\eta^{b}(\textrm{d}u), 
\end{align*}
with $\eta^a$ and $\eta^b$ Dirac masses at $\{1\}$. Then, the BSDE
\[
Y_t = \xi + \int_t^T G(X_s,Z_s,H_s) \mathrm{d}s - \int_t^T Z_s \cdot \mathrm{d}X^c_s + \int_t^T \int_\R H_s^a(u) \mu_{X^a}(\textrm{d}s,\textrm{d}u) + \int_t^T \int_\R H_s^b(u) \mu_{X^b}(\textrm{d}s,\textrm{d}u) , \quad t\in[0,T],
\]
leads to the reformulation of the admissible contracts stated in \cite[Theorem 3.1(i)]{euch2021optimal}.



\section{Smoothness  of the value function of the principal} \label{sec:principal}

Given the results of the previous section, we can reformulate the problem of the principal as a standard stochastic control problem.  As showed by \citeauthor*{cvitanic2018dynamic} \cite{cvitanic2018dynamic}, the value function of the principal can thus be identified as a viscosity solution to the corresponding Hamilton-Jacobi-Bellman equation. In this section, we extend the approach to the case with jumps for the particular setting of exponential utility of the agents, and we use it to study the smoothness of the value function of the principal. We present all the proofs in Appendix \ref{app:proofs-principal}.

\medskip
Throughout all this section we will assume thus the following conditions, that allow to obtain a well defined HJB equation.

\begin{assumption}\label{ass:deterministic}
$(i)$ All the functions considered in the model are deterministic, that is, they do not depend on $\omega$. \\
$(ii)$ For every $(t,x)\in[0,T]\times\R^{dN}$ and $i\in I$ we have $\int_{E\setminus\{0\}} |\beta^i_t(x,e)|F^i(\mathrm{d}e) <\infty$. 
\end{assumption}

\subsection{The case of exponential utility with risk-neutral principal}

We place ourselves in the same setting as in Section \ref{sec:exp-utility}, that is, we assume that each function $u_A^i, c^i \equiv 0$ and $\Uc_A^i(x)=-\exp(-R_A^i x)$, with $R_A^i > 0$. We also consider a risk-neutral principal, that is we take $r\equiv 0$ and $\Uc_p(x)=x$.

\medskip
In the setting just described, the reformulated problem of the principal \eqref{eq:reformulated-principal-cert-equiv} becomes
\begin{equation*} 
V_P = \sup_{ y \geq \Uc_A^{-1}(R_0) } \sup_{\chi\in\hat\Xi_2} \sup_{(Z,H)\in\hat\Vc^{\chi}} ~  \E^{\P^{a^{\star,Z,\chi}}} \left[  L(X_T) - \sum_{i=1}^N Y_T^{y,Z,H,\chi,i}  - \int_0^T  u_P(\chi_s) \mathrm{d}s \right],
\end{equation*}
with the dynamics of the controlled processes 
\begin{align*}
 X_t &= X_0 +  \int_0^t \Sigma_s(X_s) b_s(X_s,a^{\star,Z,\chi}_s) \mathrm{d}s + \int_0^t \Sigma_s(X_s) \mathrm{d}W_s^{a^{\star,Z,\chi}} +  \int_0^t\int_{E\setminus\{0\}} \beta_s(X_s,e) \vec{\mu}_{J}(\textrm{d}s,\textrm{d}e) , \quad t\in[0,T], \\
 Y^{y,Z,H,\chi,i}_t & = \int_0^t \frac{1}{R_A^i} \bigg( \frac{(R_A^i)^2}{2} ||Z_s^{i,:}\Sigma_s(X_s)||^2 - \rho^i_s(X_s,\chi_s,a^{\star,Z,H,\chi}_s) -  \sum_{\ell=1}^N  \int_{\R^d\setminus\{0\}}\big(1-e^{R_A^i H_s^{i,\ell}(u)})\eta_s^{\ell,a^{\star,Z,H,\chi}}(X_s,\textrm{d}u)  \bigg) \textrm{d}s \\
& ~~ + y^i +  \int_0^t Z_s^{i,:} \Sigma_s(X_s) \mathrm{d}W_s^{a^{\star,Z,H,\chi}}  - \int_0^t \int_{\R^d\setminus\{0\}} H^i_s(u)\vec{\mu}_X(\textrm{d}s,\textrm{d}u), ~ t\in[0,T],~ i\in I. 
\end{align*}

Since the dependence on the $y$-variable is linear, the previous problem is equivalent to 
\begin{align} \label{eq:reformulated-principal-cert-equiv-2}
V_P  = \sup_{\chi\in\hat\Xi_2} \sup_{(Z,H)\in\hat\Vc^{\chi}} ~  \E^{\P^{a^{\star,Z,H,\chi}}}  \bigg[  L(X_T) & + \sum_{i=1}^N \int_0^T  \bigg( \frac{1}{R_A^i}\rho^i_s(X_s,\chi_s,a^{\star,Z,H,\chi}_s)  - \frac{1}{2}R_A^i ||Z_s^{i,:}\Sigma_s(X_s)||^2- \frac{1}{N} u_P(\chi_s)  \bigg)\mathrm{d}s \\
&  \nonumber + \sum_{i=1}^N \sum_{\ell=1}^N  \int_0^T\int_{\R^d\setminus\{0\}}\bigg( \frac{1-e^{R_A^i H_s^{i,\ell}(u)}}{R_A^i}  + H_s^{i,\ell}(u)\bigg)\eta_s^{\ell,a^{\star,Z,H,\chi}}(X_s,\textrm{d}u)\mathrm{d}s  \bigg] - \tilde R_0 , 
\end{align}
where $\tilde R_0:= \sum_{i=1}^N (\Uc_A^i)^{-1}(R_0^i)$. Moreover, the dynamics of $X$ can be written equivalently as
\begin{equation*}
X_t = X_0 + \int_0^t \Sigma_s(X_s)b_s(X_s,\alpha_s) \mathrm{d}s + \int_0^t \Sigma_s(X_s) \mathrm{d}W_s^\alpha +  \sum_{i=1}^N \int_0^t\int_{E\setminus\{0\}} I_{i}[ \beta^i_t(X_s,e)] \mu_{J_i}(\textrm{d}s,\textrm{d}e) , \quad t\in[0,T],
\end{equation*}
where $I_i[x]$ maps $x\in\R^d$ into the $i-$th block of $\R^{dN}$.

\medskip
The Hamilton-Jacobi-Bellman equation associated to Problem \eqref{eq:reformulated-principal-cert-equiv-2} is given by
\begin{align}\label{eq:HJB-equation}
-v_t(t,x)  - H_t(x,Dv(t,x)) -  \frac{1}{2}\text{Tr}\big(\Sigma_t(x)\Sigma_t^\top(x) D^2v(t,x)\big) -  \Hc v(t,x)  = 0, (t,x)\in[0,t)\times\R^{dN}, \\
\nonumber v(T,x)= L(x), ~ x\in\R^{dN},
\end{align}
with the Hamiltonian functions $h:[0,T]\times\R^{dN}\times\R^{dN}\times\Mc^{N,dN}\times\Bc(\R^d;\Mc^{N,N})\times\R^N\times A \rightarrow\R$ and $H:[0,T]\times\R^{dN}\times\R^{dN} \rightarrow\R$, and the integro-differential operator $\Hc$ defined by
\begin{align*}
H_t(x,p) & := \sup_{(z,h,k)\in\Mc^{N,dN}\times\Bc(\R^d;\Mc^{N,N})\times\R^N_+ }   ~ h_t(x,p,z,h,k,\hat a^\star(t,x,z,h,k)), \\
h_t(x,p,z,h,k,a) & : =  \sum_{i=1}^N \bigg( \frac{1}{R_A^i}\rho^i_t(x,k,a)  - \frac{1}{2}R_A^i ||z^{i,:}\Sigma_t(x)||^2 + \sum_{\ell=1}^N \int_{\R^d\setminus\{0\}}\bigg( \frac{1-e^{R_A^i h^{i,\ell}(u)}}{R_A^i}  + h^{i,\ell}(u)\bigg)\eta_t^{\ell,a}(x,\textrm{d}u)\bigg) \\
& \hspace{.5cm} -u_P(k) + p \cdot \Sigma_t(x)b_t(x,a), \\
\Hc v(t,x) &: = \sum_{i=1}^N \int_{E\setminus\{0\}} \bigg( v\big(t,x+I_{i}[\beta_t^i(x,e)] \big) -  v(t,x) \bigg) F^i(\textrm{d}e).
\end{align*}

\begin{remark}
Note that, by the change of variable formula, since $\eta_s^{i}$ is the push-forward measure of $F^i$, we have the equivalence for every $(t,x)\in [0,T]\times\R^{dN}$
\[
\int_{\R^d\setminus\{0\}} \big(  v\big(t,x+I_{i}[u] \big) - v(t,x) \big) \eta_t^i(x,\emph{d}u) \emph{d}t = \int_{E\setminus\{0\}} \bigg(  v\big(t,x+I_{i}[ \beta^i_t(x,e)] \big) - v(t,x) \bigg)  F^i(\emph{d}e)\emph{d}t.
\]
\end{remark}

We start by proving that the value function of the problem of the principal is the unique (in an appropriate class of functions) viscosity solution to the I-PDE \eqref{eq:HJB-equation}. To this end, we assume the following conditions that are not needed in the previous sections.

\begin{assumption}\label{ass:pham-jumps}
$(i)$ The map $\Sigma$ is continuous in $(t,x)$ and Lipschitz in $x$. \\
$(ii)$ The map $b$ is continuous in $(t,x,a)$ and the map $\Sigma b$ is Lipschitz in $x$. \\
$(iii)$ The map $\hat a^\star$ is continuous in $(t,x,z,h,k)$. \\
$(iv)$  $E$ is finite dimensional. \\
$(v)$ The map $\beta$ is continuous in $(t,x)$. For each $i\in I$ there exists $\rho_i:E\rightarrow\R_+$, with $\sum_{i=1}^N\int \rho_i^2(e)F^i(\emph{d}e)<\infty$, such that
\[
|\beta^i(t,x,e)-\beta^i(t,y,e)| \leq \rho_i(e)|x-y|, \quad |\beta^i(t,x,e)|\leq \rho_i(e)(1+|x|), \quad \forall t\in[0,T], ~\forall x,y\in\R^{dN}, ~\forall e\in E.
\]
$(vi)$ The map $L$ is Lipschitz. \\
$(vii)$ The map $\phi:[0,T]\times\R^{dN}\times\Mc^{N,dN}\times  \Bc(\R^d;\Mc^{N,N})\times\R^N  \longrightarrow\R$ defined below is Lipschitz in $(t,x)$ 
\begin{align*}
\phi(t,x,z,h,k) :=  &\sum_{i=1}^N   \bigg( \frac{1}{R_A^i}\rho^i_t(x,k,a^\star(t,x,z,h,k)))  - \frac{1}{2}R_A^i ||z^{i,:}\Sigma_t(x)||^2 \bigg)-  u_P(k)  \\
&   + \sum_{i=1}^N \sum_{\ell=1}^N \int_{\R^d\setminus\{0\}}\bigg( \frac{1-e^{R_A^i h^{i,\ell}(u)}}{R_A^i}  + h^{i,\ell}(u)\bigg)\eta_t^{\ell,a^\star(t,x,z,h,k)}(x,\mathrm{d}u).
\end{align*}
\end{assumption}

\begin{remark}
A sufficient condition for $\Sigma b$ to be Lipschitz in $x$ is that both $\Sigma$ and $b$ are Lipschitz in $x$ and one of the functions is bounded. A sufficient condition for the map $\hat a^\star$ to be continuous is that the sets $A_i$ are compact, the maps $g^i$ are continuous and depend on the $a$-variable only through $a^i$ (see for instance \cite[Theorem 17.31]{aliprantis2006infinite}).
\end{remark}

Let us define the value function of the principal starting from any initial condition $(t,x)\in[0,T]\times\R^{dN}$
\begin{equation*}
V(t,x) : = \sup_{\chi\in\hat\Xi_2} \sup_{(Z,H)\in\hat\Vc^{\chi}} ~  \E^{\P^{a^{\star,Z,H,\chi}}} \bigg[  L(X^{t,x}_T) + \int_t^T  \phi(s,X_s^{t,x},Z_s,H_s,\chi_s )\mathrm{d}s \bigg],
\end{equation*}
where $X^{t,x}$ is the solution to 
\[
 X_s^{t,x} = x  +  \int_t^s \Sigma_r(X_r) b_r(X_r^{t,x},a^{\star,Z,H,\chi}_r) \mathrm{d}r + \int_t^s \Sigma_r(X^{t,x}_r) \mathrm{d}W_r^{a^{\star,Z,H,\chi}}+  \int_t^s\int_{E\setminus\{0\}} \beta_r(X_r^{t,x},e) \vec{\mu}_{J}(\textrm{d}r,\textrm{d}e), \; r\in[ t,T].
\]
Notice, by definition, the equality $V_P = V(0,X_0)$. Then we have, by using the results in \citeauthor*{pham1998optimal} \cite{pham1998optimal}, the standard result of the value function being a viscosity solution to the HJB equation associated to the stochastic control problem.

\begin{proposition} \label{prop:unique-viscosity}
Under Assumption \ref{ass:pham-jumps}, the function $V$ is the unique viscosity solution of I-PDE \eqref{eq:HJB-equation} which is continuous in $[0,T]\times\R^{dN}$ and uniformly continuous in $x$, uniformly in $t$.
\end{proposition}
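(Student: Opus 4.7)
The plan is to apply the general viscosity-solution framework for stochastic control problems with jumps developed in \citeauthor*{pham1998optimal} \cite{pham1998optimal}, verifying that its hypotheses are satisfied under Assumption \ref{ass:pham-jumps}. I would proceed in three steps: uniform continuity of $V$, the viscosity solution property, and uniqueness via a comparison principle.

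For the regularity of $V$, I would rely on standard SDE estimates. Under the Lipschitz properties of $\Sigma$ and $\Sigma b$ together with the square-integrable Lipschitz bound on $\beta$ (Assumption \ref{ass:pham-jumps}(i), (ii), (v)), Burkholder--Davis--Gundy and Gronwall yield, uniformly over admissible controls,
$$\E^{\P^{a^{\star,Z,H,\chi}}}\big[\textstyle\sup_{s\in[t,T]}|X_s^{t,x}-X_s^{t,x'}|^2\big] \leq C|x-x'|^2,\qquad \E^{\P^{a^{\star,Z,H,\chi}}}\big[\textstyle\sup_{s\in[t,T]}|X_s^{t,x}|^2\big] \leq C(1+|x|^2).$$
Combined with the Lipschitz continuity of $L$ and $\phi$ in $(t,x)$ (Assumption \ref{ass:pham-jumps}(vi)--(vii)), these estimates immediately give uniform continuity of $V$ in $x$ uniformly in $t$, together with joint continuity after a routine time-regularity argument on the running cost.

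For the viscosity property, I would first establish the dynamic programming principle for $V$ via a measurable selection argument on the admissible class $\hat\Vc^{\chi}$, then pass it through a smooth test function $\varphi$ by Itô's formula applied over a small interval with constant controls $(z,h,k)\in\Mc^{N,dN}\times\Bc(\R^d;\Mc^{N,N})\times\R^N_+$. The continuity of $\hat a^\star$ (Assumption \ref{ass:pham-jumps}(iii)) guarantees that the pointwise optimizer inside the Hamiltonian depends measurably on its arguments, so that the standard subsolution/supersolution argument yields
$$-\varphi_t(t_0,x_0) - H_{t_0}(x_0,D\varphi(t_0,x_0)) - \tfrac{1}{2}\mathrm{Tr}\bigl(\Sigma_{t_0}\Sigma_{t_0}^\top D^2\varphi(t_0,x_0)\bigr) - \Hc\varphi(t_0,x_0) \leq 0\qquad (\text{resp. } \geq 0),$$
at any contact point, which is precisely the viscosity solution property for \eqref{eq:HJB-equation}.

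The main obstacle is uniqueness, for which one needs a comparison principle for the non-local equation \eqref{eq:HJB-equation}. I would invoke the machinery of \cite{pham1998optimal}: a doubling-of-variables argument, enhanced by the Jensen--Ishii lemma adapted to second-order integro-differential operators, reduces the uniqueness question to the Lipschitz regularity of the coefficients. Lipschitz continuity of $\phi$, $\Sigma$ and $\Sigma b$ in $x$, together with the $L^2(F^i)$ control on the jump sizes through $\rho_i$, supplies exactly the ingredients required in \cite{pham1998optimal} to absorb the error terms coming from $\Hc$. The delicate point is that the control set is infinite-dimensional: the jump-compensation component $h$ ranges over $\Bc(\R^d;\Mc^{N,N})$ and is coupled with the intensity through $\hat a^\star$, so one must verify that $H_t(x,p)$ inherits continuity in $(t,x,p)$ from $\phi$ and $b$; this follows from the pointwise structure of the integrand and dominated convergence under the assumed integrability bounds. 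Comparison then holds in the class of continuous functions uniformly continuous in $x$ uniformly in $t$, yielding uniqueness of $V$ in that class.
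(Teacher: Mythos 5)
Your proposal is correct and follows essentially the same route as the paper: the paper simply verifies that assumptions (2.1)--(2.5) of \citeauthor*{pham1998optimal} \cite{pham1998optimal} hold under Assumption \ref{ass:pham-jumps} and then cites \cite[Theorem 3.1]{pham1998optimal} for the viscosity property, \cite[Proposition 3.3]{pham1998optimal} for the continuity, and the comparison result \cite[Theorem 4.1]{pham1998optimal} for uniqueness. Your sketch merely unpacks the internal arguments (SDE estimates, dynamic programming, doubling of variables) behind those cited results rather than invoking them directly, which is the same approach in more detail.
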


We will present next an alternative characterization of the viscosity solution to the HJB equation \eqref{eq:HJB-equation}, as the solution to an associated FBSDE system. To achieve this, we follow the results in \citeauthor*{barles1997backward} \cite{barles1997backward}. As shown below, this characterization will allow to study the smoothness of the value function through the smoothness of the aforementioned system.

\medskip
Define the map $\psi:[0,T]\times\R^{dN}\times\R^{nM}\rightarrow\R$  as follows
\begin{align*}
\psi(t,x,\zeta) := \sup_{(z,h,k)\in\Mc^{N,dN}\times\Bc(\R^d;\Mc^{N,N})\times\R^N_+ } ~ \phi(t,x,z,h,k) + \zeta \cdot b_t(x,\hat a^\star(t,x,z,h,k)).
\end{align*}

We assume again some conditions that are not needed for the previous results in the paper.

\begin{assumption} \label{ass:bbp-jumps} 
$(i)$ The maps $\Sigma$ and $(\beta^i)_{k=1}^L$ do not depend on $t$. \\
$(ii)$ There exists $C>0$ such that each $\beta^i$ satisfies
\[
|\beta^i(x,e)-\beta^i(y,e)| \leq C|x-y|(1 \wedge |e|), \quad |\beta^i(x,e)|\leq C(1+|x|)(1\wedge |e|), \quad \forall  x,y\in\R^{dN}, ~\forall e\in E.
\] 
$(iii)$ The map $\psi$ is Lipschitz in $\zeta$, uniformly in $(t,x)$. 
\end{assumption}

\begin{remark}
A sufficient condition for Assumption \ref{ass:bbp-jumps} $(iii)$ is that the map $b$ is bounded.
\end{remark}

\begin{proposition}\label{prop:fsbde-viscosity}
Under Assumptions \ref{ass:pham-jumps} and \ref{ass:bbp-jumps}, the map $u:[0,T]\times\R^{dN}\rightarrow\R$ defined by $u(t,x)=\tilde Y_t^{t,x}$ is  a viscosity solution to the I-PDE \eqref{eq:HJB-equation}, where $(\tilde X^{t,x},\tilde Y^{t,x}, \tilde Z^{t,x}, \tilde H^{t,x})$ is the adapted solution to the FBSDE system
\[
\tilde X_s^{t,x} = x +\int_t^s \Sigma(\tilde X_r^{t,x}) \mathrm{d}W_r + \sum_{i=1}^N \int_t^s \int_{E\setminus\{0\}} I_{i}[ \beta^i(\tilde X_s^{t,x},e)] \mu_{J_i}(\mathrm{d}r,\mathrm{d}e),
\]
\[
\tilde Y_s^{t,x} = L(\tilde X_T^{t,x}) + \int_s^T \psi(r,\tilde X_r^{t,x},\tilde Z_r^{t,x}) \mathrm{d}r - \int_s^T \tilde Z_r^{t,x} \mathrm{d}W_r - \sum_{i=1}^N \int_s^T \int_{E\setminus\{0\}} \tilde H_r^{t,x}(e) \big(\mu_{J_i}(\mathrm{d}r,\mathrm{d}e) -F^i(\mathrm{d}e)\mathrm{d}r\big).
\]
Moreover, if the maps $L$ and $\psi(t,\cdot,\zeta)$ are uniformly continuous, uniformly on $\zeta$, and bounded, then $u$ is uniformly continuous and bounded.
\end{proposition}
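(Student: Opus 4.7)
The plan is to deduce the statement from the nonlinear Feynman-Kac representation for decoupled FBSDEs with jumps, as developed in \cite{barles1997backward}, together with a direct identification of the resulting integro-PDE with \eqref{eq:HJB-equation}. First I would check well-posedness of the forward SDE for $\tilde X^{t,x}$, together with the flow estimate $\E[\sup_{s\in[t,T]}|\tilde X_s^{t,x}-\tilde X_s^{t',x'}|^2]\leq C(|x-x'|^2+|t-t'|)$. This is routine given that $\Sigma$ is Lipschitz (Assumption \ref{ass:pham-jumps}(i)) and the jump coefficients $\beta^i$ satisfy the square-integrable Lipschitz bound in $e$ of Assumption \ref{ass:bbp-jumps}(ii). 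Since $L$ is Lipschitz (Assumption \ref{ass:pham-jumps}(vi)) and $\psi$ is Lipschitz in $\zeta$ uniformly in $(t,x)$ (Assumption \ref{ass:bbp-jumps}(iii)), classical BSDE theory with driving noise consisting of a Brownian motion and independent Poisson random measures then delivers a unique square-integrable adapted triple $(\tilde Y^{t,x},\tilde Z^{t,x},\tilde H^{t,x})$.

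Next, I would invoke the jump version of the Feynman-Kac formula in the spirit of \cite[Theorem 3.4]{barles1997backward} to conclude that $u(t,x):=\tilde Y_t^{t,x}$ is a viscosity solution of
\[
-u_t - \tfrac{1}{2}\mathrm{Tr}\bigl(\Sigma(x)\Sigma^\top(x) D^2u\bigr) - \Hc u - \psi\bigl(t,x,\Sigma^\top(x) Du\bigr) = 0, \quad u(T,\cdot)=L.
\]
To match this with \eqref{eq:HJB-equation} I would perform the substitution $\zeta = \Sigma_t^\top(x) p$ inside the definition of $\psi$: this turns $\zeta\cdot b_t(x,a)$ into $p\cdot\Sigma_t(x) b_t(x,a)$, the remaining arguments of the supremum are unchanged, and by definition of $\hat a^\star$ one recovers $\psi(t,x,\Sigma_t^\top(x) p)=H_t(x,p)$. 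Combined with the fact that the infinitesimal generator of $\tilde X$ is precisely $\tfrac{1}{2}\mathrm{Tr}(\Sigma\Sigma^\top D^2\cdot)+\Hc$, this identifies $u$ as a viscosity solution of \eqref{eq:HJB-equation}.

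For the last assertion, boundedness would follow from a comparison of the BSDE against the two constant-generator BSDEs with drivers $\pm\|\psi\|_\infty$, yielding $\|u\|_\infty\leq\|L\|_\infty+T\|\psi\|_\infty$. Uniform continuity would be obtained by combining the $L^2$-flow estimate on $\tilde X^{t,x}$ with the standard BSDE stability estimate under perturbations of the terminal value and driver, noting that uniform continuity of $L$ and of $\psi(t,\cdot,\zeta)$ (uniform in $\zeta$) propagates to uniform continuity of $u$ through these estimates. The main technical obstacle is the bookkeeping required to extend \cite{barles1997backward} from a single Poisson random measure to the $N$ independent measures $(\mu_{J^i})_{i=1}^N$ with the structured coefficients $I_i[\beta^i(x,e)]$; this can be handled either by aggregating the $\mu_{J^i}$ into a single integer-valued random measure on $[0,T]\times\bigsqcup_i E$, after checking that the regularity assumptions transfer to the aggregated kernel, or by running the viscosity test-function argument block by block. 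Because the $J^i$ are independent and each $F^i$ satisfies $\int(1\wedge|e|^2)F^i(\mathrm{d}e)<\infty$, neither route introduces substantive new difficulties.
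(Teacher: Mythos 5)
Your proposal is correct and follows essentially the same route as the paper: check the hypotheses of Barles--Buckdahn--Pardoux and invoke their Theorem 3.4 for the viscosity property, with the paper obtaining the final uniform continuity and boundedness from their Proposition 2.5 where you rederive them via comparison with constant-driver BSDEs and stability estimates. Your additional steps --- the identification $\psi(t,x,\Sigma^\top(x)p)=H_t(x,p)$ linking the Feynman--Kac PIDE to \eqref{eq:HJB-equation} and the aggregation of the $N$ independent jump measures --- are precisely the bookkeeping the paper leaves implicit.
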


To conclude this section, we use the FBSDE representation of the value function to prove its smoothness under some additional assumptions. To do so, we follow the results by \citeauthor*{fujii2018quadratic} \cite{fujii2018quadratic}.

\begin{assumption}\label{ass:fujii-jumps} ~\\
$(i)$ The maps $\Sigma$ and $(\beta^i)_{k=1}^L$ are continuously differentiable with respect to $x$, with bounded derivative. \\
$(ii)$ For every $(x,\zeta)$ there exist a constant $\hat C_2>0$ and a non-negative measurable function $l:[0,T]\rightarrow\R_+$ such that 
\[
|\psi(t,x,\zeta) | \leq l_t + \frac{\hat C_2}{2} |\zeta|^2, \quad \emph{d}t-a.e., ~ t\in[0,T].
\]
$(iii)$ The map $|L(x)| + l_t$ is bounded uniformly in $(t,x)$. \\
$(iv)$ The maps $L$ and $\psi$ are continuously differentiable with respect to $x$. \\
$(v)$ There exists a positive constant $\hat C_3$ such that for every $(t,x)\in[0,T]\times\R^{dN}$
\[
|\delta_x L(x)|\leq \hat C_3, \quad |\delta_x \psi(t,x,0)| \leq \hat C_3.
\]
$(vi)$ There exists a positive constant $\hat C_4$ such that for every $(t,x)\in[0,T]\times\R^{dN}$ and $\zeta_1,\zeta_2\in\Mc^{N,dN}$
\[
|\delta_x \psi(t,x,\zeta_1) - \delta_x \psi(t,x,\zeta_2)| \leq  \hat C_4 (1+|\zeta_1|+|\zeta_2|)|\zeta_1 - \zeta_2|.
\] 
\end{assumption}

\begin{proposition} \label{prop-fujii-jumps}
Under Assumptions \ref{ass:pham-jumps}, \ref{ass:bbp-jumps} and \ref{ass:fujii-jumps}, the map $u$ defined in Proposition \ref{prop:fsbde-viscosity} is continuous and continuously differentiable with respect to $x$.
\end{proposition}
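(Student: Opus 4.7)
The plan is to invoke directly the regularity results for quadratic growth FBSDEs with jumps developed by \citeauthor*{fujii2018quadratic} \cite{fujii2018quadratic}, applied to the decoupled system appearing in Proposition \ref{prop:fsbde-viscosity}. Since $u(t,x)=\tilde Y_t^{t,x}$, the regularity of $u$ in $x$ will be transferred from the regularity of the flow $x\mapsto (\tilde X^{t,x},\tilde Y^{t,x},\tilde Z^{t,x},\tilde H^{t,x})$ of the FBSDE system. Specifically, once the first variation process of the system is shown to exist and to be continuous in $x$, the representation $\partial_x u(t,x) = \partial_x \tilde Y_t^{t,x}$ will give the claim, and the joint continuity in $(t,x)$ follows from standard stability of FBSDEs with Lipschitz forward coefficients.

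The first step is a careful verification that our FBSDE matches the hypotheses of the Fujii--Takahashi framework. Assumption \ref{ass:fujii-jumps}(i), together with Assumption \ref{ass:bbp-jumps}(ii) and the square-integrability condition of Assumption \ref{ass:pham-jumps}(v), provides the standing Lipschitz and differentiability requirements on the forward coefficients $\Sigma$ and $\beta^i$, and yields a unique strong solution $\tilde X^{t,x}$ with a classical Malliavin-type flow, differentiable in $x$ with bounded derivative. Assumption \ref{ass:fujii-jumps}(ii)-(iii) matches the quadratic growth of the driver $\psi$ in $\zeta$ together with the boundedness of the terminal condition $L$ and of the deterministic part of the generator, which is what allows the BMO a priori estimates for $\tilde Z^{t,x}$. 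Assumption \ref{ass:fujii-jumps}(iv)-(vi) provides exactly the differentiability and local-Lipschitz conditions on $\partial_x\psi$ used in \cite{fujii2018quadratic} to differentiate the BSDE in the forward initial condition.

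The second step is to apply the main regularity theorem of \cite{fujii2018quadratic} to our setting, which yields the existence of a first variation process $(\nabla \tilde X^{t,x}, \nabla \tilde Y^{t,x}, \nabla \tilde Z^{t,x}, \nabla \tilde H^{t,x})$ solving the formally differentiated FBSDE. The forward variation process is the classical derivative flow of $\tilde X^{t,x}$, governed by the linearisations of $\Sigma$ and $\beta^i$; the backward variation process then solves a linear BSDE with quadratic-type coefficients, whose wellposedness is ensured precisely by the BMO estimates on $\tilde Z^{t,x}$ and by Assumption \ref{ass:fujii-jumps}(v)-(vi). From this one reads off the identity $\partial_x u(t,x) = \nabla \tilde Y_t^{t,x}$, and continuity of $(t,x)\mapsto \nabla \tilde Y_t^{t,x}$ follows from the stability of the variation FBSDE with respect to its inputs, combined with the joint continuity of the flow of $\tilde X^{t,x}$. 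Joint continuity of $u$ itself is a standard consequence of stability of the original FBSDE (Proposition \ref{prop:fsbde-viscosity} already yields continuity when $L$ and $\psi(t,\cdot,\zeta)$ are bounded and uniformly continuous, which is granted here by Assumption \ref{ass:fujii-jumps}(iii)-(v)).

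The main obstacle is the quadratic growth of $\psi$ in $\zeta$: the linearised BSDE governing $\nabla \tilde Y^{t,x}$ has a driver whose $z$-component has a linear term $\partial_\zeta \psi(t,\tilde X,\tilde Z)\cdot \nabla z$ with $|\partial_\zeta\psi|$ of linear growth in $|\tilde Z|$, so one must rely on the BMO property of $\int_0^\cdot \tilde Z_s^{t,x}\mathrm{d}W_s$ in order to apply Kazamaki's theorem and reduce the linear BSDE to a standard Lipschitz BSDE under an equivalent measure. Once this is done, the jump part of the variation BSDE causes no additional difficulty: the jump measures $\mu_{J_i}-F^i\mathrm{d}s$ remain martingale measures under the change of measure (their compensators are deterministic and unaffected), and the coefficients $I_i[\partial_x\beta^i]$ inherit boundedness from Assumption \ref{ass:fujii-jumps}(i). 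The remaining verifications are routine.
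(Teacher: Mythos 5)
Your proposal follows essentially the same route as the paper: the paper's proof simply verifies that Assumptions 6.1--6.4 of \cite{fujii2018quadratic} hold (noting their Assumption 5.1 is trivial here since $\psi$ depends only on $(t,x,\zeta)$) and then invokes their Corollary 6.1 to obtain continuity of $u$ and continuous differentiability in $x$. Your additional discussion of the first-variation FBSDE, BMO estimates and the measure change is a correct account of the machinery behind that corollary, but it is not a different argument.
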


We have finally, the last assumption which allows us to present the main result of the section.

\begin{assumption} \label{ass:last-one}
$(i)$ The map $\psi(t,\cdot,\zeta)$ is uniformly continuous, uniformly in $\zeta$. \\
$(ii)$ There exists a correspondence $\Vc^\star:[0,T]\times\R^{dN}\times\R^{dN}\rightrightarrows \Mc^{N,dN}\times  \Bc(\R^d;\Mc^{N,N})\times\R^N$ of maximizers of $h$, that is
\[
\Vc^\star(t,x,p) = \bigg\{ (z,h,k)\in\Mc^{N,dN}\times\Bc(\R^d;\Mc^{N,N})\times\R^N : H_t(x,p,M) = h_t(x,p,z,h,k,\hat a^\star(t,x,z,h,k))   \bigg\},
\] 
which has nonempty values.
\end{assumption}

\begin{remark}\label{rem:maximizers}
If for each $(t,x,p)$ the optimization in the Hamiltonian $H$ can be reduced to a compact set $C(t,x,p)$ (for instance if $h_t$ is coercitive) and the correspondence $C$ is continuous, then by the Maximum Theorem (see for instance \cite[Theorem 17.31]{aliprantis2006infinite}) we have that $\Vc^\star$ has nonempty compact values and it is upper-hemicontinuous.
\end{remark}

\begin{theorem} \label{thr-smoothness-and-existence}
 Under Assumptions \ref{ass:pham-jumps}, \ref{ass:bbp-jumps}, \ref{ass:fujii-jumps} and \ref{ass:last-one} we have: \\
$(i)$ The value function of the principal $V$ is continuously differentiable with respect to $x$. \\
$(ii)$ Define the processes 
\[
Z_t^\star := z^\star(t,X_t,\partial_x V(t,X_t)), ~ H_t^\star := h^\star(t,X_t,\partial_x V(t,X_t)), ~ \chi_t^\star := \chi^\star(t,X_t,\partial_x V(t,X_t)),
\] 
where $(z^\star,h^\star,\chi^\star)(t,x,p)$ is any measurable selection of maximizers of $h$ as in Assumption \ref{ass:last-one}. If $\chi^\star\in\hat\Xi_2$ and $(Z^\star,H^\star)\in\hat\Vc^{\chi^\star}$, then the optimal control for problem \eqref{eq:reformulated-principal-cert-equiv-2} is given by the triplet $(\chi^\star,Z^\star,H^\star)$.  In such case, the optimal contract for each agent $i\in I$ is given by 
\begin{align*}
\xi^{\star,i} & =  \int_0^t  \frac{1}{R_A^i} \bigg( \frac{1}{2}(R_A^i)^2 ||Z_s^{^\star,i,:}\Sigma_s(X_s)||^2 - \rho^i_s(X_s,\chi_s^\star,a^{\star,Z^\star,H^\star,\chi^\star}_s) - \sum_{\ell=1}^N  \int_{\R^d\setminus\{0\}}\big(1-e^{R_A^i H_s^{\star,i,\ell}(x)} \big)  \nu_s^{\ell,a^{\star,Z^\star,H^\star,\chi^\star}}(\mathrm{d}x)  \bigg) \mathrm{d}s \\
& ~~ + (\Uc_A^i)^{-1}(R_0^i) + \int_0^t Z_s^{\star,i,:} \Sigma_s(X_s) \mathrm{d}W_s^{a^{\star,Z^\star,H^\star,\chi^\star}}  - \int_0^t \int_{\R^d\setminus\{0\}} H^{\star,i}_s(x)\vec{\mu}_X(\mathrm{d}s,\mathrm{d}x), ~ t\in[0,T]. 
\end{align*}
\end{theorem}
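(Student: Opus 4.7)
The plan is to combine Propositions \ref{prop:unique-viscosity}, \ref{prop:fsbde-viscosity} and \ref{prop-fujii-jumps} to obtain (i), and then to use the FBSDE representation together with Corollary \ref{cor-optimal-response-cert-equiv} to carry out the verification needed for (ii).

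For (i), I would first check that Assumption \ref{ass:last-one}(i) combined with Assumption \ref{ass:fujii-jumps}(iii) yields that the generator $\psi$ is uniformly continuous in $x$, uniformly in $\zeta$, and that both $L$ and the majorant $l_t$ are bounded. The second part of Proposition \ref{prop:fsbde-viscosity} then provides a bounded and uniformly continuous (in $x$, uniformly in $t$) viscosity solution $u$ of the I-PDE \eqref{eq:HJB-equation}. By Proposition \ref{prop:unique-viscosity}, this uniqueness class contains $V$, so $V \equiv u$. Finally, Proposition \ref{prop-fujii-jumps} states that $u$ is continuously differentiable in $x$, and this regularity transfers to $V$.

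For (ii), the argument is a verification carried out through the FBSDE of Proposition \ref{prop:fsbde-viscosity}. The existence of a measurable selection $(z^\star,h^\star,\chi^\star)$ of the correspondence $\Vc^\star$ of Assumption \ref{ass:last-one}(ii) follows by a standard Kuratowski--Ryll-Nardzewski selection argument (using Remark \ref{rem:maximizers} when applicable). Defining the feedback controls $(Z^\star,H^\star,\chi^\star)$ as in the statement and assuming their admissibility $\chi^\star \in \hat\Xi_2$, $(Z^\star,H^\star)\in\hat\Vc^{\chi^\star}$, one substitutes into the objective \eqref{eq:reformulated-principal-cert-equiv-2} and shows equality with $V(0,X_0)=V_P$. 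Because the candidate attains the Hamiltonian maximizer at every $(t,X_t)$, the $Y$-component of the FBSDE representing $V=u$ has a drift that matches exactly the integrand of the running reward plus the jump compensation terms in \eqref{eq:reformulated-principal-cert-equiv-2}, yielding the attainability of the supremum. The explicit form of the optimal contract $\xi^{\star,i}$ then follows from Corollary \ref{cor-optimal-response-cert-equiv} with $y^i=(\Uc_A^i)^{-1}(R_0^i)$, which saturates the participation constraint, by unrolling the defining SDE of $Y^{y,Z^\star,H^\star,\chi^\star,i}$ at $t=T$.

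The main obstacle is performing the verification with only $C^{0,1}$ regularity in $x$. A classical verification would apply It\^o's formula to $V(t,X_t)$, which requires $C^{1,2}$. The FBSDE route circumvents this: the backward equation already provides an exact decomposition of $u(t,X_t)$ as a stochastic integral plus an absolutely continuous drift, so the identification of $V(0,X_0)$ with the candidate's objective does not rely on an It\^o expansion of $u$. The delicate step is to relate the $\tilde Z$-component of the FBSDE to $\partial_x u \cdot \Sigma$ on the closed-loop trajectory; this can be handled either by a mollification argument (approximating $L$ and $\psi$ by smooth data, invoking the smoothness of solutions in the approximating problems, and passing to the limit using stability of viscosity solutions and of FBSDEs) or through a nonlinear Feynman--Kac representation in the form given by \cite{fujii2018quadratic}. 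The admissibility of $(\chi^\star, Z^\star, H^\star)$ is an essential input that cannot in general be removed without extra structure on the primitives.
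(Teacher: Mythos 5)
Your proposal is correct and takes essentially the same route as the paper: part $(i)$ is the identical chain — Proposition \ref{prop:fsbde-viscosity} to get a bounded, uniformly continuous viscosity solution $u$, uniqueness from Proposition \ref{prop:unique-viscosity} to conclude $u=V$, and Proposition \ref{prop-fujii-jumps} for the $C^{0,1}$ regularity — and part $(ii)$ is the same FBSDE-based verification of attainment of the Hamiltonian supremum along the candidate control, with the contract read off from Corollary \ref{cor-optimal-response-cert-equiv}. The only (minor) divergences are that the paper obtains the suboptimality of arbitrary admissible controls by representing their payoffs as controlled BSDEs and invoking the comparison theorem of Royer, whereas you appeal to the definition of $V_P$ as a supremum, and that you explicitly flag the identification of the $\tilde Z$-component with $\partial_x V\,\Sigma$ along the closed-loop trajectory (resolved via the representation in \cite{fujii2018quadratic}), a point the paper leaves implicit.
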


\begin{remark}
A sufficient condition for the control $(\chi^\star,Z^\star,H^\star)$ to be admissible is that it is bounded, which happens for instance if it is independent of $X_t$, $\partial_x V(t,X_t)$ is bounded and $\Vc^\star$ is upper-hemicontinuous (see Remark \ref{rem:maximizers}).
\end{remark}

{\small
 \bibliographystyle{plain}
\bibliography{bibliographyDylan}}

\appendix

\section{Auxiliary results}

\subsection{Dynamic programming principle}

For $\tau\in\Tc_{0,T}$, an agent $i\in I$, actions of the others $\alpha^{-i}$ and $\alpha \in \Ac^i(\alpha^{-i})$, we define 
\begin{equation*} 
L^i(\tau,\alpha^{-i},\alpha^i) = \E_\tau^{\P^\alpha} \left[ e^{-\int_\tau^T \rho^i_s(X_s,\chi_s,\alpha_s) \mathrm{d}s} ~ \Uc^i_A (\xi^i) + \int_\tau^T e^{-\int_\tau^s \rho^i_u(X_u,\chi_u,\alpha_u) \mathrm{d}u} \big( u_A^i(\chi_s) - c^i_s(X_s,\alpha_s) \big) \mathrm{d}s   \right].
\end{equation*}
We therefore have, by definition
\begin{equation}\label{continuationutility1}
V^i(\tau,\alpha^{-i},\xi^i,\chi^i) = \esssup_{\alpha^i\in \Ac^i(\alpha^{-i})} L^i(\tau,\alpha^{-i},\alpha^i).
\end{equation}

We have then the following Dynamic Programming Principle, whose proof is inspired by the one in \cite{euch2021optimal}.
\begin{proposition}\label{prop-DPP}
Let $\tau,\theta \in\Tc_{0,T}$ be such that $\tau\leq \theta$ $\P-$a.s. and $\theta$ is a predictable stopping time. Then
\begin{align*}
V^i(\tau,\alpha^{-i},\xi^i,\chi^i) = \esssup_{\alpha^i\in\Ac^i(\alpha^{-i})} \E_\tau^{\P^{\alpha}} \bigg[ &  e^{-\int_\tau^\theta \rho^i_s(X_s,\chi_s, \alpha_s) \mathrm{d}s} ~V^i(\theta,\alpha^{-i},\xi^i,\chi^i) \\
& + \int_\tau^\theta e^{-\int_\tau^s \rho^i_u(X_u,\chi_u,\alpha_u) \mathrm{d}u} \big( u^i_A(\chi_s) - c^i_s(X_s,\alpha_s) \big) \mathrm{d}s  \bigg],
\end{align*}
\end{proposition}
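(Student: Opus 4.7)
The plan is to prove the two inequalities $\leq$ and $\geq$. The backbone is the \emph{flow property} for $L^i$ itself: for any $\alpha^i \in \Ac^i(\alpha^{-i})$ and stopping times $\tau \leq \theta$, one has
\begin{equation*}
L^i(\tau,\alpha^{-i},\alpha^i) = \E_\tau^{\P^\alpha}\!\left[ e^{-\int_\tau^\theta \rho_s^i(X_s,\chi_s,\alpha_s)\,\mathrm{d}s} L^i(\theta,\alpha^{-i},\alpha^i) + \int_\tau^\theta e^{-\int_\tau^s \rho_u^i\,\mathrm{d}u}\bigl(u_A^i(\chi_s)-c_s^i(X_s,\alpha_s)\bigr)\mathrm{d}s \right].
\end{equation*}
This follows by splitting the integral defining $L^i(\tau,\alpha^{-i},\alpha^i)$ at $\theta$, factoring out the $\Fc_\theta$-measurable quantity $\exp(-\int_\tau^\theta \rho_s^i \mathrm{d}s)$, and using the tower property $\E_\tau^{\P^\alpha}[\cdot] = \E_\tau^{\P^\alpha}[\E_\theta^{\P^\alpha}[\cdot]]$. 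All integrability requirements needed to justify the manipulations come from items $(i)$--$(iii)$ of Definition \ref{def-admisible-contracts}.

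For the inequality $\leq$, observe that by construction $L^i(\theta,\alpha^{-i},\alpha^i) \leq V^i(\theta,\alpha^{-i},\xi^i,\chi^i)$ $\P$-a.s. for every $\alpha^i \in \Ac^i(\alpha^{-i})$. Multiplying by the non-negative factor $\exp(-\int_\tau^\theta \rho_s^i\mathrm{d}s)$, taking $\E_\tau^{\P^\alpha}[\cdot]$, and plugging into the flow identity, yields that $L^i(\tau,\alpha^{-i},\alpha^i)$ is bounded above by the random variable in the right-hand side of the DPP. Taking the essential supremum over $\alpha^i \in \Ac^i(\alpha^{-i})$ gives $V^i(\tau,\alpha^{-i},\xi^i,\chi^i) \leq \text{RHS}$.

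For the inequality $\geq$, the key step is upward-directedness of $\{L^i(\theta,\alpha^{-i},\alpha^i): \alpha^i \in \Ac^i(\alpha^{-i})\}$. Given $\alpha^{i,1},\alpha^{i,2}$, set $B := \{L^i(\theta,\alpha^{-i},\alpha^{i,1}) \geq L^i(\theta,\alpha^{-i},\alpha^{i,2})\} \in \Fc_\theta$ and define $\alpha^{i,3} := \alpha^{i,1}{\bf 1}_{\llbracket 0,\theta\rrbracket} + (\alpha^{i,1}{\bf 1}_B + \alpha^{i,2}{\bf 1}_{B^c}){\bf 1}_{\rrbracket\theta,T\rrbracket}$. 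Predictability of $\theta$ together with $\Fc_\theta$-measurability of $B$ ensures that $\alpha^{i,3}$ is $\F$-predictable; its admissibility is checked by splitting the stochastic exponential $M^{\alpha^{i,3}\otimes_i\alpha^{-i}}$ at $\theta$ and using the martingale property of $M^{\alpha^{i,j}\otimes_i\alpha^{-i}}$, $j=1,2$, on each piece. By the classical argument of \cite[Proposition A.5]{karoui1981aspects} (or equivalently Neveu's lemma), there exists a maximising sequence $(\alpha^{i,n})_n \subset \Ac^i(\alpha^{-i})$ such that $L^i(\theta,\alpha^{-i},\alpha^{i,n}) \uparrow V^i(\theta,\alpha^{-i},\xi^i,\chi^i)$, $\P$-a.s. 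Now for any fixed $\alpha^i \in \Ac^i(\alpha^{-i})$, form the concatenation $\beta^{i,n} := \alpha^i{\bf 1}_{\llbracket 0,\theta\rrbracket} + \alpha^{i,n}{\bf 1}_{\rrbracket\theta,T\rrbracket}$, admissible by the same argument. Since $\beta^{i,n}$ agrees with $\alpha^i$ on $\llbracket 0,\theta\rrbracket$, the densities $M^{\beta^{i,n}\otimes_i\alpha^{-i}}$ and $M^{\alpha^i\otimes_i\alpha^{-i}}$ coincide on $\Fc_\theta$, so the flow property applied to $\beta^{i,n}$ reads
\begin{equation*}
L^i(\tau,\alpha^{-i},\beta^{i,n}) = \E_\tau^{\P^\alpha}\!\left[ e^{-\int_\tau^\theta \rho_s^i(X_s,\chi_s,\alpha_s)\,\mathrm{d}s} L^i(\theta,\alpha^{-i},\alpha^{i,n}) + \int_\tau^\theta e^{-\int_\tau^s \rho_u^i\,\mathrm{d}u}\bigl(u_A^i(\chi_s)-c_s^i(X_s,\alpha_s)\bigr)\mathrm{d}s \right].
\end{equation*}
By monotone convergence on the right and the bound $L^i(\tau,\alpha^{-i},\beta^{i,n}) \leq V^i(\tau,\alpha^{-i},\xi^i,\chi^i)$ on the left, one obtains the reverse inequality after taking the essential supremum over $\alpha^i$.

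The main obstacle is the rigorous handling of the concatenation step: verifying that $\beta^{i,n}$ is admissible requires a careful pasting of the Doléans-Dade exponentials at the predictable time $\theta$, and justifying the passage to the limit requires uniform integrability. The integrability assumptions $(i)$--$(ii)$ in Definition \ref{def-admisible-contracts}, together with the constraint $1/p_\alpha + 1/\hat p_\alpha < 1$, are precisely tailored so that Hölder's inequality gives the uniform $L^1(\P^\alpha)$ domination needed to apply monotone/dominated convergence to the terms involving $L^i(\theta,\alpha^{-i},\alpha^{i,n})$ multiplied by the discount factor $\exp(-\int_\tau^\theta \rho_s^i \mathrm{d}s)$. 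This is the technical heart of the proof and the reason these integrability conditions enter the definition of an admissible contract.
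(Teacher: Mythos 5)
Your proof is correct and follows essentially the same route as the paper's: split $L^i$ at $\theta$ via the tower property, bound $L^i(\theta,\cdot)$ by $V^i(\theta,\cdot)$ for the easy inequality, and for the converse use controls concatenated at $\theta$ (whose densities agree with those of $\alpha$ on $\Fc_\theta$), upward directedness, Neveu's lemma and monotone convergence. The only notable difference is cosmetic: by keeping $\alpha^{i,1}$ on $\llbracket 0,\theta\rrbracket$ in the directedness construction you make the mixed control genuinely $\F$-predictable, which is a slightly cleaner formulation than the paper's $\epsilon_3$, defined with the $\Fc_\theta$-measurable indicator on all of $[0,T]$.
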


\begin{proof}
To simplify the notation, we just write $V^i_\tau = V^i(\tau,\alpha^{-i},\xi^i,\chi^i)$ and 
\[
\tilde V^i_\tau = \esssup_{\alpha^i\in\Ac^i(\alpha^{-i})} \E_\tau^{\P^{\alpha}} \bigg[   e^{-\int_\tau^\theta \rho^i_s(X_s,\chi_s, \alpha_s) \mathrm{d}s} ~V^i_\theta + \int_\tau^\theta e^{-\int_\tau^s \rho^i_u(X_u,\chi_u,\alpha_u) \mathrm{d}u} \big( u^i_A(\chi_s) - c^i_s(X_s,\alpha_s) \big) \mathrm{d}s  \bigg]. 
\]

$(i)$ From Equation \eqref{continuationutility1} and by the tower property we have 
\begin{align*}
V^i_\tau = \esssup_{\alpha^i\in\Ac^i(\alpha^{-i})} \E_\tau^{\P^{\alpha}} \bigg[   e^{-\int_\tau^\theta \rho^i_s(X_s,\chi_s, \alpha_s) \mathrm{d}s} ~L^i(\theta,\alpha^{-i},\alpha^i)  + \int_\tau^\theta e^{-\int_\tau^s \rho^i_u(X_u,\chi_u,\alpha_u) \mathrm{d}u} \big( u^i_A(\chi_s) - c^i_s(X_s,\alpha_s) \big) \mathrm{d}s  \bigg].
\end{align*}

Hence it follows $V^i_\tau  \leq \tilde V^i_\tau$, since by definition we have $\P$-a.s. that $L^i(\theta,\alpha^{-i},\alpha^i) \leq V^i_\theta$.

\medskip

$(ii)$  Let $\alpha^i, \epsilon^i\in\Ac^i(\alpha^{-i})$ and define the control $\widetilde \alpha^i_u = \alpha^i_u \mathds{1}_{0 \leq u < \theta} + \epsilon^i_u \mathds{1}_{\theta \leq u \leq T}$, for $u \in [0,T]$. Note that $\widetilde \alpha^i$ naturally belongs to $\Ac^i(\alpha^{-i})$.\footnote{Indeed, the martingale property of $M^\alpha$ is preserved under concatenation. Since $\theta$ is predictable, so is $\widetilde \alpha^i$.} By the tower property we have, denoting $\tilde\alpha=\tilde\alpha^i \otimes_i \alpha^{-i}$
\begin{align*}
V_{\tau}^{i} & \geq L^i(\tau,\alpha^{-i},\tilde\alpha^i) \\
&  = \E_\tau^{\P^{\tilde\alpha}} \bigg[   e^{-\int_\tau^\theta \rho^i_s(X_s,\chi_s, \tilde\alpha_s) \mathrm{d}s} ~L^i(\theta,\alpha^{-i},\tilde\alpha^i)  + \int_\tau^\theta e^{-\int_\tau^s \rho^i_u(X_u,\chi_u,\tilde\alpha_u) \mathrm{d}u} \big( u^i_A(\chi_s) - c^i_s(X_s,\tilde\alpha_s) \big) \mathrm{d}s  \bigg]. 
\end{align*}
Note that the quotient $M_{T}^{{\tilde \alpha}}/M_{\theta}^{{\tilde \alpha}}$ does not depend on the values of $\tilde\alpha$ before $\theta$. In particular, $L^i(\theta,\alpha^{-i},\tilde\alpha^i)$ does not depend on the values of $\tilde\alpha$ before $\theta$ and we have $L^i(\theta,\alpha^{-i},\tilde\alpha^i)=L^i(\theta,\alpha^{-i},\epsilon^i)$. It follows therefore
\begin{align*}
V_{\tau}^{i}  &\geq  \E_\tau^{\P} \bigg[ \E^{\P}_\theta \bigg[ \frac{M^{\tilde\alpha}_T}{M^{\tilde\alpha}_\tau} \bigg(e^{-\int_\tau^\theta \rho^i_s(X_s,\chi_s, \alpha_s) \mathrm{d}s} ~L^i(\theta,\alpha^{-i},\epsilon^i)  + \int_\tau^\theta e^{-\int_\tau^s \rho^i_u(X_u,\chi_u,\alpha_u) \mathrm{d}u} \big( u^i_A(\chi_s) - c^i_s(X_s,\alpha_s) \big) \mathrm{d}s \bigg) \bigg]\bigg] \\
& = \E_\tau^{\P} \bigg[  \frac{M^{\tilde\alpha}_\theta}{M^{\tilde\alpha}_\tau} \bigg(e^{-\int_\tau^\theta \rho^i_s(X_s,\chi_s, \alpha_s) \mathrm{d}s} ~L^i(\theta,\alpha^{-i},\epsilon^i)  + \int_\tau^\theta e^{-\int_\tau^s \rho^i_u(X_u,\chi_u,\alpha_u) \mathrm{d}u} \big( u^i_A(\chi_s) - c^i_s(X_s,\alpha_s) \big) \mathrm{d}s \bigg) \bigg] \\
& = \E_\tau^{\P} \bigg[  \frac{M^{\alpha}_\theta}{M^{\alpha}_\tau} \bigg(e^{-\int_\tau^\theta \rho^i_s(X_s,\chi_s, \alpha_s) \mathrm{d}s} ~L^i(\theta,\alpha^{-i},\epsilon^i)  + \int_\tau^\theta e^{-\int_\tau^s \rho^i_u(X_u,\chi_u,\alpha_u) \mathrm{d}u} \big( u^i_A(\chi_s) - c^i_s(X_s,\alpha_s) \big) \mathrm{d}s \bigg) \bigg] \\
& =  \E_\tau^{\P^{\alpha}} \bigg[  e^{-\int_\tau^\theta \rho^i_s(X_s,\chi_s, \alpha_s) \mathrm{d}s} ~L^i(\theta,\alpha^{-i},\epsilon^i)  + \int_\tau^\theta e^{-\int_\tau^s \rho^i_u(X_u,\chi_u,\alpha_u) \mathrm{d}u} \big( u^i_A(\chi_s) - c^i_s(X_s,\alpha_s) \big) \mathrm{d}s  \bigg].
\end{align*}
To conclude, note that the family $\left(L^i(\theta,\alpha^{-i},\epsilon^i) \right)_{\epsilon^i \in \Ac^i(\alpha^{-i})}$ is directed upwards. Indeed, for any $\epsilon_1$ and $\epsilon_2$ in $\Ac^i(\alpha^{-i})$, define 
\begin{equation*}
\epsilon_3 := \epsilon_1\mathds{1}_{L^i(\theta,\alpha^{-i},\epsilon_1) \geq L^i(\theta,\alpha^{-i},\epsilon_2)} + \epsilon_2\mathds{1}_{L^i(\theta,\alpha^{-i},\epsilon_1) < L^i(\theta,\alpha^{-i},\epsilon_2)}.
\end{equation*}
 By definition, $\epsilon_3$ belongs to $\Ac^i(\alpha^{-i})$ and we also have $L^i(\theta,\alpha^{-i},\epsilon_3) \geq  \max\left\{L^i(\theta,\alpha^{-i},\epsilon_1), L^i(\theta,\alpha^{-i},\epsilon_2)\right\}$. From \cite{neveu1972}, Proposition VI.1.1, there exists a sequence $(\epsilon_n)_{n \in \N}$ in $\Ac^i(\alpha^{-i})$ such that $\left(L^i(\tau,\alpha^{-i}, \epsilon_{n})\right)_{n \in \N}$ is non-decreasing almost surely and $\esssup\limits_{\epsilon \in \Ac^i(\alpha^{-i})} L^i(\tau,\alpha^{-i}, \epsilon) = \lim\limits_{n \rightarrow \infty} L^i(\tau,\alpha^{-i}, \epsilon_{n})$ almost surely. We have finally from the monotone convergence theorem
\begin{align*}
V_{\tau}^{i} & \geq \lim\limits_{n \rightarrow \infty} \E_\tau^{\P^{\alpha}} \bigg[  e^{-\int_\tau^\theta \rho^i_s(X_s,\chi_s, \alpha_s) \mathrm{d}s} ~L^i(\theta,\alpha^{-i},\epsilon_n)  + \int_\tau^\theta e^{-\int_\tau^s \rho^i_u(X_u,\chi_u,\alpha_u) \mathrm{d}u} \big( u^i_A(\chi_s) - c^i_s(X_s,\alpha_s) \big) \mathrm{d}s  \bigg] \\
&  = \E_\tau^{\P^{\alpha}} \bigg[ \lim\limits_{n \rightarrow \infty}  e^{-\int_\tau^\theta \rho^i_s(X_s,\chi_s, \alpha_s) \mathrm{d}s} ~L^i(\theta,\alpha^{-i},\epsilon_n)  + \int_\tau^\theta e^{-\int_\tau^s \rho^i_u(X_u,\chi_u,\alpha_u) \mathrm{d}u} \big( u^i_A(\chi_s) - c^i_s(X_s,\alpha_s) \big) \mathrm{d}s  \bigg] = \tilde V_\tau^i.
\end{align*}

\end{proof}

\subsection{On the martingale representation decomposition} \label{app:MRP-coordinates}

In this section, we let $X=((X^1)^\top,\dots,(X^N)^\top)^\top$ and we assume the $X^i$'s do not jump simultaneously. For every $i\in I$, we denote by $T^i_k$ the $k-$th jump time of the process $X^i$ and we define the jump times of $X$ as follows
\[
T_1 =  \inf \{ T_1^i: i=1,\dots,N \}, \quad T_n= \inf\{ T_k^i: T_{n-1} < T_k^i, \ i=1,\dots,d, \ k\geq 1 \}.
\]
We define also $T_\infty := \lim_{n\to\infty} T_n$.
\begin{lemma}\label{lemma:MRP-coordinates}
Suppose $\P(T_\infty < \infty)=0$. Then for any $H\in \L_\text{loc}^1(\mu_X)$ there exist $H^1,\dots,H^N$, with $H^i\in\L_\text{loc}^1(\mu_{X^i})$, such that
\[
\int_0^t \int_{\R^{dN}\setminus\{0\}} H_s(u) \mu_X(\emph{d}s,\emph{d}u) =  \sum_{i=1}^N \int_0^t \int_{\R^d\setminus\{0\}} H_s^i(u) \mu_{X^i}(\emph{d}s,\emph{d}u).
\]
\end{lemma}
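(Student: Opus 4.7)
The plan is to construct the coordinate functions $H^i$ by projection, then to verify the identity jump by jump using the non-simultaneity hypothesis, and finally to check local integrability.

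First I would define, for $i\in I$, the $\F$-predictable function
\[
H^i_s(u) := H_s\bigl(I_i[u]\bigr), \quad u\in\R^d\setminus\{0\},
\]
where $I_i:\R^d\to\R^{dN}$ is the embedding into the $i$-th block. Measurability is inherited from $H$ since $I_i$ is linear and continuous.

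Next I would use the assumption that the $X^i$'s do not jump simultaneously, together with $\P(T_\infty<\infty)=0$, to describe the jumps of $X$. The latter condition ensures that on $[0,T]$ the set of jump times $\{T_n\}$ of $X$ is exactly $\bigcup_{i=1}^N\{T_k^i: k\geq 1\}$, and the former ensures this union is disjoint $\P$-a.s. Therefore, for every jump time $s$ of $X$ there is a unique $i(s)\in I$ such that $X^{i(s)}$ jumps at $s$ and the other components do not, which gives $\Delta X_s = I_{i(s)}\bigl[\Delta X^{i(s)}_s\bigr]$ pathwise on a $\P$-full set.

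With this structure in hand, I would rewrite both sides of the claimed identity as pathwise sums over jump times. On the one hand,
\[
\int_0^t\!\!\int_{\R^{dN}\setminus\{0\}} H_s(u)\,\mu_X(\mathrm{d}s,\mathrm{d}u) = \sum_{s\leq t,\,\Delta X_s\neq 0} H_s(\Delta X_s) = \sum_{i=1}^N \sum_{s\leq t,\,\Delta X^i_s\neq 0} H_s\bigl(I_i[\Delta X^i_s]\bigr),
\]
where the second equality partitions the jump times of $X$ according to which component jumps. On the other hand, by definition of $H^i$,
\[
\sum_{i=1}^N \int_0^t\!\!\int_{\R^d\setminus\{0\}} H^i_s(u)\,\mu_{X^i}(\mathrm{d}s,\mathrm{d}u) = \sum_{i=1}^N \sum_{s\leq t,\,\Delta X^i_s\neq 0} H_s\bigl(I_i[\Delta X^i_s]\bigr),
\]
which matches the previous display. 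This is the main step.

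Finally, for the local integrability $H^i\in\L^1_{\mathrm{loc}}(\mu_{X^i})$, I would note that at any jump time $s$ of $X^i$ (and only then) the integrand $\bigl(\int_{\R^d\setminus\{0\}} H^i_s(u)\,\mu_{X^i}(\{s\},\mathrm{d}u)\bigr)^{1/2}$ equals $|H^i_s(\Delta X^i_s)|^{1/2} = |H_s(\Delta X_s)|^{1/2} = \bigl(\int_{\R^{dN}\setminus\{0\}} H_s(u)\,\mu_X(\{s\},\mathrm{d}u)\bigr)^{1/2}$, while it vanishes at times when $X^i$ does not jump. Summing over $s\leq\cdot$ yields a process dominated termwise by the analogous process for $H$, which is $\P$-locally integrable by hypothesis; hence each $H^i$ lies in $\L^1_{\mathrm{loc}}(\mu_{X^i})$. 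I do not expect any serious obstacle beyond being careful with the pathwise identification of jumps, which is precisely what the assumption $\P(T_\infty<\infty)=0$ together with non-simultaneity is designed to provide.
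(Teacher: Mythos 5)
Your proposal is correct and follows essentially the same route as the paper: you define $H^i_s(u)=H_s(I_i[u])$ and use non-simultaneity together with $\P(T_\infty<\infty)=0$ to partition the jumps of $X$ by component, exactly as the paper does when it shows $\mu_X$ is carried by the axis-blocks and then splits the integral. The only difference is cosmetic (you argue pathwise on sums over jump times rather than first stating the measure identity), and you additionally spell out the $\L^1_{\mathrm{loc}}(\mu_{X^i})$ check, which the paper leaves implicit.
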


\begin{proof}
$(i)$ Notice that the measure $\mu_X$ is supported over the `axis-blocks' of $\R^{dN}$. To be more precise, since each $X^i$ jumps independently we have
\[
\mu_X\big((0,t]\times A \big) = \sum_{i=1}^N \mu_{X^i} \big( (0,t]\times A^i \big), \quad \text{for every } t\in[0,T], \ A\in\Bc(\R^{dN}\setminus\{0\}),
\]
where each set $A^i$ is the projection over $\R^d$ of the set $A \cap ( \R^d \otimes_i 0^{d(N-1)} )$. Indeed, we have
\begin{align*}
\mu_X\big((0,t]\times A \big)  & = \sum_{n\leq 1} {\bf 1}_{\{T_n \geq t\}}   {\bf 1}_{\{\Delta X_{T_n} \in A\}}  = \sum_{n\geq 1} {\bf 1}_{\{T_n \leq t\}}  \sum_{i=1}^N \sum_{k\geq 1} {\bf 1}_{\{\Delta X_{T_n}^i \in A^i\}} {\bf 1}_{\{ T_n=T^i_k\}} \\
& = \sum_{n\geq 1}  \sum_{i=1}^N \sum_{k\geq 1} {\bf 1}_{\{T_n \leq t\}}  {\bf 1}_{\{\Delta X_{T_n}^i \in A^i\}} {\bf 1}_{\{ T_n=T^i_k\}}  = \sum_{n\geq 1}  \sum_{i=1}^N \sum_{k\geq 1} {\bf 1}_{\{T_k^i \leq t\}}  {\bf 1}_{\{\Delta X_{T_k^i}^i \in A^i\}} {\bf 1}_{\{ T_n=T^i_k\}}  \\
& =\sum_{i=1}^N   \sum_{k\geq 1}  {\bf 1}_{\{T_k^i \leq t\}}  {\bf 1}_{\{\Delta X_{T_k^i}^i \in A^i\}}   \sum_{n\geq 1}  {\bf 1}_{\{ T_n=T^i_k\}} =\sum_{i=1}^N   \sum_{k\geq 1}  {\bf 1}_{\{T_k^i \leq t\}}  {\bf 1}_{\{\Delta X_{T_k^i}^i \in A^i\}}  \\
& = \sum_{i=1}^N \mu_{X^i}\big( (0,t]\times A^i \big),
\end{align*}
where we used, since $\P(T_\infty < \infty)=0$, that for every $i\in I$, $k\geq 1$ we have $\sum_{n\geq 1}  {\bf 1}_{\{ T_n=T^i_k\}} =1$.

\medskip
$(ii)$ We can decompose then the integral
\begin{align*}
\int_0^t \int_{\R^{dN} \setminus\{0\}} H_s(u) \mu_X(\textrm{d}s,\textrm{d}u) & =  \sum_{i=1}^N \int_0^t \int_{ \R^d \setminus\{0\} \otimes_i 0^{d(N-1)}} H_s(u) \mu_X(\textrm{d}s,\textrm{d}u) \\
& =  \sum_{i=1}^N \int_0^t \int_{ \R^d \setminus\{0\} \otimes_i 0^{d(N-1)}} H_s(u^i \otimes_i 0^{d(N-1)}) \mu_X(\textrm{d}s,\textrm{d}u) \\
& =  \sum_{i=1}^N \int_0^t \int_{\R^d\setminus\{0\}} H_s^i (u^i)  \int_{ 0^{d(N-1)}}  \mu_X(\textrm{d}s,\textrm{d}u) \\
& =  \sum_{i=1}^N \int_0^t \int_{\R^d\setminus\{0\}} H_s^i (v) \mu_{X^i} (\textrm{d}s,\textrm{d}v),
\end{align*}
with the predictable functions $H_s^i(v):= H_s(v\otimes_i 0^{d(N-1)})$.

\end{proof}

\section{Proofs for the agents' game} \label{app:proofs-agents}

\begin{proof}[\bf Proof of Proposition \ref{prop-agent-bsde} ]
$(i)$ Let $\alpha^\star\in\text{NE}(\xi,\chi)$ and fix the agent $i\in I$. The action $\alpha^{\star,i}$ maximizes his utility, given the actions of the others, that is
\[
V_0^i(\alpha^{\star,-i},\xi^i,\chi^i) = U_0^i(\alpha^{\star,i},\alpha^{\star,-i},\xi^i,\chi^i).
\]
We recall the family of random variables $V^i(\tau,\alpha^{\star,-i},\xi^i,\chi^i)$ defined in \eqref{eq:dynamic-value-agent}. By Definition \ref{def-admisible-contracts} (i), it follows from the admissibility of the contract that for any $\alpha^i \in\Ac^i(\alpha^{\star,-i})$ we can find $p_i=p_i(\alpha^i\otimes_i\alpha^{\star,-i})>1$ such that
\begin{equation}\label{eq:V-tau-in-Lq}
\sup_{\tau\in\Tc_{0,T}}  \E^{\P^{\alpha^i\otimes_i\alpha^{\star,-i} }}\big[| V^i(\tau,\alpha^{\star,-i},\xi^i,\chi^i)  |^{p_i} \big]  < \infty.
\end{equation}

Next, we have the Dynamic Programming Principle\footnote{see Proposition \ref{prop-DPP}, or the more general result for the case without jumps \cite[Theorem 3.4]{karoui2013capacities2}.},
for any predictable $\theta\in\Tc_{0,T}$ such that $\tau\leq \theta$ it holds $\P-$a.s.
\begin{align*}
V^i(\tau,\alpha^{\star,-i},\xi^i,\chi^i) = \esssup_{\alpha^i\in\Ac^i(\alpha^{\star,-i})} \E_\tau^{\P^{\alpha^i\otimes_i\alpha^{\star,-i} }} \bigg[ &  e^{-\int_\tau^\theta \rho^i_s(X_s,\chi_s,\hat \alpha_s) \mathrm{d}s} ~V^i(\theta,\alpha^{\star,-i},\xi^i,\chi^i) \\
& + \int_\tau^\theta e^{-\int_\tau^s \rho^i_u(X_u,\chi_u,\hat\alpha_u) \mathrm{d}u} \big( u^i_A(\chi_s) - c^i_s(X_s,\hat\alpha_s) \big) \mathrm{d}s  \bigg],
\end{align*}
with the notation $\hat \alpha := \alpha^i \otimes_i \alpha^{\star,-i} $. Fix now $\alpha^i\in\Ac^i(\alpha^{\star,-i})$ and define the family, for $\tau\in\Tc_{0,T}$
\[
R^{\alpha^i}(\tau) := V^i(\tau,\alpha^{\star,-i},\xi^i,\chi^i) ~  e^{- \int_0^\tau \rho^i_s(X_s,\chi_s,\hat\alpha_s) \mathrm{d}s} + \int_0^\tau e^{-\int_0^s \rho^i_u(X_u,\chi_u,\hat\alpha_u) \mathrm{d}u} \big( u^i_A(\chi_s) - c^i_s(X_s,\alpha_s) \big) \mathrm{d}s.
\]
It follows that the family $(R^{\alpha^i}(\tau))_{\tau\in\Tc_{0,T}}$ is a $\P^{\alpha^i\otimes_i\alpha^{\star,-i} }-$supermartingale\footnote{The family is $\P^{\alpha^i\otimes_i\alpha^{\star,-i} }-$integrable from \eqref{eq:V-tau-in-Lq}, H\"older's inequality and Definition \ref{def-admisible-contracts} (ii)-(iii). } system (see \cite[Definition 10]{della1981sur} or  \cite[Section 3.3]{bouchard2016}). Consequently, by \cite[Theorem 15]{della1981sur}, $(R^{\alpha^i}(\tau))_{\tau\in\Tc_{0,T}}$ can be aggregated in a unique (up to indistinguishability) optional process which is given by
\[
R_t^{\alpha^i}: = V^i_t(\alpha^{\star,-i},\xi^i,\chi^i) ~  e^{- \int_0^t \rho^i_s(X_s,\chi_s,\hat\alpha_s) \mathrm{d}s} + \int_0^t e^{-\int_0^s \rho^i_u(X_u,\chi_u,\hat\alpha_u) \mathrm{d}u} \big( u^i_A(\chi_s) - c^i_s(X_s,\hat\alpha_s) \big) \mathrm{d}s,
\]
where we have $\P-$a.s.
\[
V^i_t(\alpha^{\star,-i},\xi^i,\chi^i)  = \esssup_{\alpha^i\in\Ac^i(\alpha^{\star,-i})} \E_t^{\P^{\hat\alpha }} \left[ e^{-\int_t^T \rho^i_s(X_s,\chi_s,\hat\alpha_s) \mathrm{d}s} ~ \Uc^i_A (\xi^i) + \int_t^T e^{-\int_t^s \rho^i_u(X_u,\chi_u,\hat\alpha_u) \mathrm{d}u} \big( u^i_A(\chi_s) - c^i_s(X_s,\hat\alpha_s) \big) \mathrm{d}s \right].
\]
We deduce that $\forall\alpha^i\in\Ac^i(\alpha^{\star,-i})$, the process $R^{\alpha^i}$ is a $\P^{\alpha^i\otimes_i\alpha^{\star,-i} }-$supermartingale. Note that $R_0^{\alpha^i}=V_0^i(\alpha^{\star,-i},\xi^i,\chi^i)$. Therefore, for any $t\in[0,T]$
\begin{align*}
V_0^i(\alpha^{\star,-i},\xi^i,\chi^i)= R_0^{\alpha^{\star,i}} & \geq  \E^{\P^{\alpha^\star}} \big[ R_t^{\alpha^{\star,i}}\big]  \geq \E^{\P^{\alpha^\star}} \big[ R_T^{\alpha^{\star,i}} \big]  =U_0^i(\alpha^{\star,i},\alpha^{\star,-i},\xi^i,\chi^i) = V_0^i(\alpha^{\star,-i},\xi^i,\chi^i),
\end{align*}
and it follows that $R^{\alpha^{\star,i}}$ is a $\P^{\alpha^\star}-$martingale. From the martingale representation theorem\footnote{Notice that $\P^{\alpha^{\star}}$ satisfies also the martingale representation property, due to Theorem III.5.24 in \cite{jacod2003limit}. 
}, there exists processes $Z^i \in \H^{2,1\times dN}_\text{loc}(X)$ and $H^i=(H^{i,1},\dots,H^{i,N})$ with each $H^{i,\ell} \in \L^{1}_{\text{loc}}(\mu_{X^\ell})$ such that it holds $\P^{\alpha^\star}$-a.s.
\[
\mathrm{d} R^{\alpha^{\star,i}}_t =  e^{- \int_0^t \rho^i_s(X_s,\chi_s,\alpha_s^\star) \mathrm{d}s} \bigg(  Z^i_t \mathrm{d} X_t^{c,\alpha^\star} +  \sum_{\ell=1}^N \int_{\R^{d}\setminus \{0\}} H^{i,\ell}_t(x)  \big( \mu_{X^\ell}(\textrm{d}t,\textrm{d}x) - \nu_t^{\ell,\alpha^\star}(\textrm{d}x)\textrm{d}t   \big) \bigg).
\]
By applying It\^o's formula we obtain
\begin{align*}
\mathrm{d}V^i_t(\alpha^{\star,-i},\xi^i,\chi^i) & = \big(  \rho^i_t(X_t,\chi_t,\alpha^\star_t) V^i_t(\alpha^{\star,-i},\xi^i,\chi^i) -u^i_A(\chi_t) + c^i_t(X_t,\alpha^\star_t) - Z^i_t \Sigma_t(X_t)b_t(X_t,\alpha^\star_t) \big) \mathrm{d}t + Z^i_t \mathrm{d}X_t^c \\
&  \hspace{.3cm} +  \sum_{\ell=1}^N \int_{\R^{d}\setminus \{0\}} H^{i,\ell}_t(x)  \big( \mu_{X^\ell}(\textrm{d}t,\textrm{d}x) - \nu_t^{\ell,\alpha^\star}(\textrm{d}x)\textrm{d}t   \big), \quad \P-\text{a.s.}
\end{align*}
From now on write $Y^i_t:= V^i_t(\alpha^{\star,-i},\xi^i,\chi^i)$ and define $Y:=(Y^1,\dots,Y^N)^\top$, $Z:=((Z^1)^\top,\dots,(Z^N)^\top)^\top$, $H:=((H^1)^\top,\dots,(H^N)^\top)^\top$. Then we have that $Y^i_T = \Uc^i_A(\xi^i)$ and
\[
\mathrm{d}Y^i_t = -f^i_t(X_t,Y_t,Z_t,H_t,\chi_s,\alpha^\star_t) \mathrm{d}t + Z^i_t \mathrm{d}X^c_t + \int_{\R^{d}\setminus \{0\}} H^{i}_t(x)\vec{\mu}_X(\textrm{d}t,\textrm{d}x) , \quad \P-\text{a.s.}
\]
Next, note that for any $\alpha^i\in\Ac^i(\alpha^{\star,-i})$ 
\begin{align*}
 \textrm{d} R_s^{\alpha^i}  = &   e^{- \int_0^s \rho^i_u(X_u,\chi_u,\hat\alpha_u) \mathrm{d}u} (\mathrm{d}Y^i_s - \rho^i_s(X_s,\chi_s,\hat\alpha_s)Y^i_s \mathrm{d}s) + e^{-\int_0^s \rho^i_u(X_u,\chi_u,\hat\alpha_u) \mathrm{d}u} \big( u^i_A(\chi_s) - c^i_s(X_s,\hat\alpha_s) \big) \mathrm{d}s\\
 = &  e^{- \int_0^s \rho^i_u(X_u,\chi_u,\hat\alpha_u) \mathrm{d}u} \big(  ( \rho^i_s(X_s,\chi_s,\alpha_s^\star)-\rho^i_s(X_s,\chi_s,\hat\alpha_s) ) Y^i_s + c^i_s(X_s,\alpha^\star_s) - c^i_s(X_s,\hat\alpha_s)  \big) \mathrm{d}s   \\
 &   + e^{- \int_0^s \rho^i_u(X_u,\chi_u,\hat\alpha_u) \mathrm{d}u} \bigg( Z^i_s  \mathrm{d}X^c_s -  Z^i_s\Sigma_s(X_s)b_s(X_s,\alpha^\star_s) \mathrm{d}s  + \int_{\R^d\setminus\{0\}} H_t^i(x) (\vec{\mu}_X (\textrm{d}t,\textrm{d} x) - \vec{\nu}^{\alpha^\star}_t(\textrm{d}x) \textrm{d}t  )\bigg) \\
 = & e^{- \int_0^s \rho^i_u(X_u,\chi_u,\hat\alpha_u) \mathrm{d}u}   \big( f^i_s(X_s,Y_s,Z_s,\chi_s,\hat\alpha_s) - f^i_s(X_s,Y_s,Z_s,\chi_s,\alpha^\star_s) \big) \mathrm{d}s  \\ 
 &  e^{-\int_0^s \rho^i_u(X_u,\chi_u,\hat\alpha_u) \mathrm{d}u} Z^i_s\Sigma_s(X_s)\mathrm{d}W_s^{\hat\alpha} +  e^{-\int_0^s \rho^i_u(X_u,\chi_u,\hat\alpha_u) \mathrm{d}u} \bigg(  \int_{\R^d\setminus\{0\}} H_t^i(x) (\vec{\mu}_X (\textrm{d}t,\textrm{d} x) - \vec{\nu}^{\hat\alpha}_t(\textrm{d}x) \textrm{d}t  ) \bigg)
\end{align*}

Since $R_t^{\alpha^i}$ is a $\P^{\alpha^i\otimes_i \alpha^{\star,-i} }$--supermartingale, we conclude that $\alpha^\star$ satisfies \eqref{eq:a-star} and $(Y,Z,H)$ is a solution to BSDE \eqref{eq:bsde-agent}.

\medskip
To conclude, let us prove that $(Z,H)\in\Vc^{Y_0,\chi}$. Since $Y^i = Y^{Y_0,Z,H,\chi,i}$, with $Y^i_0\in\R$, it follows from \eqref{eq:V-tau-in-Lq} that $Y^{Y_0,Z,H,\chi}$ has the required integrability, by taking $q_{\alpha^i}=p_i$.

\medskip
$(ii)$ Let $(Y,Z,H)$ be a solution to BSDE \eqref{eq:bsde-agent} such that $(Z,H)\in\Vc^{Y_0,\chi}$ and define
\[
\alpha^\star_s = a^\star(s,X_s,Y_s,Z_s,H_s,\chi_s), \quad \text{d}t\otimes\text{d}\P-\text{a.s. over } [0,T]\times \Omega.
\] 
Then we have $\alpha^\star\in\Ac$. Fix $i\in I$, since $Y^i_T=\Uc^i_A(\xi^i)$, for any $\alpha^i\in\Ac^i(\alpha^{\star,-i})$ we have by It\^o's formula
\begin{align*}
U_0^i(\alpha^i,\alpha^{\star,-i},\xi^i,\chi^i) & = \E^{\P^{\alpha^i\otimes_i\alpha^{\star,-i} }} \left[ e^{-\int_0^T \rho^i_s(X_s,\chi_s,\hat\alpha_s) \mathrm{d}s} ~ Y^i_T + \int_0^T e^{-\int_0^s \rho^i_u(X_u,\chi_u,\hat\alpha_u) \mathrm{d}u} \big( u^i_A(\chi_s) - c^i_s(X_s,\hat\alpha_s) \big) \mathrm{d}s  \right] \\
& = \E^{\P^{\alpha^i\otimes_i\alpha^{\star,-i} }} \bigg[ Y^i_0 + \int_0^T e^{-\int_0^s \rho^i_u(X_u,\chi_u,\hat\alpha_u) \mathrm{d}u} \left( f_s^i(X_s,Y_s,Z_s,\chi_s,\hat\alpha_s)  - F^i_s(X_s,Y_s,Z_s,\chi_s) \right) \mathrm{d}s   \bigg]\\
& ~~ + \E^{\P^{\alpha^i\otimes_i\alpha^{\star,-i} }} \bigg[ \int_0^T e^{-\int_0^s \rho^i_u(X_u,\chi_u,\hat\alpha_u) \mathrm{d}u}  \bigg(   Z^{i,:}_s   \mathrm{d}X^{c,\hat\alpha}_s +    \int_{\R^d\setminus\{0\}} H_t^{i,:}(x) (\vec{\mu}_X (\textrm{d}t,\textrm{d} x) - \vec{\nu}^{\hat\alpha}_t(\textrm{d}x) \textrm{d}t  ) \bigg)  \bigg] 
\end{align*}
Notice that $Y^i = Y^{Y_0,Z,H,\chi,i}$, with $Y^i_0\in\R$.  By the definition of the class $\Vc^{Y_0,\chi}$, it follows that $Y^i$ is a process of class (D) under $\P^{\alpha^i\otimes_i\alpha^{\star,-i} }$. From Definition \ref{def-admisible-contracts} $(ii)$, so is the process $ Y e^{-\int_0^\cdot \rho^i_s(X_s,\chi_s,\hat\alpha_s) \mathrm{d}s}$, which implies that the second term above is a $\P^{\alpha^i\otimes_i\alpha^{\star,-i} }-$martingale. We have thus
\[
U_0^i(\alpha^i,\alpha^{\star,-i},\xi^i,\chi^i) = Y^i_0 + \E^{\P^{\alpha^i\otimes_i\alpha^{\star,-i} }} \bigg[  \int_0^T e^{-\int_0^s \rho^i_u(X_u,\chi_u,\hat\alpha_u) \mathrm{d}u} \left( f_s^i(X_s,Y_s,Z_s,\chi_s,\hat\alpha_s)  - F^i_s(X_s,Y_s,Z_s,\chi_s) \right) \mathrm{d}s   \bigg].
\]
Therefore, $U_0^i(\alpha^i,\alpha^{\star,-i},\xi^i,\chi^i) \leq Y^i_0$ for every $\alpha^i\in\Ac^i(\alpha^{\star,-i})$ and $U_0^i(\alpha^{\star,i},\alpha^{\star,-i},\xi^i,\chi^i)= Y_0^i$. We conclude that $\alpha^\star\in\text{NE}(\xi,\chi)$.
\end{proof}

\bigskip
\begin{proof}[\bf Proof of Corollary \ref{cor-optimal-response}]
The fact that $a^{\star,y,Z,H,\chi}$ is a Nash equilibrium for the contract $(\xi,\chi)$ follows from Proposition \ref{prop-agent-bsde}, by noting that $(Y^{y,Z,H,\chi},Z,H)$ is a solution to BSDE \eqref{eq:bsde-agent}. Moreover, from part $(ii)$ of the proof of the proposition we obtain that $V_0^i(\alpha^{\star,-i},\xi^i,\chi^i)=Y_0^{y,Z,H,\chi,i}=y^i$.
\end{proof}

\bigskip
\begin{proof}[\bf Proof of Proposition \ref{prop-agent-bsde-cert-equiv}]
$(i)$ Let $\alpha^\star\in\text{NE}(\xi,\chi)$ and fix the agent $i\in I$. By repeating the arguments from Proposition \ref{prop-agent-bsde}, for every $\alpha^i \in\Ac^i(\alpha^{\star,-i})$ we can find $p_i=p_i(\alpha^i\otimes_i\alpha^{\star,-i})>1$ such that
\begin{equation}\label{eq:V-tau-in-Lq-cert-equiv}
\sup_{\tau\in\Tc_{0,T}}  \E^{\P^{\alpha^i\otimes_i\alpha^{\star,-i} }} [|V^i(\tau,\alpha^{\star,-i},\xi^i,\chi^i)|^{p_i} ]  < \infty.
\end{equation}
It follows from the Dynamic Programming Principle that, for every $\alpha^i \in\Ac^i(\alpha^{\star,-i})$, the family 
\[
\bigg(  V^i(\tau,\alpha^{\star,-i},\xi^i,\chi^i) ~  e^{- \int_0^\tau \rho^i_s(X_s,\chi_s,\hat\alpha_s) \mathrm{d}s}  \bigg)_{\tau\in\Tc_{0,T}}
\]
can be aggregated in a unique (up to indistinguishability) optional process which is given by
\[
R_t^{\alpha^i} : = V_t^i(\alpha^{\star,-i},\xi^i,\chi^i) ~  e^{- \int_0^t \rho^i_s(X_s,\chi_s,\hat\alpha_s) \mathrm{d}s},
\]
where
\[
V_t^i(\alpha^{\star,-i},\xi^i,\chi^i)= \esssup_{ \alpha^i \in\Ac^i(\alpha^{\star,-i}) }   \E^{\P^{\alpha^i\otimes_i\alpha^{\star,-i} }} \left[ e^{-\int_t^T \rho^i_s(X_s,\chi_s,\hat\alpha_s) \mathrm{d}s} ~ \Uc^i_A (\xi^i) ~ \bigg| ~ \Fc_t  \right], ~\P\text{\rm--a.s.}
\]
Moreover, $\forall \alpha^i \in\Ac^i(\alpha^{\star,-i})$ the process $R^{\alpha^i}$ is a $\P^{\alpha^i \otimes_i \alpha^{\star,-i}}-$supermartingale and $R^{\alpha^{\star,i}}$ is a $\P^{\alpha^\star}-$martingale. From the multiplicative decomposition of negative martingales (see for instance \cite[Equation (8)]{yoeurp1976decomposition}) and the martingale representation theorem, there exists predictable processes $Z^i \in \H^{2,1\times dN}_\text{loc}(X)$ and and $\tilde H^i=(\tilde H^{i,1},\dots,\tilde H^{i,N})$ with each $\tilde H^{i,\ell}\in \L^{1}_{\text{loc}}(\mu_{X^\ell})$ such that
\[
R^{\alpha^{\star,i}}_t = V_0^i(\alpha^{\star,-i},\xi^i,\chi^i) \Ec\bigg( -R_A^i \int_0^t Z^i_s \mathrm{d}X^{c,\alpha^\star}_t  -R_A^i  \int_{\R^d\setminus\{0\}} \tilde H_t^i(x) (\vec{\mu}_X (\textrm{d}t,\textrm{d} x) - \vec{\nu}^{\alpha^\star}_t(\textrm{d}x) \textrm{d}t\bigg).
\]
By applying It\^o's formula to $R^{\alpha^{\star,i}}$ we obtain $\P-\text{a.s.}$
\begin{align*}
\mathrm{d} V_t^i(\alpha^{\star,-i},\xi^i,\chi^i) = & ~ V_t^i(\alpha^{\star,-i},\xi^i,\chi^i) \bigg( \rho^i_t(X_t,\chi_t,\alpha^\star_t)  \mathrm{d}t  - R_A^i  Z_t^i  \mathrm{d}X^{c,\alpha^\star}_t -R_A^i  \int_{\R^d\setminus\{0\}} \tilde H_t^i(x) (\vec{\mu}_X (\textrm{d}t,\textrm{d} x) - \vec{\nu}^{\alpha^\star}_t(\textrm{d}x) \textrm{d}t\bigg),
\end{align*}
Write now $Y_t^i:= -\frac{1}{R_A^i}\log(-V_t^i(\alpha^{\star,-i},\xi^i,\chi^i) )$, $H_t^{i,\ell}(x):=\frac{1}{R_A^i}\log(1-R_A^i \tilde H_t^{i,\ell}(x))$ and define $Y:=(Y^1,\dots,Y^N)^\top$, $Z:=((Z^1)^\top,\dots,(Z^N)^\top)^\top$, $H:=((H^1)^\top,\dots,(H^N)^\top)^\top$. We have $Y^i_T = \xi^i$ and by It\^o's formula
\[
\mathrm{d}Y^i_t = -g^i_t(X_t,Z_t,H_t,\chi_t,\alpha^\star_t) \mathrm{d}t + Z^i_t \mathrm{d}X^c_t - \int_{\R^d\setminus\{0\}} H_t^{i}(x)\vec{\mu}_X(\textrm{d}t,\textrm{d} x)  , \quad \P-\text{a.s.}
\]
Next, note that for any $\alpha^i \in\Ac^i(\alpha^{\star,-i})$ it holds $R_t^{\alpha^i}  =    R_t^{\alpha^{\star,i}} e^{-\int_0^t ( \rho_u^i(X_u,\chi_u,\hat\alpha_u) - \rho_u^i(X_u,\chi_u,\alpha_u^\star)) \mathrm{d}u }$. Then we have
\begin{align*}
 \textrm{d} R_s^{\alpha^i}  = &  -R_s^{\alpha^{\star,i}} e^{-\int_0^s ( \rho_u^i(X_u,\chi_u,\hat\alpha_u) - \rho_u^i(X_u,\chi_u,\alpha_u^\star)) \mathrm{d}u } \big( \rho_s^i(X_s,\chi_s,\hat\alpha_s) - \rho_s^i(X_s,\chi_s,\alpha_s^\star)\big) \mathrm{d}s \\
 & + e^{-\int_0^s ( \rho_u^i(X_u,\chi_u,\hat\alpha_u) - \rho_u^i(X_u,\chi_u,\alpha_u^\star)) \mathrm{d}u } \big(  e^{- \int_0^s \rho_u^i(X_u,\chi_u,\alpha^\star_u) \mathrm{d}u} \mathrm{d}V_s^i(\alpha^{\star,-i},\xi^i,\chi^i) -  R_s^{\alpha^{\star,i}}\rho_s^i(X_s,\chi_s,\alpha^\star_s) \mathrm{d}s \big)  \\
=  & -  R_s^{\alpha^{\star,i}} e^{-\int_0^s ( \rho_u^i(X_u,\chi_u,\hat\alpha_u) - \rho_u^i(X_u,\chi_u,\alpha_u^\star)) \mathrm{d}u } \big( \rho_s^i(X_s,\chi_s,\hat\alpha_s) - \rho_s^i(X_s,\chi_s,\alpha_s^\star)\big) \mathrm{d}s \\
 & -   R_s^{\alpha^{\star,i}} e^{-\int_0^s ( \rho_u^i(X_u,\chi_u,\hat\alpha_u) - \rho_u^i(X_u,\chi_u,\alpha_u^\star)) \mathrm{d}u } \bigg(  R_A^i Z^i_s \mathrm{d}X^{c,\alpha^\star}_s + R_A^i\int_{\R^d\setminus\{0\}} \tilde H^i_s(x)   (\vec{\mu}_X (\textrm{d}s,\textrm{d} x) - \vec{\nu}^{\alpha^\star}_s(\textrm{d}x) \textrm{d}s \bigg) \\
=  & -    R_s^{\alpha^{\star,i}} e^{-\int_0^s ( \rho_u^i(X_u,\chi_u,\alpha_u) - \rho_u^i(X_u,\chi_u,\alpha_u^\star)) \mathrm{d}u }  R_A^i\big( g^i_s(X_s,Z_s,\chi_s,\hat\alpha_s)- g^i_s(X_s,Z_s,\chi_s,\alpha^\star_s) \big) \mathrm{d}s \\
   & -  R_s^{\alpha^{\star,i}} e^{-\int_0^s ( \rho_u^i(X_u,\chi_u,\alpha_u) - \rho_u^i(X_u,\chi_u,\alpha_u^\star)) \mathrm{d}u } \bigg( R_A^i  Z_s^i \Sigma_s(X_s) \mathrm{d}W^{\hat\alpha}_s + R_A^i\int_{\R^d\setminus\{0\}} \tilde H^i_s(x)(\vec{\mu}_X (\textrm{d}t,\textrm{d} x) - \vec{\nu}^{\hat\alpha}_t(\textrm{d}x) \textrm{d}t\bigg)
\end{align*}
Since $R_t^{\alpha^i}$ is a (negative) $\P^{\alpha^i \otimes_i \alpha^{\star,-i}}$--supermartingale, we have that $\alpha^\star$ satisfies \eqref{eq:a-star-cert-equiv} and $(Y,Z,H)$ is a solution to BSDE \eqref{eq:bsde-agent-cert-equiv}.

\medskip
To conclude, we prove that $(Z,H)\in\hat\Vc^\chi$. Note that $Y^{0,Z,H,\chi} = Y - Y_0$, with $Y_0\in\R^N$ and $\Uc_A^i(Y_t^i)=V_t^i(\alpha^{\star,-i},\xi^i,\chi^i)$. The required integrability follows from \eqref{eq:V-tau-in-Lq-cert-equiv} by taking $q_{\alpha^i} = p_i$.

\medskip
$(ii)$ Let $(Y,Z,H)$ be a solution to \eqref{eq:bsde-agent-cert-equiv}, with $(Z,H)\in\hat \Vc^\chi$, and let $\alpha^\star$ be a joint action satisfying \eqref{eq:a-star-cert-equiv}. Then $\alpha^\star\in\Ac$. Fix $i\in I$ and define for every $\alpha^i \in\Ac^i(\alpha^{\star,-i})$  the process 
\[
U_t^{\alpha^i}:= e^{-\int_0^t \rho^i_s(X_s,\chi_s,\hat\alpha_s) \textrm{d}s}\Uc_A^i(Y^i_t), \quad t\in[0,T],
\] 
which is of class (D) under $\P^{\alpha^i \otimes_i \alpha^{\star,-i}}$ since $(Z,H)\in\hat \Vc^\chi$. By It\^o's formula, we have
\begin{align*}
\frac{\textrm{d} U_t^{\alpha^i}}{U_t^{\alpha^i}} & = -R_A^i  \textrm{d}Y^i_t -  \rho^i_t(X_t,\chi_t,\hat\alpha_t) \textrm{d}t  + \frac{1}{2} (R_A^i)^2  || Z_t^{i,:}\Sigma_t(x)||^2 \textrm{d}t \\
& =  \bigg(  \rho^i_t(X_t,\chi_t,\alpha_t^\star) -  \rho^i_t(X_t,\chi_t,\hat\alpha_t) +R_A^i  Z_t^{i,:}  \Sigma_t(X_t) b_t(X_t,\alpha_t^\star) + \frac{1}{R_A^i} \sum_{\ell=1}^N  \int_{\R^d\setminus\{0\}}\big(1-e^{R_A^i h_t^{i,\ell}(x)}\big) \nu_t^{\ell,\alpha^\star}(\textrm{d}x) \bigg) \textrm{d}t  \\
& ~~~ - R_A^i Z_t^{i,:} \textrm{d}X^c_t + \frac{1}{R_A^i} \sum_{\ell=1}^N  \int_{\R^d\setminus\{0\}}\big(1-e^{R_A^i h_t^{i,\ell}(x)}\big) \mu_{X^\ell}(\textrm{d}x,\textrm{d}t) \\
& =  R_A^i \big( g^i(X_t,Z_t,\chi_t,\alpha_t^\star) - g^i(X_t,Z_t,\chi_t,\hat\alpha_t) \big) \textrm{d}t - R_A^i Z_t^{i,:}  \textrm{d}X_t^{c,\hat\alpha} \\
& ~~~ - \frac{1}{R_A^i} \sum_{\ell=1}^N  \int_{\R^d\setminus\{0\}}\big(1-e^{R_A^i h_t^{i,\ell}(x)}\big)\big(\mu_{X^\ell}(\textrm{d}x,\textrm{d}t) - \nu_t^{\ell,\hat\alpha}(\textrm{d}x) \textrm{d}t \big)
\end{align*}

It follows then that the process $U^{\alpha^i}$ is a $\P^{\alpha^i \otimes_i \alpha^{\star,-i}}$--local supermartingale of class (D) and hence a $\P^{\alpha^i \otimes_i \alpha^{\star,-i}}$--supermartingale. By the same argument, the process $U^{\alpha^{\star,i}}=\Ec\big(-R_A^i \int_0^\cdot Z^{i,:}_s \textrm{d}X_s^{c,\alpha^\star}\big) $ is a $\P^{\alpha^{\star}}$--martingale, so then
\[
 U_0^i(\alpha^{i},\alpha^{\star,-i},\xi^i,\chi^i) = \mathbb{E}^{\P^{\alpha^i \otimes_i \alpha^{\star,-i}}} \left[ U_T^{\alpha^i} \right] \leq U_0^{\alpha^i} =\Uc_A^i(Y_0^i) = U_0^{\alpha^{\star,i}} = \mathbb{E}^{\P^{\alpha^{\star}}}\left[ U_T^{\alpha^{\star,i}} \right] =  U_0^i(\alpha^{\star,i},\alpha^{\star,-i},\xi^i,\chi^i),
\]
which means that $\alpha^\star\in\text{NE}(\xi,\chi)$ and $V_0^i(\alpha^{\star,-i},\xi^i,\chi^i)=\Uc_A^i(Y_0^i)$.
\end{proof}

\bigskip
\begin{proof}[\bf Proof of Corollary \ref{cor-optimal-response-cert-equiv} ]
From Proposition \ref{prop-agent-bsde-cert-equiv}, by noting that $(Y^{y,Z,H,\chi},Z,H)$ is a solution to BSDE \eqref{eq:bsde-agent-cert-equiv}, we have that $a^{\star,Z,H,\chi}$ is a Nash equilibrium. Moreover, from part $(ii)$ of its proof we obtain the equality $V_0^i(\alpha^{\star,-i},\xi^i,\chi^i)=\Uc_A^i(Y_0^{y,Z,H,\chi,i})=\Uc_A^i(y^i)$.
\end{proof}

\section{Proofs for the principal's problem}  \label{app:proofs-principal}

\begin{proof}[\bf Proof of Proposition \ref{prop:unique-viscosity} ]
It is straightforward that assumptions $(2.1)-(2.5)$ in \cite{pham1998optimal} are satisfied. Then, the function $V$ is a viscosity solution of \eqref{eq:HJB-equation} by \cite[Theorem 3.1]{pham1998optimal} and has the mentioned continuity by \cite[Proposition 3.3]{pham1998optimal}. The uniqueness follows from the comparison result \cite[Theorem 4.1]{pham1998optimal}.
\end{proof}

\medskip
\begin{proof}[\bf Proof of Proposition \ref{prop:fsbde-viscosity}]
Note that $\psi(t,x,0)$ has linear growth because Assumption \ref{ass:pham-jumps} implies that $\phi(t,x,z,h,k)$ has linear growth, uniformly in $(z,h,k)$. We have that Assumptions (A.2i)-(A.2iV) and the ones in Section 1 in \cite{barles1997backward} are satisfied. Therefore, by \cite[Theorem 3.4]{barles1997backward} we have that $u$ is a viscosity solution to the I-PDE. From \cite[Proposition 2.5]{barles1997backward} we have that $u$ has polinomial growth. Under the additional assumptions on $L$ and $\psi$, it also follows from \cite[Proposition 2.5]{barles1997backward} that $u$ is uniformly continuous and bounded.
\end{proof}

\medskip
\begin{proof}[\bf Proof of Proposition \ref{prop-fujii-jumps} ]
We have that \cite[Assumptions 6.1-6.4]{fujii2018quadratic} are satisfied. Notice that \cite[Assumption 5.1]{fujii2018quadratic} holds trivially in our setting because the generator $\psi$ depends only on $(t,x,\zeta)$. Then, by \cite[Corollary 6.1]{fujii2018quadratic} we have that $u$ is continuous in $(t,x)$ and continuously differentiable with respect to $x$.
\end{proof}

\medskip
\begin{proof}[\bf Proof of Theorem \ref{thr-smoothness-and-existence} ]
$(i)$ Start by noting that the map $L$ is uniformly continuous from from Assumption \ref{ass:fujii-jumps} $(iv)-(v)$ and bounded from Assumption \ref{ass:fujii-jumps} $(iii)$. Next, the map $\psi(t,\cdot,\zeta)$ is uniformly continuous from Assumption \ref{ass:last-one} and bounded from Assumption \ref{ass:fujii-jumps} $(ii)$. From Proposition \ref{prop:fsbde-viscosity}, we know that $u$ is a viscosity solution to the I-PDE \eqref{eq:HJB-equation}. Moreover, by the second part of the result we have that $u$ is uniformly continuous and bounded which implies, due to Proposition \ref{prop:unique-viscosity}, that $u=V$. Finally, from Proposition \ref{prop-fujii-jumps} we have that $V$ is continuously differentiable with respect to $x$.

\medskip
$(ii)$ By following the same idea of 
part $(i)$, we can prove that if the principal chooses controls $\theta=(\chi,Z,H)$ in problem \eqref{eq:reformulated-principal-cert-equiv-2} then she obtains utility $\tilde Y_0^{t,x,\theta}$, where  $(\tilde X^{t,x},\tilde Y^{t,x,\theta}, \tilde Z^{t,x,\theta}, \tilde H^{t,x,\theta})$ is the adapted solution to the FBSDE system
\[
\tilde X_s^{t,x} = x +\int_t^s \Sigma(\tilde X_r^{t,x}) \mathrm{d}W_r + \sum_{i=1}^N \int_t^s \int_{E\setminus\{0\}} I_{i}[ \beta^i(\tilde X_s^{t,x},e)] \mu_{J_i}(\mathrm{d}r,\mathrm{d}e),
\]
\[
\tilde Y_s^{t,x,\theta} = L(\tilde X_T^{t,x}) + \int_s^T \tilde\psi(r,\theta_r,\tilde X_r^{t,x},\tilde Z_r^{t,x,\theta}) \mathrm{d}r - \int_s^T \tilde Z_r^{t,x,\theta} \mathrm{d}W_r - \sum_{i=1}^N \int_s^T \int_{E\setminus\{0\}} \tilde H_r^{t,x,\theta}(e) \big(\mu_{J_i}(\mathrm{d}r,\mathrm{d}e) -F^i(\mathrm{d}e)\mathrm{d}r\big),
\]
with the generator
\[
\tilde\psi(t,k,z,h,x,\zeta) =  \phi(t,x,z,h,k) + \zeta \cdot b_t(x,\hat a^\star(t,x,z,h,k)).
\]
As the generator $\tilde\psi$ is Lipschitz in $\zeta$, as implied by Assumption \ref{ass:bbp-jumps} (iii), the comparison principle for BSDEs (see for instance \cite[Theorem 2.5]{royer2006backward}) gives us that $\tilde Y_0^{t,x,\theta}\leq \tilde Y_0^{t,x}$ for every $\theta=(\chi,Z,H)$ with $\chi\in\hat\Xi_2$ and $(Z,H)\in\hat\Vc^\chi$. The optimality of the control $\theta^\star=(\chi^\star,Z^\star,H^\star)$ follows by noting that $Y^{t,x}=Y^{t,x,\theta^\star}$.

\smallskip
To conclude, the form of the optimal contracts is a direct consequence of the reformulation of the principal's problem and the optimality of $(\chi^\star,Z^\star,H^\star)$.

\end{proof}

\end{document}